\numberwithin{equation}{section} \DeclareMathSizes{2}{10}{12}{13}
\newtheorem{thm}{Proposition}[section]
\newtheorem{Thm}[thm]{Theorem}
\newtheorem{lem}[thm]{Lemma}
\newtheorem{defn}[thm]{Definition}
\title{
Quasimodular Hecke algebras and Hopf actions}
\author{Abhishek Banerjee
}
\date{ }
\begin{document} 

\maketitle

\medskip
\centerline{\emph{Dept. of Mathematics, Indian Institute of Science, Bangalore, Karnataka - 560012, India.}}
\centerline{\emph{Email: abhishekbanerjee1313@gmail.com}}

\begin{abstract}
Let $\Gamma=\Gamma(N)$ be a principal congruence subgroup of $SL_2(\mathbb Z)$. In this paper, we extend
the theory of modular Hecke algebras due to Connes and Moscovici to define the algebra $\mathcal Q(\Gamma)$ of quasimodular Hecke
operators of level $\Gamma$. Then, $\mathcal Q(\Gamma)$ carries an action of ``the Hopf algebra $\mathcal H_1$ of codimension
$1$ foliations'' that also acts on the modular Hecke algebra $\mathcal A(\Gamma)$ of Connes and Moscovici. However, 
in the case of quasimodular forms, we have several new operators  acting on the quasimodular Hecke algebra
$\mathcal Q(\Gamma)$. Further, for each $\sigma\in SL_2(\mathbb Z)$, we
introduce the collection $\mathcal Q_\sigma(\Gamma)$ of quasimodular Hecke operators of level $\Gamma$ twisted
by $\sigma$. Then, $\mathcal Q_\sigma(\Gamma)$ is a right $\mathcal Q(\Gamma)$-module and is endowed with 
a pairing $(\_\_,\_\_):\mathcal Q_\sigma(\Gamma)\otimes \mathcal Q_\sigma(\Gamma)\longrightarrow \mathcal Q_\sigma(\Gamma)$. 
We show that there is a ``Hopf action'' of a certain Hopf algebra $\mathfrak{h}_1$ on the pairing on $\mathcal Q_\sigma(\Gamma)$. Finally,
for any $\sigma\in SL_2(\mathbb Z)$, we consider operators
acting between the levels of the graded module   $\mathbb Q_\sigma(\Gamma)=\underset{m\in \mathbb Z}{\oplus}\mathcal Q_{\sigma(m)}(\Gamma)$, where
$\sigma(m)=\begin{pmatrix} 1 & m \\ 0 & 1 \\ \end{pmatrix}\cdot \sigma$ for any $m\in \mathbb Z$. The pairing on 
$\mathcal Q_\sigma(\Gamma)$ can be extended to a graded pairing on $\mathbb Q_\sigma(\Gamma)$ and we show
that there is a Hopf action of a larger Hopf algebra $\mathfrak{h}_{\mathbb Z}\supseteq \mathfrak{h}_1$ on the pairing
on $\mathbb Q_\sigma(\Gamma)$. 
\end{abstract}

\medskip
\noindent {\bf Keywords: Modular Hecke algebras, Hopf actions}

\medskip

\medskip

\section{Introduction}

\medskip

\medskip
Let $N\geq 1$ be an integer and let  $\Gamma=\Gamma(N)$ be a principal congruence subgroup of
$SL_2(\mathbb Z)$.  In \cite{CM1}, \cite{CM2}, Connes and Moscovici have introduced 
the ``modular Hecke algebra'' $\mathcal A(\Gamma)$ that combines the pointwise product on
modular forms with the action of Hecke operators. Further, Connes and Moscovici have shown that
the modular Hecke algebra $\mathcal A(\Gamma)$ carries an action of ``the Hopf algebra
$\mathcal H_1$ of codimension $1$ foliations''. The Hopf algebra $\mathcal H_1$ is part of a larger
family of Hopf algebras $\{\mathcal H_n|n\geq 1\}$ defined in \cite{CM0}. The objective of this paper
is to introduce and study quasimodular Hecke algebras $\mathcal Q(\Gamma)$ that similarly combine
the pointwise product on quasimodular forms with the action of Hecke operators. We will see that the quasimodular
Hecke algebra $\mathcal Q(\Gamma)$ carries several other operators in addition to an action of 
$\mathcal H_1$. Further, we will also
study the collection $\mathcal Q_\sigma(\Gamma)$ of quasimodular Hecke operators twisted
by some $\sigma\in SL_2(\mathbb Z)$. The latter is a generalization of our theory of twisted modular Hecke
operators introduced in \cite{AB1}.  

\medskip
We now describe the paper in detail. In Section 2, we briefly recall the notion of modular Hecke algebras
of Connes and Moscovici \cite{CM1}, \cite{CM2}. We let $\mathcal{QM}$ be the ``quasimodular tower'', i.e., 
$\mathcal{QM}$ is the colimit
over all $N$ of the spaces $\mathcal{QM}(\Gamma(N))$ of quasimodular forms of level $\Gamma(N)$ (see \eqref{2.8}). We define a quasimodular Hecke operator
of level $\Gamma$ to be a function of finite support from $\Gamma\backslash GL_2^+(\mathbb Q)$
to the quasimodular tower $\mathcal{QM}$ satisfying a certain covariance condition (see Definition \ref{maindef}). 
We then show that   the collection $\mathcal Q(\Gamma)$ of quasimodular Hecke operators
of level $\Gamma$ carries an algebra structure $(\mathcal Q(\Gamma),\ast)$ by considering a convolution product over cosets of $\Gamma$ in $GL_2^+(\mathbb Q)$. 
Further, the modular Hecke algebra of Connes and Moscovici embeds naturally as a subalgebra of $
\mathcal Q(\Gamma)$. We also show that the quasimodular Hecke operators of level $\Gamma$ act on 
quasimodular forms of level $\Gamma$, i.e., $\mathcal{QM}(\Gamma)$ is a left $\mathcal Q(\Gamma)$-module. 
In this section, we will also define a second algebra structure $(\mathcal Q(\Gamma),\ast^r)$ on $\mathcal Q(\Gamma)$ by considering the 
convolution product over cosets of $\Gamma$ in $SL_2(\mathbb Z)$. When we consider $\mathcal Q(\Gamma)$
as an algebra equipped with this latter product $\ast^r$, it will be denoted by $\mathcal Q^r(\Gamma)=(\mathcal Q(\Gamma),\ast^r)$.

\medskip
In Section 3, we define Lie algebra and  Hopf algebra actions on $\mathcal Q(\Gamma)$. Given a quasimodular form
$f\in \mathcal{QM}(\Gamma)$ of level $\Gamma$, it is well known that we can write $f$ as a sum
\begin{equation}\label{1.0ner}
f=\sum_{i=0}^sa_i(f)\cdot G_2^i
\end{equation}  where the coefficients $a_i(f)$ are modular forms of level $\Gamma$ and $G_2$ is the classical
Eisenstein series of weight $2$. Therefore, we can consider two different sets of operators on the quasimodular tower 
$\mathcal{QM}$: those which
act on the powers of $G_2$ appearing in the expression for $f$ and those which act on the modular coefficients $a_i(f)$. The collection
of operators acting on the modular coefficients $a_i(f)$ are studied in Section 3.2. These induce on $\mathcal Q(\Gamma)$ analogues of operators acting on
the modular Hecke algebra $\mathcal A(\Gamma)$ of Connes and Moscovici and we show that
$\mathcal Q(\Gamma)$ carries an action of the same Hopf algebra $\mathcal H_1$ of codimension $1$ foliations
that acts on $\mathcal A(\Gamma)$.  On the other hand, by considering operators on $\mathcal{QM}$ that act on the powers
of $G_2$ appearing in \eqref{1.0ner},  we are able to define  additional operators $D$, $\{T_k^l\}_{k\geq 1,l\geq 0}$ and 
$\{\phi^{(m)}\}_{m\geq 1}$ on $\mathcal Q(\Gamma)$ (see Section 3.1). Further, we show that these operators satisfy the following commutator
relations: 
\begin{equation}\label{1.2ner}
\begin{array}{c}
[T_k^l,T_{k'}^{l'}]=(k'-k)T_{k+k'-2}^{l+l'}\\
\mbox{$[D,\phi^{(m)}]=0 \qquad [T_k^l,\phi^{(m)}]=0 \qquad [\phi^{(m)},
\phi^{(m')}]=0$} \\
\mbox{$[T_k^l,D]=\frac{5}{24}(k-1)T^{l+1}_{k-1}-\frac{1}{2}(k-3)T^l_{k+1}$}\\
\end{array}
\end{equation} We then consider the  Lie algebra $\mathcal L$ generated by the symbols 
$D$, $\{T_k^l\}_{k\geq 1,l\geq 0}$,  
$\{\phi^{(m)}\}_{m\geq 1}$ satisfying the  commutator relations in \eqref{1.2ner}. Then, there
is a Lie action of $\mathcal L$ on $\mathcal Q(\Gamma)$. Finally, let $\mathcal H$ be the Hopf algebra given by the
universal enveloping algebra $\mathcal U(\mathcal L)$ of $\mathcal L$.  Then, we show that $\mathcal H$ has a Hopf action with
respect to the product $\ast^r$ on $\mathcal Q(\Gamma)$  and this action captures the operators $D$, $\{T_k^l\}_{k\geq 1,l\geq 0}$ and 
$\{\phi^{(m)}\}_{m\geq 1}$ on $\mathcal Q(\Gamma)$. In other words, $\mathcal H$ acts on $\mathcal Q(\Gamma)$
such that:
\begin{equation}\label{1.3ner}
h(F^1\ast^r F^2)=\sum h_{(1)}(F^1)\ast^r h_{(2)}(F^2) \qquad\forall \textrm{ } h\in \mathcal H, \textrm{ }F^1,F^2\in \mathcal Q(\Gamma)
\end{equation} where the coproduct $\Delta:\mathcal H\longrightarrow \mathcal H\otimes \mathcal H$ is given by
$\Delta(h)=\sum h_{(1)}\otimes h_{(2)}$ for any $h\in \mathcal H$. 

\medskip
In Section 4, we develop the theory of twisted quasimodular Hecke operators. For any 
$\sigma\in SL_2(\mathbb Z)$, we define in Section 4.1 the collection $\mathcal Q_\sigma(\Gamma)$
of quasimodular Hecke operators of level $\Gamma$ twisted by $\sigma$. When $\sigma=1$, this reduces
to the original definition of $\mathcal Q(\Gamma)$. In general, $\mathcal Q_\sigma(\Gamma)$ is not
an algebra but we show that $\mathcal Q_\sigma(\Gamma)$ carries a pairing:
\begin{equation}\label{1.7m}
(\_\_,\_\_):\mathcal Q_\sigma(\Gamma)\otimes \mathcal Q_\sigma(\Gamma)\longrightarrow \mathcal Q_\sigma(\Gamma)
\end{equation} Further, we show that $\mathcal Q_\sigma(\Gamma)$ may be equipped with the structure
of a right $\mathcal Q(\Gamma)$-module. We can also extend the action of the Hopf algebra 
$\mathcal H_1$ of codimension $1$ foliations to $\mathcal Q_\sigma(\Gamma)$. In fact, we show that
$\mathcal H_1$ has a ``Hopf action'' on the right $\mathcal Q(\Gamma)$ module 
$\mathcal Q_\sigma(\Gamma)$, i.e.,
\begin{equation}
h(F^1\ast F^2)=\sum h_{(1)}(F^1)\ast h_{(2)}(F^2) \qquad\forall \textrm{ } h\in \mathcal H_1, \textrm{ }F^1\in \mathcal Q_\sigma(\Gamma), \textrm{ }
F^2\in \mathcal Q(\Gamma)
\end{equation} where the coproduct $\Delta:\mathcal H_1\longrightarrow \mathcal H_1\otimes \mathcal H_1$ is given by
$\Delta(h)=\sum h_{(1)}\otimes h_{(2)}$ for any $h\in \mathcal H_1$. We recall from \cite{CM1} that 
$\mathcal H_1$ is equal as an algebra to the universal enveloping algebra of the Lie algebra $\mathcal L_1$ with
generators $X$, $Y$, $\{\delta_n\}_{n\geq 1}$ satisfying the following relations:
\begin{equation}
[Y,X]=X \quad [X,\delta_n]=\delta_{n+1}\quad [Y,\delta_n]=n\delta_n\quad [\delta_k,\delta_l]=0\qquad\forall\textrm{ }k,l,n\geq 1
\end{equation} Then, we can consider the smaller Lie algebra $\mathfrak{l}_1\subseteq \mathcal L_1$ with two generators
$X$, $Y$ satisfying $[Y,X]=X$. If we let $\mathfrak{h}_1$ be the Hopf algebra that is  the universal enveloping algebra 
of $\mathfrak{l}_1$, we show that the pairing in \eqref{1.7m} on $\mathcal Q_\sigma(\Gamma)$ carries a
``Hopf action'' of $\mathfrak{h}_1$. In other words, we have:
\begin{equation}\label{1.10m}
h(F^1,F^2)=\sum (h_{(1)}(F^1),h_{(2)}(F^2))\qquad\forall \textrm{ } h\in \mathfrak{h}_1, \textrm{ }F^1,F^2\in \mathcal Q_\sigma(\Gamma)
\end{equation} where the coproduct $\Delta:\mathfrak{h}_1\longrightarrow \mathfrak{h}_1\otimes \mathfrak{h}_1$ is given by
$\Delta(h)=\sum h_{(1)}\otimes h_{(2)}$ for any $h\in \mathfrak{h}_1$. In Section 4.2, we consider operators
between the modules $\mathcal Q_\sigma(\Gamma)$ as $\sigma$ varies over $SL_2(\mathbb Z)$. More precisely, for any
$\tau, \sigma\in SL_2(\mathbb Z)$, we define a morphism:
\begin{equation}
X_\tau:\mathcal Q_\sigma(\Gamma)\longrightarrow \mathcal Q_{\tau\sigma}(\Gamma)
\end{equation} In particular, this gives us operators acting between the levels of the graded module
\begin{equation}
\mathbb Q_\sigma(\Gamma)=\bigoplus_{m\in \mathbb Z}\mathcal Q_{\sigma(m)}(\Gamma)
\end{equation} where for any $\sigma \in SL_2(\mathbb Z)$, we set $\sigma(m)
=\begin{pmatrix} 1 & m \\ 0 & 1 \\ \end{pmatrix}\cdot \sigma$. Further, we generalize the  pairing 
on $\mathcal Q_\sigma(\Gamma)$ in \eqref{1.7m} to a pairing:
\begin{equation}\label{1.13m}
(\_\_,\_\_):\mathcal Q_{\tau_1\sigma}(\Gamma)\otimes \mathcal Q_{\tau_2\sigma}(\Gamma)\longrightarrow \mathcal Q_{\tau_1\tau_2\sigma}(\Gamma)
\end{equation} where $\tau_1$, $\tau_2$  are commuting matrices in $SL_2(\mathbb Z)$. In particular, \eqref{1.13m} gives us a pairing
$\mathcal Q_{\sigma(m)}(\Gamma)\otimes \mathcal Q_{\sigma(n)}(\Gamma)\longrightarrow \mathcal Q_{\sigma(m+n)}(\Gamma)$, $\forall$
$m$, $n\in \mathbb Z$ and hence a  pairing on the tower $\mathbb Q_\sigma(\Gamma)$. Finally, we consider 
the Lie algebra $\mathfrak{l}_{\mathbb Z}\supseteq \mathfrak{l}_1$ with generators $\{Z,X_n|n\in \mathbb Z\}$ satisfying the following
commutator relations:
\begin{equation}
[Z,X_n]=(n+1)X_n\qquad [X_n,X_{n'}]=0\qquad \forall\textrm{ }n,n'\in \mathbb Z
\end{equation} Then, if we let $\mathfrak{h}_{\mathbb Z}$ be the Hopf algebra that is the universal enveloping
algebra of $\mathfrak{l}_{\mathbb Z}$, we show that $\mathfrak{h}_{\mathbb Z}$ has a Hopf action on the 
pairing on $\mathbb Q_\sigma(\Gamma)$. In other words, for any $F^1$, $F^2\in \mathbb Q_{\sigma}(\Gamma)$, we have
\begin{equation}\label{last4.48}
h(F^1,F^2)=\sum (h_{(1)}(F^1),h_{(2)}(F^2))\qquad \forall\textrm{ }h\in \mathfrak{h}_{\mathbb Z}
\end{equation} where the coproduct $\Delta:\mathfrak h_{\mathbb Z}
\longrightarrow \mathfrak h_{\mathbb Z}\otimes \mathfrak h_{\mathbb Z}$ is defined
by setting $\Delta(h):=\sum h_{(1)}\otimes h_{(2)}$ for each $h\in \mathfrak h_{\mathbb Z}$.

\medskip

\medskip
\section{The Quasimodular Hecke algebra}

\medskip

\medskip
We begin this section by briefly recalling the notion of quasimodular forms. The notion of quasimodular forms is due to Kaneko and Zagier \cite{KZ}. The theory has been further developed in Zagier \cite{Zag}. For an introduction to the basic theory of quasimodular forms, we refer the reader to the exposition of Royer \cite{Royer}. 

\medskip
Throughout, let $\mathbb H\subseteq \mathbb C$ be the upper half plane. Then, there is a well known
action of $SL_2(\mathbb Z)$ on $\mathbb H$:
\begin{equation}
z\mapsto \frac{az+b}{cz+d}\qquad \forall \textrm{ }z\in \mathbb H, \begin{pmatrix} a & b \\ 
c & d \\ \end{pmatrix} \in SL_2(\mathbb Z)
\end{equation} For any $N\geq 1$, we denote by $\Gamma(N)$ the following principal congruence
subgroup of $SL_2(\mathbb Z)$:
\begin{equation}
\Gamma(N):=\left\{ \left. \begin{pmatrix} a & b \\ 
c & d \\ \end{pmatrix}\in SL_2(\mathbb Z) \right |  \begin{pmatrix} a & b \\ 
c & d \\ \end{pmatrix} \equiv \begin{pmatrix} 1 & 0 \\ 
0 & 1 \\ \end{pmatrix} \mbox{($mod$ $N$)} \right\}
\end{equation} In particular, $\Gamma(1)=SL_2(\mathbb Z)$. We are now ready to define
quasimodular forms. 

\medskip
\begin{defn}\label{Def2.1} Let $f:\mathbb H\longrightarrow \mathbb C$ be a holomorphic function and let
$N\geq 1$, $k$, $s\geq 0$ be integers. Then, the function $f$ is a quasimodular form of 
level $N$, weight $k$ and depth $s$ if there exist holomorphic functions $f_0$, $f_1$, ..., $f_s:
\mathbb H\longrightarrow \mathbb C$ with $f_s\ne 0$ such that:
\begin{equation}\label{2.3}
(cz+d)^{-k}f\left(\frac{az+b}{cz+d}\right)=\sum_{j=0}^s f_j(z)\left(\frac{c}{cz+d}\right)^j
\end{equation} for any matrix $\begin{pmatrix} a & b \\ 
c & d \\ \end{pmatrix} \in \Gamma(N)$. The collection of quasimodular forms of level $N$, weight $k$ and depth $s$ will be denoted by $\mathcal{QM}_k^s(\Gamma(N))$. By convention, we let the zero function $0\in \mathcal{QM}_k^0(\Gamma(N))$ for every $k\geq 0$, $N\geq 1$. 

\end{defn} 

\medskip
More generally, for any holomorphic function $f:\mathbb H\longrightarrow \mathbb C$ and any matrix
$\alpha = \begin{pmatrix} a & b \\ 
c & d \\ \end{pmatrix}\in GL_2^+(\mathbb Q)$, we define:
\begin{equation}
(f\vert_k\alpha)(z):=(cz+d)^{-k}f\left(\frac{az+b}{cz+d}\right) \qquad \forall \textrm{ }k\geq 0
\end{equation} Then, we can say that $f$ is quasimodular of level $N$, weight $k$ and depth $s$ 
if there exist holomorphic functions $f_0$, $f_1$, ..., $f_s:
\mathbb H\longrightarrow \mathbb C$ with $f_s\ne 0$ such that:
\begin{equation}
(f\vert_k\gamma)(z) = \sum_{j=0}^s f_j(z)\left(\frac{c}{cz+d}\right)^j \qquad\forall\textrm{ } 
\gamma = \begin{pmatrix} a & b \\ 
c & d \\ \end{pmatrix} \in \Gamma(N)
\end{equation} When the integer $k$ is clear from context, we write $f\vert_k\alpha$ simply
as $f\vert \alpha$ for any $\alpha\in GL_2^+(\mathbb Q)$. Also, it is clear that we have a product:
\begin{equation}
\mathcal{QM}^s_k(\Gamma(N))\otimes \mathcal{QM}^t_l(\Gamma(N))\longrightarrow \mathcal{QM}^{s+t}_{k+l}
(\Gamma(N))
\end{equation} on quasi-modular forms. For any $N\geq 1$, we now define:
\begin{equation}
\mathcal{QM}(\Gamma(N)):=\bigoplus_{s=0}^\infty\bigoplus_{k=0}^\infty 
\mathcal{QM}_k^s(\Gamma(N))
\end{equation} We now consider the direct limit:
\begin{equation}\label{2.8}
\mathcal{QM}:=\underset{N\geq 1}{\varinjlim} \textrm{ }\mathcal{QM}(\Gamma(N))
\end{equation} which we will refer to as the quasimodular tower.  Additionally, for any $k\geq 0$ and  $N\geq 1$, we let $\mathcal M_k(\Gamma(N))$ denote the collection of usual modular forms
of weight $k$ and level $N$. Then, we can define the modular tower $\mathcal M$:
\begin{equation}\label{mtower}
\mathcal{M}:=\underset{N\geq 1}{\varinjlim} \textrm{ }\mathcal{M}(\Gamma(N))\qquad 
\mathcal{M}(\Gamma(N)):=\bigoplus_{k=0}^\infty 
\mathcal{M}_k(\Gamma(N))
\end{equation} We now recall the modular Hecke algebra of Connes and Moscovici \cite{CM1}. 

\medskip
\begin{defn}\label{CMdef} (see \cite[$\S$ 1]{CM1}) Let $\Gamma=\Gamma(N)$ be a principal congruence
subgroup of $SL_2(\mathbb Z)$. A modular Hecke operator of level $\Gamma$ is a function
of finite support 
\begin{equation}
F:\Gamma\backslash GL_2^+(\mathbb Q)\longrightarrow \mathcal{M} \qquad \Gamma\alpha
\mapsto F_\alpha
\end{equation} such that for any $\gamma\in \Gamma$, we have:
\begin{equation}\label{tt2.11}
F_{\alpha\gamma}=F_\alpha|\gamma 
\end{equation} The collection of all modular Hecke operators of level $\Gamma$ will be denoted
by $\mathcal A(\Gamma)$. 
\end{defn}

\medskip 
Our first aim is to define a quasimodular Hecke algebra $\mathcal Q(\Gamma)$ analogous to the modular Hecke algebra $\mathcal A(\Gamma)$ of Connes and Moscovici. For this, we recall the structure
theorem for quasimodular forms, proved by Kaneko and Zagier \cite{KZ}. 

\medskip
\begin{Thm}\label{Th2.1} (see \cite[$\S$ 1, Proposition 1.]{KZ}) Let $\Gamma=\Gamma(N)$ be a principal congruence
subgroup of $SL_2(\mathbb Z)$. For any even number $K\geq 2$, let $G_K$ denote the classical  Eisenstein series of weight $K$: 
\begin{equation}\label{2.12ov}
G_K(z):=-\frac{B_K}{2K}+\sum_{n=1}^\infty\left(\sum_{d|n}d^{K-1}\right)e^{2\pi inz}
\end{equation} where $B_K$ is the $K$-th Bernoulli number and $z\in \mathbb H$. 
Then, every quasimodular form in $\mathcal{QM}(\Gamma)$ can be written uniquely  as a polynomial in $G_2$ with coefficients in 
$\mathcal M(\Gamma)$.  More precisely, for any quasimodular form $f\in \mathcal{QM}^s_k(\Gamma)$, there exist functions $a_0(f)$, $a_1(f)$, ..., $a_s(f)$ such that:
\begin{equation}
f=\underset{i=0}{\overset{s}{\sum}} a_i(f)G_2^i 
\end{equation} where $a_i(f)\in \mathcal M_{k-2i}(\Gamma)$ is a modular form of weight $k-2i$ and level $\Gamma$ for each
$0\leq i\leq s$. 
\end{Thm}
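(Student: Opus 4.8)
The plan is to prove existence of the expansion by induction on the depth $s$, and to deduce uniqueness from the transcendence of $G_2$ over $\mathcal M(\Gamma)$. The base case $s=0$ is immediate from Definition \ref{Def2.1}: a depth-$0$ quasimodular form satisfies $(f|_k\gamma)(z)=f_0(z)$ for every $\gamma\in\Gamma$, and taking $\gamma=1$ gives $f_0=f$, so $f\in\mathcal M_k(\Gamma)$ is already modular. The one analytic input I would record at the outset is the transformation law of $G_2$ itself: a direct computation with \eqref{2.12ov} for $K=2$ shows that $G_2\in\mathcal{QM}_2^1(\Gamma)$ with
\[
(G_2|_2\gamma)(z)=G_2(z)+\lambda\cdot\frac{c}{cz+d},\qquad \gamma=\begin{pmatrix} a & b \\ c & d \end{pmatrix}\in\Gamma,
\]
for a fixed nonzero constant $\lambda$. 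Raising to the $s$-th power gives $(G_2^s|_{2s}\gamma)=\bigl(G_2+\lambda\tfrac{c}{cz+d}\bigr)^s$, so $G_2^s\in\mathcal{QM}^s_{2s}(\Gamma)$ and the coefficient of $\bigl(\tfrac{c}{cz+d}\bigr)^s$ in its expansion is the nonzero constant $\lambda^s$.

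The crux, and the step I expect to be the main obstacle, is to show that the coefficient functions $f_0,\dots,f_s$ attached to an arbitrary $f\in\mathcal{QM}^s_k(\Gamma)$ are themselves quasimodular, and in particular that the leading coefficient $f_s$ lies in $\mathcal M_{k-2s}(\Gamma)$. I would extract this from the cocycle identity $(f|_k\gamma)|_k\gamma'=f|_k(\gamma\gamma')$. Writing $j(\gamma,z):=cz+d$, $X_\gamma(z):=\tfrac{c}{cz+d}$ and differentiating the logarithm of $j(\gamma\gamma',z)=j(\gamma,\gamma' z)\,j(\gamma',z)$ yields
\[
X_{\gamma\gamma'}(z)=X_{\gamma'}(z)+j(\gamma',z)^{-2}\,X_\gamma(\gamma' z).
\]
Substituting the defining expansion \eqref{2.3} into both sides of the cocycle identity and treating the quantity $t:=X_\gamma(\gamma' z)$ as a free variable (it takes infinitely many values as $\gamma$ ranges over $\Gamma$, so a polynomial identity in $t$ follows), I would compare coefficients of $t^i$. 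This produces, for each $i$, the transformation
\[
(f_i|_{k-2i}\gamma')(z)=\sum_{l\geq 0}\binom{l+i}{i}f_{l+i}(z)\,X_{\gamma'}(z)^l,
\]
exhibiting $f_i$ as quasimodular of weight $k-2i$ and depth $s-i$. The case $i=s$ collapses to $(f_s|_{k-2s}\gamma')=f_s$, so $f_s\in\mathcal M_{k-2s}(\Gamma)$, as required.

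With $f_s$ identified as a modular form, the inductive reduction is routine. The product $f_s G_2^s$ lies in $\mathcal{QM}^s_k(\Gamma)$, of weight $(k-2s)+2s=k$, and by the first paragraph its depth-$s$ coefficient equals $\lambda^s f_s$. Hence $f-\lambda^{-s}f_s\,G_2^s$ has its top-order term in $X_\gamma$ cancelled and lies in $\mathcal{QM}_k^{s'}(\Gamma)$ with $s'\leq s-1$. By the induction hypothesis this difference equals $\sum_{i=0}^{s-1}a_i(f)G_2^i$ with $a_i(f)\in\mathcal M_{k-2i}(\Gamma)$; setting $a_s(f):=\lambda^{-s}f_s\in\mathcal M_{k-2s}(\Gamma)$ completes the expansion $f=\sum_{i=0}^s a_i(f)G_2^i$ with coefficients of the stated weights.

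Finally, uniqueness follows once I show $G_2$ is transcendental over $\mathcal M(\Gamma)$. Suppose $\sum_{i=0}^m a_i G_2^i=0$ with $a_i\in\mathcal M(\Gamma)$ and $a_m\neq 0$. Applying $|\,\gamma$ and reading off the coefficient of $X_\gamma(z)^m$, only the summand $a_m G_2^m$ contributes in this top depth (each $a_i G_2^i$ with $i<m$ reaches depth at most $i$), with coefficient $a_m\lambda^m$. Since the left-hand side vanishes identically, its depth-$m$ coefficient is zero, forcing $a_m\lambda^m=0$ and hence $a_m=0$, a contradiction. Therefore any two representations of $f$ as a polynomial in $G_2$ with modular coefficients coincide, which establishes the uniqueness of the $a_i(f)$.
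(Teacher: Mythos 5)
The paper offers no proof of this statement at all---it is quoted directly from Kaneko--Zagier \cite{KZ}---so there is nothing internal to compare against; your argument is a correct, self-contained proof and is essentially the standard one in the literature (cf.\ Zagier's and Royer's expositions): the cocycle identity for $j(\gamma,z)$ forces the coefficient functions $f_i$ to transform as quasimodular forms of depth $s-i$ with $f_s$ $\Gamma$-invariant of weight $k-2s$, and induction on the depth together with the nonvanishing of the constant $\lambda$ in the $G_2$ anomaly (here $\lambda=-\tfrac{1}{4\pi i}$ for the normalization \eqref{2.12ov}) yields both existence and uniqueness. The one caveat is inherited from Definition \ref{Def2.1} rather than from your argument: since the paper imposes no growth condition at the cusps on quasimodular forms, your cocycle computation only shows that $f_s$ is $\Gamma$-invariant of weight $k-2s$, and the stronger conclusion $f_s\in\mathcal M_{k-2s}(\Gamma)$ (holomorphy at the cusps) requires the moderate-growth hypothesis that is part of the definition in \cite{KZ}. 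With that hypothesis understood, your proof is complete.
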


\medskip
We now consider a quasimodular form  $f\in \mathcal{QM}$. For sake of definiteness, we may assume
that $f\in \mathcal{QM}^s_k(\Gamma(N))$, i.e. $f$ is a quasimodular form
of level $N$, weight $k$ and depth $s$. We now define an operation on $\mathcal{QM}$ by setting:
\begin{equation}\label{t2.11}
f||\alpha = \underset{i=0}{\overset{s}{\sum}} (a_i(f)|_{k-2i}\alpha ) G_2^i  \qquad \forall \textrm{ }\alpha\in GL_2^+(\mathbb Q)
\end{equation} where $\{a_i(f)\in \mathcal M_{k-2i}(\Gamma(N))\}_{0\leq i\leq s}$ is the collection of modular forms determining
$f=\sum_{i=0}^s a_i(f)G_2^i$ as in Theorem \ref{Th2.1}. We know  that for any $\alpha
\in GL_2^+(\mathbb Q)$, each $(a_i(f)|_{k-2i}\alpha)$ is an element of the modular tower $\mathcal M$. This shows that $f||\alpha = \underset{i=0}{\overset{s}{\sum}} (a_i(f)|_{k-2i}\alpha ) G_2^i\in \mathcal{QM}$. However, we note that for arbitrary $\alpha\in GL_2^+(\mathbb Q)$ and
$a_i(f)\in \mathcal M_{k-2i}(\Gamma(N))$, it is not necessary that $(a_i(f)|_{k-2i}\alpha )
\in \mathcal M_{k-2i}(\Gamma(N))$. In other words, the operation defined in \eqref{t2.11} on the quasimodular tower $\mathcal{QM}$ does not descend to an endomorphism on each 
$\mathcal{QM}^s_k(\Gamma(N))$.  From the expression in \eqref{t2.11}, it is also clear that:
\begin{equation}\label{tv2.11}
(f\cdot g)||\alpha = (f||\alpha)\cdot (g||\alpha) \qquad f||(\alpha\cdot \beta)=(f||\alpha)||\beta \qquad
\forall\textrm{ }f,g\in \mathcal{QM}, \textrm{ }\alpha,\beta\in GL_2^+(\mathbb Q)
\end{equation} We are now ready to define
the quasimodular Hecke operators. 

\medskip

\begin{defn}\label{maindef} Let $\Gamma=\Gamma(N)$ be a principal congruence subgroup. A quasimodular Hecke 
operator of level $\Gamma$  is a function of finite support:
\begin{equation}
F:\Gamma\backslash GL_2^+(\mathbb Q)\longrightarrow \mathcal{QM} \qquad \Gamma\alpha
\mapsto F_\alpha
\end{equation} such that for any $\gamma\in \Gamma$, we have:
\begin{equation}\label{tt2.16}
F_{\alpha\gamma}=F_\alpha||\gamma 
\end{equation} The collection of all quasimodular Hecke operators of level $\Gamma$ will be denoted
by $\mathcal Q(\Gamma)$. 
\end{defn}

\medskip
We will now introduce the product structure on $\mathcal Q(\Gamma)$. In fact, we will introduce
two separate product structures $(\mathcal Q(\Gamma),\ast)$ and $(\mathcal Q(\Gamma),\ast^r)$ on
$\mathcal Q(\Gamma)$. 

\medskip
\begin{thm}(a) Let $\Gamma=\Gamma(N)$ be a principal congruence subgroup and let $\mathcal Q(\Gamma)$
be the collection of quasimodular Hecke operators of level $\Gamma$. Then, the product defined
by:
\begin{equation}\label{2.11}
(F\ast G)_\alpha:=\sum_{\beta\in \Gamma\backslash GL_2^+(\mathbb Q)}F_{\beta}\cdot (G_{\alpha\beta^{-1}}||\beta)\qquad \forall\textrm{ }\alpha\in GL_2^+(\mathbb Q)
\end{equation} for all $F$, $G\in \mathcal Q(\Gamma)$ makes $\mathcal Q(\Gamma)$ into an associative algebra. 

\medskip
(b) Let $\Gamma=\Gamma(N)$ be a principal congruence subgroup and let $\mathcal Q(\Gamma)$
be the collection of quasimodular Hecke operators of level $\Gamma$. Then, the product defined
by:
\begin{equation}\label{2.11vv}
(F\ast^r G)_\alpha:=\sum_{\beta\in \Gamma\backslash SL_2(\mathbb Z)}F_{\beta}\cdot (G_{\alpha\beta^{-1}}||\beta)\qquad \forall\textrm{ }\alpha\in GL_2^+(\mathbb Q)
\end{equation} for all $F$, $G\in \mathcal Q(\Gamma)$ makes $\mathcal Q(\Gamma)$ into an associative algebra which we denote by $\mathcal Q^r(\Gamma)$. 
\end{thm}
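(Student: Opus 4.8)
The plan is to handle both products at once, since \eqref{2.11vv} differs from \eqref{2.11} only in that the summation index $\beta$ runs over the finite set $\Gamma\backslash SL_2(\mathbb Z)$ rather than over $\Gamma\backslash GL_2^+(\mathbb Q)$; I will carry out the argument for $\ast$ and then record the cosmetic changes for $\ast^r$. The only tools I expect to need are the two identities in \eqref{tv2.11} — that $f\mapsto f||\alpha$ is a right action of $GL_2^+(\mathbb Q)$ and is multiplicative — together with the covariance condition \eqref{tt2.16}. First I would check that, for fixed $\alpha$, the defining sum is finite (immediate from finite support) and independent of the chosen coset representatives. For the latter it suffices to show that the summand $S(\beta):=F_\beta\cdot(G_{\alpha\beta^{-1}}||\beta)$ satisfies $S(\gamma\beta)=S(\beta)$ for $\gamma\in\Gamma$: since $F_{\gamma\beta}=F_\beta$, one compares the second factors using the right-action identity $G_{\alpha\beta^{-1}\gamma^{-1}}||(\gamma\beta)=(G_{\alpha\beta^{-1}\gamma^{-1}}||\gamma)||\beta$ and then \eqref{tt2.16} in the form $G_{\alpha\beta^{-1}\gamma^{-1}}||\gamma=G_{\alpha\beta^{-1}}$.

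Next I would verify that $F\ast G$ again lies in $\mathcal Q(\Gamma)$. Its values lie in $\mathcal{QM}$ because $||\beta$ preserves $\mathcal{QM}$ and $\mathcal{QM}$ is closed under multiplication, and its support is finite because $(F\ast G)_\alpha\neq 0$ confines $\Gamma\alpha$ to the finite set of cosets of the form $\Gamma g\beta$ with $\Gamma g\in\mathrm{supp}(G)$ and $\Gamma\beta\in\mathrm{supp}(F)$. The substantive point is the covariance \eqref{tt2.16} for $F\ast G$: substituting $\beta\mapsto\beta\gamma$ in $(F\ast G)_{\alpha\gamma}$ and applying, in turn, covariance to the $F$-factor ($F_{\beta\gamma}=F_\beta||\gamma$), the right-action identity to the $G$-factor, and finally multiplicativity $(f||\gamma)\cdot(g||\gamma)=(f\cdot g)||\gamma$, collapses the whole sum to $(F\ast G)_\alpha||\gamma$.

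The associativity of $\ast$ is the main computation, and I expect it to be the principal obstacle. I would expand both $((F\ast G)\ast H)_\alpha$ and $(F\ast(G\ast H))_\alpha$ into double sums, using multiplicativity of $||$ to distribute $||\beta'$ across the product appearing in $(G\ast H)$ and the right-action identity to merge nested occurrences of $||$. The first expands to $\sum_{\beta,\beta'}F_{\beta'}\cdot(G_{\beta(\beta')^{-1}}||\beta')\cdot(H_{\alpha\beta^{-1}}||\beta)$, and the change of variables $\delta=\beta(\beta')^{-1}$ — a bijection of $\Gamma\backslash GL_2^+(\mathbb Q)$ for each fixed $\beta'$ — carries this term-by-term into the expansion of $(F\ast(G\ast H))_\alpha$, namely $\sum_{\beta',\delta}F_{\beta'}\cdot(G_\delta||\beta')\cdot(H_{\alpha(\delta\beta')^{-1}}||(\delta\beta'))$. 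The delicate part is confirming that these summands depend only on the cosets of $\beta$ and $\beta'$, so that the re-indexing is legitimate; this again uses both identities in \eqref{tv2.11} and the covariance \eqref{tt2.16}, exactly as in the first step.

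For part (b) the same manipulations go through verbatim; the only thing to check is that each substitution ($\gamma\beta$, $\beta\gamma$, and $\delta=\beta(\beta')^{-1}$) keeps the index inside $SL_2(\mathbb Z)$, which holds since $\Gamma\subseteq SL_2(\mathbb Z)$ and $SL_2(\mathbb Z)$ is a group, so that right multiplication by an element of $SL_2(\mathbb Z)$ permutes $\Gamma\backslash SL_2(\mathbb Z)$. Finally, to justify calling these structures \emph{algebras} I would exhibit the two-sided unit $\mathbf{1}$ supported on the coset $\Gamma$ with value the constant $1$, for which $\mathbf{1}\ast F=F=F\ast\mathbf{1}$ follows by a one-line substitution; the same operator is a unit for $\ast^r$.
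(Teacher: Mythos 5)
Your proof is correct and follows essentially the same route as the paper: associativity is established by expanding both triple products into sums over factorizations of $\alpha$ and matching them term by term via a change of variables (the paper phrases this as a sum over triples $(\alpha_1,\alpha_2,\alpha_3)$ with $\alpha_3\alpha_2\alpha_1=\alpha$ modulo an equivalence relation, which is your re-indexing $\delta=\beta(\beta')^{-1}$ in disguise). You additionally spell out the well-definedness of the convolution, the covariance of $F\ast G$, and the unit element, which the paper leaves implicit; these checks are all correct.
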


\begin{proof} (a) We need to check that the product in \eqref{2.11} is associative. First of all, we note
that the expression in \eqref{2.11} can be rewritten as:
\begin{equation}\label{2.12}
(F\ast G)_\alpha = \sum_{\alpha_2\alpha_1=\alpha} F_{\alpha_1}\cdot G_{\alpha_2}||\alpha_1
\qquad\forall\textrm{ }\alpha\in GL_2^+(\mathbb Q)
\end{equation} where the sum in \eqref{2.12} is taken over all pairs $(\alpha_1,\alpha_2)$ with $\alpha_2\alpha_1=\alpha$ modulo the following equivalence relation:
\begin{equation}
(\alpha_1,\alpha_2)\sim (\gamma\alpha_1,\alpha_2\gamma^{-1}) \qquad\forall\textrm{ }
\gamma\in \Gamma
\end{equation} Hence, for $F$, $G$, $H\in \mathcal Q(\Gamma)$, we can write:
\begin{equation}\label{2.14}
\begin{array}{ll}
(F\ast (G\ast H))_\alpha & =\sum_{\alpha'_2\alpha_1=\alpha}F_{\alpha_1}\cdot (G\ast H)_{\alpha'_2}||\alpha_1 \\
& =\sum_{\alpha'_2\alpha_1=\alpha}F_{\alpha_1}\cdot (\sum_{\alpha_3\alpha_2=\alpha'_2}
G_{\alpha_2}\cdot H_{\alpha_3}||\alpha_2)||\alpha_1 \\
& =\sum_{\alpha_3\alpha_2\alpha_1=\alpha}F_{\alpha_1}\cdot (G_{\alpha_2}||\alpha_1)\cdot 
(H_{\alpha_3}||\alpha_2\alpha_1)\\ 
\end{array}
\end{equation} where the sum in \eqref{2.14} is taken over all triples $(\alpha_1,\alpha_2,\alpha_3)$ with $\alpha_3\alpha_2\alpha_1=\alpha$ modulo the following equivalence relation:
\begin{equation}\label{2.15}
(\alpha_1,\alpha_2,\alpha_3)\sim (\gamma\alpha_1,\gamma'\alpha_2\gamma^{-1},\alpha_3\gamma'^{-1})\qquad \forall\textrm{ }\gamma,\gamma'\in \Gamma
\end{equation} On the other hand, we have
\begin{equation}\label{2.16}
\begin{array}{ll}
((F\ast G)\ast H)_\alpha & =\sum_{\alpha_3\alpha''_2=\alpha}(F\ast G)_{\alpha''_2}\cdot 
H_{\alpha_3}||\alpha''_2 \\
& =\sum_{\alpha_3\alpha''_2=\alpha} (\sum_{\alpha_2\alpha_1=\alpha''_2} F_{\alpha_1}
\cdot G_{\alpha_2}||\alpha_1)\cdot H_{\alpha_3}||\alpha''_2 \\
&=\sum_{\alpha_3\alpha_2\alpha_1=\alpha}F_{\alpha_1}\cdot (G_{\alpha_2}||\alpha_1)
\cdot (H_{\alpha_3}||\alpha_2\alpha_1) \\
\end{array}
\end{equation} where the sum in \eqref{2.16} is taken over all triples $(\alpha_1,\alpha_2,\alpha_3)$ with $\alpha_3\alpha_2\alpha_1=\alpha$ modulo the  equivalence relation in \eqref{2.15}. From 
\eqref{2.14} and \eqref{2.16} the result follows. 
 
\end{proof}

\medskip
We know that modular forms are quasimodular forms of depth $0$, i.e., for any $k\geq 0$, $N\geq 1$,
we have $\mathcal M_k(\Gamma(N))=\mathcal{QM}_k^0(\Gamma(N))$. It follows that the modular
tower $\mathcal M$ defined in \eqref{mtower} embeds into the quasimodular tower $\mathcal{QM}$
defined in \eqref{2.8}. We are now ready to show that the modular Hecke algebra 
$\mathcal A(\Gamma)$ of Connes and
Moscovici embeds into the quasimodular Hecke algebra $\mathcal Q(\Gamma)$ for 
any congruence subgroup $\Gamma=\Gamma(N)$. 

\medskip
\begin{thm} Let $\Gamma=\Gamma(N)$ be a principal congruence subgroup of $SL_2(\mathbb Z)$. 
Let $\mathcal A(\Gamma)$ be the modular Hecke algebra of level $\Gamma$ as defined in 
Definition \ref{CMdef} and let $\mathcal Q(\Gamma)$ be the quasimodular Hecke algebra
of level $\Gamma$ as defined in Definition \ref{maindef}. 
Then, there is a natural embedding of algebras $\mathcal A(\Gamma)\hookrightarrow 
\mathcal Q(\Gamma)$. 
\end{thm}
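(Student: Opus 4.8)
The plan is to show that the natural inclusion of the modular tower $\mathcal M$ into the quasimodular tower $\mathcal{QM}$ promotes every modular Hecke operator to a quasimodular Hecke operator, and that this assignment is an injective algebra homomorphism. First I would record the inclusion at the level of towers: since $\mathcal M_k(\Gamma(N)) = \mathcal{QM}_k^0(\Gamma(N))$ for all $k \geq 0$, $N \geq 1$, the modular tower $\mathcal M$ of \eqref{mtower} sits inside $\mathcal{QM}$ of \eqref{2.8}. For $F \in \mathcal A(\Gamma)$ I then define $\iota(F) \colon \Gamma\backslash GL_2^+(\mathbb Q) \to \mathcal{QM}$ by $\iota(F)_\alpha := F_\alpha$, now regarded as a quasimodular form of depth $0$. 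This clearly has finite support.

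The crux of the argument is the compatibility of the two slash operations on modular forms. For a modular form $a \in \mathcal M_k(\Gamma(N)) = \mathcal{QM}_k^0(\Gamma(N))$, the expansion guaranteed by Theorem \ref{Th2.1} is the trivial one $a = a \cdot G_2^0$, so that $a_0(a) = a$ and $a_i(a) = 0$ for $i \geq 1$. Plugging this into the definition \eqref{t2.11} of $||$ gives $a || \alpha = a_0(a)|_k\alpha = a|_k\alpha = a|\alpha$ for every $\alpha \in GL_2^+(\mathbb Q)$; note that although $a|\alpha$ need not have the same level as $a$, it is still an element of $\mathcal M \subseteq \mathcal{QM}$. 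Thus on the subtower $\mathcal M$ the operation $||$ restricts to the ordinary slash $|$.

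With this in hand I would verify that $\iota(F)$ satisfies the covariance condition \eqref{tt2.16}. For $\gamma \in \Gamma$ we have $\iota(F)_{\alpha\gamma} = F_{\alpha\gamma} = F_\alpha|\gamma = F_\alpha||\gamma = \iota(F)_\alpha||\gamma$, where the middle equality is \eqref{tt2.11} and the third uses the compatibility just established. Hence $\iota(F) \in \mathcal Q(\Gamma)$, and $\iota$ is injective because the inclusion $\mathcal M \hookrightarrow \mathcal{QM}$ is. It remains to check that $\iota$ is multiplicative. Recalling that the Connes--Moscovici product on $\mathcal A(\Gamma)$ is the modular analogue of \eqref{2.11}, namely $(F\ast G)_\alpha = \sum_{\beta} F_\beta\cdot(G_{\alpha\beta^{-1}}|\beta)$, I would compute the $\mathcal Q(\Gamma)$-product of $\iota(F)$ and $\iota(G)$ using \eqref{2.11}; since every value $F_\beta$, $G_{\alpha\beta^{-1}}$ is modular, the compatibility $|| = |$ turns each summand $F_\beta\cdot(G_{\alpha\beta^{-1}}||\beta)$ into $F_\beta\cdot(G_{\alpha\beta^{-1}}|\beta)$, and products of modular forms are again modular. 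Therefore $(\iota(F)\ast\iota(G))_\alpha = (F\ast G)_\alpha = \iota(F\ast G)_\alpha$ as elements of $\mathcal M \subseteq \mathcal{QM}$.

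There is no serious obstacle here: the entire content is the identity $a||\alpha = a|\alpha$ for modular $a$, which in turn rests only on the uniqueness clause of the structure theorem (Theorem \ref{Th2.1}) forcing the $G_2$-expansion of a depth-$0$ form to be trivial. The only point demanding a moment's care is that the slash of a modular form by a general $\alpha \in GL_2^+(\mathbb Q)$ can leave the fixed level $\Gamma(N)$, but since it remains in the tower $\mathcal M$ this causes no difficulty for either the covariance check or the multiplicativity check.
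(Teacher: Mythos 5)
Your proposal is correct and follows essentially the same route as the paper: both reduce everything to the identity $f||\alpha = a_0(f)|_k\alpha = f|\alpha$ for depth-$0$ (i.e.\ modular) forms, then compare the covariance conditions \eqref{tt2.11} and \eqref{tt2.16} and the two convolution products \eqref{kittyS} and \eqref{2.11}. The only difference is presentational --- you make explicit the appeal to the uniqueness clause of Theorem \ref{Th2.1} and the remark about leaving the level $\Gamma(N)$, which the paper leaves implicit.
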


\begin{proof} For any $\alpha\in GL_2^+(\mathbb Q)$ and any $f\in \mathcal{QM}^s_k(\Gamma)$, we
consider the operation $f\mapsto f||\alpha$ as defined in \eqref{t2.11}:
\begin{equation}\label{2.24}
f||\alpha=\underset{i=0}{\overset{s}{\sum}}(a_i(f)|_{k-2i}\alpha)G_2^i \in \mathcal{QM}
\end{equation} In particular, if $f\in \mathcal M_k(\Gamma)=\mathcal{QM}^0_k(\Gamma)$ is a modular form, it follows
from \eqref{2.24} that:
\begin{equation}\label{2.25}
f||\alpha = a_0(f)|_k\alpha  = f|_k\alpha = f|\alpha \in \mathcal M
\end{equation} Hence, using the embedding of $\mathcal M$ in $\mathcal{QM}$, it follows from 
\eqref{tt2.11} in the definition of $\mathcal A(\Gamma)$ and from \eqref{tt2.16} in the definition
of $\mathcal Q(\Gamma)$ that we have an embedding $\mathcal A(\Gamma)\hookrightarrow 
\mathcal Q(\Gamma)$ of modules.  Further, we recall from \cite[$\S$ 1]{CM1} that the product on 
$\mathcal A(\Gamma)$ is given by:
\begin{equation}\label{kittyS}
(F\ast G)_\alpha:=\sum_{\beta\in \Gamma\backslash GL_2^+(\mathbb Q)}F_{\beta}\cdot (G_{\alpha\beta^{-1}}|\beta)\qquad \forall\textrm{ }\alpha\in GL_2^+(\mathbb Q), \textrm{ }F,G\in \mathcal A(\Gamma)
\end{equation} Comparing \eqref{kittyS} with the product on $\mathcal Q(\Gamma)$ described in \eqref{2.11}
and using \eqref{2.25} it follows that $\mathcal A(\Gamma)\hookrightarrow 
\mathcal Q(\Gamma)$ is an embedding of algebras. 

\end{proof}

\medskip
We end this section by describing the action of the algebra $\mathcal Q(\Gamma)$ on
 $\mathcal{QM}(\Gamma)$. 

\begin{thm} Let $\Gamma=\Gamma(N)$ be a principal congruence subgroup and let $\mathcal Q(\Gamma)$
be the algebra of quasimodular Hecke operators of level $\Gamma$.  Then, for any element
$f\in \mathcal{QM}(\Gamma)$ the action of $\mathcal Q(\Gamma)$ defined by:
\begin{equation}\label{2.28}
F\ast f:=\sum_{\beta\in \Gamma\backslash GL_2^+(\mathbb Q)}F_\beta \cdot f||\beta\qquad
\forall\textrm{ }F\in \mathcal Q(\Gamma)
\end{equation} makes $\mathcal{QM}(\Gamma)$ into a left module over $\mathcal Q(\Gamma)$. 
\end{thm}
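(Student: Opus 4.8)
The plan is to verify, in order, that formula \eqref{2.28} is independent of the choice of coset representatives, that $F\ast f$ lands in $\mathcal{QM}(\Gamma)$ rather than merely in the tower $\mathcal{QM}$, and finally that $(F\ast G)\ast f=F\ast(G\ast f)$. The auxiliary fact underpinning the first two points is that $f||\gamma=f$ for every $f\in\mathcal{QM}(\Gamma)$ and every $\gamma\in\Gamma$. Indeed, writing $f=\sum_i a_i(f)G_2^i$ as in Theorem \ref{Th2.1} with $a_i(f)\in\mathcal M_{k-2i}(\Gamma)$, the definition of $||$ gives $f||\gamma=\sum_i(a_i(f)|\gamma)G_2^i$, and each modular coefficient satisfies $a_i(f)|\gamma=a_i(f)$ since $\gamma\in\Gamma$; hence $f||\gamma=f$. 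Because $F$ is a function on the left cosets $\Gamma\backslash GL_2^+(\mathbb Q)$ we have $F_{\gamma\beta}=F_\beta$, and together with $f||(\gamma\beta)=(f||\gamma)||\beta$ from \eqref{tv2.11} this yields $F_{\gamma\beta}\cdot f||(\gamma\beta)=F_\beta\cdot f||\beta$. Thus each summand of \eqref{2.28} depends only on $\Gamma\beta$, and since $F$ has finite support the sum is finite, so $F\ast f$ is a well-defined element of $\mathcal{QM}$.

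Next I would show $F\ast f\in\mathcal{QM}(\Gamma)$, which is the genuinely new point and the one requiring the most care. Using the multiplicativity and associativity of $||$ from \eqref{tv2.11} together with the covariance condition \eqref{tt2.16} rewritten as $F_\beta||\gamma=F_{\beta\gamma}$, I compute, for any $\gamma\in\Gamma$, that $(F\ast f)||\gamma=\sum_\beta(F_\beta||\gamma)\cdot(f||\beta\gamma)=\sum_\beta F_{\beta\gamma}\cdot f||(\beta\gamma)$. Since right multiplication by the fixed element $\gamma$ permutes the cosets in $\Gamma\backslash GL_2^+(\mathbb Q)$, reindexing $\beta\mapsto\beta\gamma$ gives $(F\ast f)||\gamma=F\ast f$. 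Now $F\ast f$ lies in $\mathcal{QM}$, so it has a unique expression $\sum_i b_i G_2^i$ with $b_i$ in the modular tower; the identity $(F\ast f)||\gamma=F\ast f$ for all $\gamma\in\Gamma$ together with the uniqueness clause of Theorem \ref{Th2.1} forces $b_i|\gamma=b_i$, i.e.\ each $b_i\in\mathcal M(\Gamma)$, whence $F\ast f\in\mathcal{QM}(\Gamma)$.

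For the module axiom I would expand $(F\ast G)\ast f$ via the rewriting $(F\ast G)_\beta=\sum_{\beta_2\beta_1=\beta}F_{\beta_1}\cdot(G_{\beta_2}||\beta_1)$ recorded in \eqref{2.12}, obtaining $(F\ast G)\ast f=\sum_{\beta_1,\beta_2}F_{\beta_1}\cdot(G_{\beta_2}||\beta_1)\cdot f||(\beta_2\beta_1)$. Substituting $f||(\beta_2\beta_1)=(f||\beta_2)||\beta_1$ and applying multiplicativity $(g\cdot h)||\beta_1=(g||\beta_1)\cdot(h||\beta_1)$ from \eqref{tv2.11} collapses the last two factors into $\big(G_{\beta_2}\cdot(f||\beta_2)\big)||\beta_1$, so that, using additivity of $||$, the expression becomes $\sum_{\beta_1}F_{\beta_1}\cdot\big((G\ast f)||\beta_1\big)=F\ast(G\ast f)$. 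The coset bookkeeping here is identical to that in the associativity proof for $\ast$: the combined double sum is indexed by pairs $(\beta_1,\beta_2)$ modulo $(\beta_1,\beta_2)\sim(\gamma\beta_1,\beta_2\gamma^{-1})$, which is exactly the indexing produced by nesting the two coset sums. One also checks that the operator supported on $\Gamma$ with value $1$ acts as the identity, so $\mathcal{QM}(\Gamma)$ becomes a left $\mathcal Q(\Gamma)$-module. The main obstacle is the middle step, namely confirming that $F\ast f$ descends from the tower to level $\Gamma$; this is where the covariance condition \eqref{tt2.16} and the uniqueness in the Kaneko–Zagier structure theorem are essential, whereas the associativity verification merely transcribes the bookkeeping already used for the product.
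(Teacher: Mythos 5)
Your proposal is correct and follows essentially the same route as the paper: the heart of both arguments is to show that the modular coefficients of $F\ast f$ in its $G_2$-expansion are $\Gamma$-invariant by reindexing the coset sum under right multiplication by $\gamma$ --- you phrase this as $(F\ast f)||\gamma=F\ast f$ combined with the uniqueness clause of Theorem \ref{Th2.1}, while the paper writes out the coefficients $A_{ir}(F,f)=\sum_j a_{\beta_j r}(F_{\beta_j})\cdot(a_i(f)|\beta_j)$ explicitly and checks $A_{ir}(F,f)|\gamma=A_{ir}(F,f)$ directly. Your additional verification of the module axiom $(F\ast G)\ast f=F\ast(G\ast f)$, which the paper leaves implicit, is also correct and mirrors the coset bookkeeping of the associativity proof for $\ast$.
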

\begin{proof} It is easy to check that the right hand side of \eqref{2.28} is independent of the choice
of coset representatives. Further, since $F\in \mathcal Q(\Gamma)$ is a function of finite support, we can choose finitely many coset representatives $\{\beta_1,\beta_2,...,\beta_n\}$ such that
\begin{equation}\label{tv2.29}
F\ast f=\sum_{j=1}^n F_{\beta_j}\cdot f||\beta_j
\end{equation} It suffices to consider the case $f\in \mathcal{QM}_k^s(\Gamma)$ for some
weight $k$ and depth $s$. Then, we can express $f$ as a sum: 
\begin{equation}
f=\sum_{i=0}^s a_i(f)G_2^i
\end{equation} where each $a_i(f)\in \mathcal M_{k-2i}(\Gamma)$. Similarly, for any $\beta\in GL_2^+(\mathbb Q)$, we can express $F_{\beta}$ as a finite sum:
\begin{equation}
F_{\beta}=\sum_{r=0}^{t_\beta}a_{\beta r}(F_{\beta})\cdot G_2^r 
\end{equation} with each $a_{\beta r}(F_{\beta})\in \mathcal M$.  In particular, we let $t=max\{t_{\beta_1},t_{\beta_2},...,t_{\beta_n}\}$ and we can now write:
\begin{equation}
F_{\beta_j}=\sum_{r=0}^ta_{\beta_j r}(F_{\beta_j})\cdot G_2^r
\end{equation}by adding appropriately many terms with zero coefficients in the expression for each
$F_{\beta_j}$.  Further, for any $\gamma\in \Gamma$, we know that $F_{\beta_j\gamma}
=F_{\beta_j}||\gamma=\sum_{r=0}^t(a_{\beta_jr}(F_{\beta_j})|\gamma)\cdot G_2^r$. In other words, we have, for each $j$:
\begin{equation}\label{2.33cv}
F_{\beta_j\gamma}=\sum_{r=0}^ta_{\beta_j\gamma r}(F_{\beta_j\gamma})\cdot G_2^r \qquad
a_{\beta_j\gamma r}(F_{\beta_j\gamma})=(a_{\beta_jr}(F_{\beta_j})|\gamma)
\end{equation} The sum in \eqref{tv2.29} can now be expressed as:
\begin{equation}
F\ast f:=\sum_{j=1}^nF_{\beta_j} \cdot f||\beta_j
=\sum_{i=0}^s\sum_{r=0}^{t}\sum_{j=1}^na_{\beta_jr}(F_{\beta_j})\cdot (a_i(f)|\beta_j)\cdot G_2^{r+i}\end{equation} For any $i$, $r$,  we now set:
\begin{equation}\label{2.35cv}
A_{ir}(F,f):=\sum_{j=1}^na_{\beta_jr}(F_{\beta_j})\cdot (a_i(f)|\beta_j)\end{equation} Again, it is easy to see that the sum $A_{ir}(F,f)$ in \eqref{2.35cv} does not depend on the choice of the coset representatives $\{\beta_1,\beta_2,...,\beta_n\}$. Then, for any $\gamma\in \Gamma$, we have:
\begin{equation}\label{2.36}
A_{ir}(F,f)|\gamma=\sum_{j=1}^n (a_{\beta_jr}(F_{\beta_j})|\gamma)\cdot (a_i(f)|\beta_j\gamma)
=\sum_{j=1}^n a_{\beta_j\gamma r}(F_{\beta_j\gamma}) \cdot (a_i(f)|\beta_j\gamma)=A_{ir}(F,f)
\end{equation} where the last equality in \eqref{2.36} follows from the fact that 
$\{\beta_1\gamma,\beta_2\gamma,...,\beta_n\gamma\}$ is another collection of  distinct cosets reprsentatives of 
$\Gamma$ in $GL_2^+(\mathbb Q)$. From \eqref{2.36}, we note that each $A_{ir}(F,f)\in \mathcal M(\Gamma)$. Then, the sum:
\begin{equation}
F\ast f=\sum_{i=0}^s\sum_{r=0}^tA_{ir}(F,f)\cdot G_2^{i+r}
\end{equation} is an element of $\mathcal{QM}(\Gamma)$. Hence, $\mathcal{QM}(\Gamma)$ is a left module over $\mathcal Q(\Gamma)$. 
\end{proof}

\medskip

\medskip

\section{The Lie algebra and Hopf algebra actions on $\bf \mathcal Q(\Gamma)$}

\medskip

\medskip
Let $\Gamma=\Gamma(N)$ be a principal congruence subgroup of $SL_2(\mathbb Z)$.  
In this section, we will describe two different sets of operators on the collection $\mathcal Q(\Gamma)$ of quasimodular Hecke operators
of level $\Gamma$. Given a quasimodular form $f\in \mathcal{QM}(\Gamma)$ of level $\Gamma$, we have mentioned
in the last section that $f$ can be expressed as a finite sum:
\begin{equation}\label{3.1ner}
f=\sum_{i=0}^sa_i(f)\cdot G_2^i
\end{equation} where $G_2$ is the classical Eisenstein series of weight $2$ and each $a_i(f)$ is a modular
form of level $\Gamma$. Then in Section 3.1, we consider operators on the quasimodular tower that
act on the powers of $G_2$ appearing in \eqref{3.1ner}. These induce operators $D$, $\{T_k^l\}_{k\geq 1,l\geq 0}$
on the collection $\mathcal Q(\Gamma)$ of quasimodular Hecke operators
of level $\Gamma$. In order to understand the action of these operators on products of elements
in $\mathcal Q(\Gamma)$, we also need to define extra operators $\{\phi^{(m)}\}_{m\geq 1}$. Finally, we show
that these operators may all be described in terms of a Hopf algebra $\mathcal H$ with a ``Hopf action''
on $\mathcal Q^r(\Gamma)$, i.e.,
\begin{equation}
h(F^1\ast^r F^2)=\sum h_{(1)}(F^1)\ast^r h_{(2)}(F^2) \qquad\forall \textrm{ } h\in \mathcal H, \textrm{ }F^1,F^2\in \mathcal Q^r(\Gamma)
\end{equation} where the coproduct $\Delta:\mathcal H\longrightarrow \mathcal H\otimes \mathcal H$ is given by
$\Delta(h)=\sum h_{(1)}\otimes h_{(2)}$ for any $h\in \mathcal H$. In Section 3.2, we consider operators on the quasimodular tower 
$\mathcal{QM}$ that act on the modular coefficients $a_i(f)$ appearing in \eqref{3.1ner}. These induce on $\mathcal Q(\Gamma)$
analogues of operators acting on the modular Hecke algebra $\mathcal A(\Gamma)$ of Connes and Moscovici \cite{CM1}. Then, we
show that $\mathcal Q(\Gamma)$ carries a Hopf action of the same Hopf algebra $\mathcal H_1$ of codimension
$1$ foliations that acts on $\mathcal A(\Gamma)$.

\medskip

\medskip
\subsection{The operators $\bf D$, $\bf \{T^l_k\}$ and $\bf \{\phi^{(m)}\}$ on $\mathcal Q(\Gamma)$}

\medskip

\medskip
 For any even number $K\geq 2$, let $G_K$ be the classical Eisenstein series of weight $K$ as in 
 \eqref{2.12ov}. Since $G_2$ is a quasimodular form, i.e., $G_2\in \mathcal{QM}$, its derivative $G_2'\in \mathcal{QM}$. Further, it is well known that:
\begin{equation}\label{3.1}
G_2'=\frac{5\pi i}{3}G_4 - 4\pi iG_2^2
\end{equation} where $G_4$ is the Eisenstein series of weight $4$ (which is a modular form). For our
purposes, it will be convenient to write:
\begin{equation}
G_2'=\sum_{j=0}^2g_jG_2^j
\end{equation} with each $g_j$ a modular form. From \eqref{3.1}, it follows that:
\begin{equation}\label{3.3}
g_0=\frac{5\pi i}{3}G_4 \qquad g_1=0 \qquad g_2=-4\pi i
\end{equation} We are now ready to define the operators $D$ and $\{W_k\}_{k\geq 1}$ on $\mathcal{QM}$. The first operator $D$ differentiates
the powers of $G_2$:   \begin{equation}\label{3.4}
\begin{array}{c}
D:\mathcal{QM}\longrightarrow \mathcal{QM} \\
\begin{array}{ll}
 f=\underset{i=0}{\overset{s}{\sum}}a_i(f)G_2^i \mapsto & -\frac{1}{8\pi i}\left( \underset{i=0}{\overset{s}{\sum}}ia_i(f)G_2^{i-1}\cdot G_2' \right)\\
 & = -\frac{1}{8\pi i}\underset{i=0}{\overset{s}{\sum}}\underset{j=0}{\overset{2}{\sum}} ia_i(f)g_jG_2^{i+j-1}
 \end{array}
\end{array}
\end{equation} The operators  $\{W_k\}_{k\geq 1}$ are ``weight operators'' and $W_k$ also steps up the power of $G_2$ by $k-2$. We set:
\begin{equation}\label{3.5x}
W_k:\mathcal{QM}\longrightarrow \mathcal{QM} \qquad 
 f=\underset{i=0}{\overset{s}{\sum}}a_i(f)G_2^i \mapsto  \underset{i=0}{\overset{s}{\sum}}ia_i(f)G_2^{i+k-2}
\end{equation} From the definitions in \eqref{3.4} and \eqref{3.5x}, we can easily check that $D$ and
$W_k$ are derivations on $\mathcal{QM}$. 
Finally, for any $\alpha\in GL_2^+(\mathbb Q)$ and any integer $m\geq 1$, we set 
\begin{equation}\label{nu}
\nu_\alpha^{(m)}=-\frac{5}{24}\left( G_4^m|\alpha - G_4^m\right)
\end{equation} 

\medskip
\begin{lem}\label{L3.1} (a) Let $f\in \mathcal{QM}$ be an element of the quasimodular tower and 
$\alpha\in GL_2^+(\mathbb Q)$. Then, the operator $D$ satisfies:
\begin{equation}\label{3.6}
D(f)||\alpha=D(f||\alpha) + \nu_\alpha^{(1)}\cdot (W_1(f)||\alpha)
\end{equation} where, using \eqref{nu}, we know that $\nu_\alpha^{(1)}$ is given by:
\begin{equation}\label{3.7}
\nu_\alpha^{(1)}:= -\frac{1}{8\pi i}(g_0|\alpha - g_0)=-\frac{5}{24}\left( G_4|\alpha - G_4\right)\qquad \forall\textrm{ }
\alpha\in GL_2^+(\mathbb Q)
\end{equation}

\medskip
(b) For $f\in\mathcal{QM}$ and $\alpha\in GL_2^+(\mathbb Q)$, each operator $W_k$, $k\geq 1$ satisfies:
\begin{equation}\label{3.8}
W_k(f)||\alpha=W_k(f||\alpha)
\end{equation}

\end{lem}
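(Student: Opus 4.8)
The plan is to prove (b) first, because the one-line computation it requires is exactly what makes (a) fall out afterwards. The guiding observation for both parts is that the operation $||\alpha$ of \eqref{t2.11} is extremely rigid: reading off its definition, it slashes each modular coefficient $a_i(f)$ by its own weight and does nothing whatsoever to the powers $G_2^i$. Hence, by the uniqueness in Theorem \ref{Th2.1}, the canonical decomposition of $f||\alpha$ has modular coefficients $a_i(f||\alpha)=a_i(f)|\alpha$. For part (b), the weight operator $W_k$ of \eqref{3.5x} only multiplies the $i$-th coefficient by the scalar $i$ and raises the exponent of $G_2$ by $k-2$; since $i\mapsto i+k-2$ is injective, no two terms collide on the same power of $G_2$ and I can compare $W_k(f)||\alpha$ and $W_k(f||\alpha)$ term by term. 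The $G_2^{i+k-2}$-coefficient of $W_k(f)$ is $i\,a_i(f)$, and $||\alpha$ slashes it to $i\,(a_i(f)|\alpha)$ because slashing commutes with the scalar $i$ and never touches $G_2$; this is precisely the $G_2^{i+k-2}$-coefficient of $W_k(f||\alpha)=\sum_i i\,a_i(f||\alpha)G_2^{i+k-2}$, which gives \eqref{3.8}.

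For part (a), I would first rewrite the definition \eqref{3.4} in the compact form $D(f)=-\frac{1}{8\pi i}\,W_1(f)\cdot G_2'$, a genuine product in $\mathcal{QM}$ with $W_1$ as in \eqref{3.5x}. Applying $||\alpha$ and invoking the multiplicativity $(gh)||\alpha=(g||\alpha)(h||\alpha)$ from \eqref{tv2.11} reduces the whole statement to computing $G_2'||\alpha$. Writing $G_2'=\frac{5\pi i}{3}G_4-4\pi i\,G_2^2$ as in \eqref{3.1}, the coefficient $g_0=\frac{5\pi i}{3}G_4$ is modular of weight $4$ while $g_2=-4\pi i$ is a constant of weight $0$; thus $||\alpha$ replaces $G_4$ by $G_4|\alpha$ but fixes $g_2$, giving $G_2'||\alpha=G_2'+\frac{5\pi i}{3}(G_4|\alpha-G_4)=G_2'-8\pi i\,\nu_\alpha^{(1)}$, where the constant $-8\pi i$ is read off by matching with the definition of $\nu_\alpha^{(1)}$ in \eqref{nu}–\eqref{3.7}. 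Substituting back yields
\[
D(f)||\alpha=-\frac{1}{8\pi i}\,(W_1(f)||\alpha)\,G_2'+\nu_\alpha^{(1)}\,(W_1(f)||\alpha),
\]
and part (b) with $k=1$ rewrites the first summand as $-\frac{1}{8\pi i}\,W_1(f||\alpha)\,G_2'=D(f||\alpha)$, which is exactly \eqref{3.6}.

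The only genuine subtlety, and the point to handle with care, is that the slash $|\alpha$ is weight-dependent: the entire correction term $\nu_\alpha^{(1)}\cdot(W_1(f)||\alpha)$ arises precisely from the asymmetry that $||\alpha$ moves the modular weight-$4$ form $G_4$ inside $G_2'$ to $G_4|\alpha$ while leaving the constant $g_2$ untouched. Everything else is routine bookkeeping of weights and the verification of the normalizing constant, so I expect no further obstacle once $G_2'||\alpha$ has been pinned down.
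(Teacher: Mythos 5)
Your proof is correct and follows essentially the same route as the paper: both arguments come down to the observation that in $G_2'=g_0+g_1G_2+g_2G_2^2$ only the weight-$4$ coefficient $g_0=\frac{5\pi i}{3}G_4$ is displaced by the slash (since $g_1=0$ and $g_2$ is a constant of weight $0$), which produces exactly the correction term $\nu_\alpha^{(1)}\cdot (W_1(f)||\alpha)$. The only difference is organizational: you package the computation as $D(f)=-\frac{1}{8\pi i}W_1(f)\cdot G_2'$ and invoke the multiplicativity in \eqref{tv2.11} together with part (b), whereas the paper expands the double sum over $i,j$ directly and subtracts.
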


\begin{proof} We start by proving part (a). For the sake of definiteness, we assume that $ f=\underset{i=0}{\overset{s}{\sum}}a_i(f)G_2^i$
with each $a_i(f)\in \mathcal M$. For $\alpha\in GL_2^+(\mathbb Q)$, it follows from \eqref{3.4} that:
\begin{equation}\label{3.8z}
\begin{array}{lllll}
D(f)||\alpha & =  -\frac{1}{8\pi i}\left(\underset{i}{\sum}\underset{j}{\sum} ia_i(f)g_jG_2^{i+j-1}\right)||\alpha & \textrm{ }& D(f||\alpha) & =  D\left(\underset{i}{\sum} (a_i(f)|\alpha)G_2^i \right)\\ 
& = -\frac{1}{8\pi i} \underset{i}{\sum}\underset{j}{\sum}\textrm{ } i(a_i(f)|\alpha)(g_j|\alpha)G_2^{i+j-1} & & & = -\frac{1}{8\pi i}\underset{i}{\sum}
\underset{j}{\sum}\textrm{ }i(a_i(f)|\alpha)g_jG_2^{i+j-1} \end{array}
\end{equation} From \eqref{3.8z} it follows that:
\begin{equation}\label{3.9}
D(f)||\alpha - D(f||\alpha)= -\frac{1}{8\pi i}\sum_{i=0}^s\sum_{j=0}^2 \textrm{ }i(a_i(f)|\alpha)(g_j|\alpha - g_j)G_2^{i+j-1}
\end{equation} From \eqref{3.3}, it is clear that $g_j|\alpha - g_j=0$ for $j=1$ and $j=2$. It follows that:
\begin{equation*}
D(f)||\alpha - D(f||\alpha)= -\frac{1}{8\pi i}\sum_{i=0}^s\textrm{ }i(a_i(f)|\alpha)(g_0|\alpha - g_0)G_2^{i-1}= -\frac{1}{8\pi i}(g_0|\alpha 
- g_0)\cdot \left (\sum_{i=0}^s\textrm{ }i(a_i(f)|\alpha) G_2^{i-1}\right)
\end{equation*} This proves the result of (a). The result of part (b) is clear from the definition in \eqref{3.5x}.

\end{proof}

\medskip
We note here that it follows from \eqref{nu} that for any $\alpha$, $\beta\in GL_2^+(\mathbb Q)$, we have:
\begin{equation}\label{3.12}
\nu_{\alpha\beta}^{(m)}=\nu_{\alpha}^{(m)}|\beta + \nu_\beta^{(m)} \qquad \forall \textrm{ }m\geq 1
\end{equation} Additionally, since each $G_4^m$ is a modular form, we know that when $\alpha\in SL_2(\mathbb Z)$:
\begin{equation}\label{3.125}
\nu_\alpha^{(m)}=-\frac{5}{24}(G_4^m|\alpha - G_4^m) = 0 \qquad \forall\textrm{ }
\alpha\in SL_2(\mathbb Z), m\geq 1
\end{equation} Moreover, from the definitions in \eqref{3.4} and \eqref{3.5x} respectively, it is easily verified that $D$ and $\{W_k\}_{k\geq 1}$
are derivations on the quasimodular tower $\mathcal{QM}$. We now proceed to define operators
on the quasimodular Hecke algebra $\mathcal Q(\Gamma)$ for some principal congruence subgroup
$\Gamma=\Gamma(N)$. Choose $F\in \mathcal Q(\Gamma)$. We set:
\begin{equation}\label{3.13}
\begin{array}{c}
D,W_k,\phi^{(m)}: \mathcal Q(\Gamma)\longrightarrow \mathcal Q(\Gamma) \quad k\geq 1, m\geq 1\\
D(F)_\alpha: = D(F_\alpha) \quad W_k(F)_\alpha :=W_k(F_\alpha) \quad \phi^{(m)}(F)_\alpha:=\nu_\alpha^{(m)}
\cdot F_\alpha \qquad \forall \textrm{ }\alpha\in GL_2^+(\mathbb Q) \\
\end{array}
\end{equation} From Lemma \ref{L3.1} and the properties of $\nu_\alpha^{(m)}$ described in 
\eqref{3.12} and \eqref{3.125}, it may be easily verified that the operators 
$D$, $W_k$ and $\phi^{(m)}$ in \eqref{3.13} are well defined on $\mathcal Q(\Gamma)$. 
We will now compute the commutators of the operators $D$, $\{W_k\}_{k\geq 1}$ and $\{\phi^{(m)}\}_{m\geq 1}$ on
$\mathcal Q(\Gamma)$. In order to describe these commutators, we need one more operator $E$:
\begin{equation}\label{3.19T}
E:\mathcal{QM}\longrightarrow \mathcal{QM}\qquad f\mapsto G_4\cdot f
\end{equation} Since $G_4$ is a modular form of level $\Gamma(1)=SL_2(\mathbb Z)$, i.e., 
$G_4|\gamma=G_4$ for any $\gamma\in SL_2(\mathbb Z)$, it is clear that $E$ induces a
well defined operator on $\mathcal Q(\Gamma)$:
\begin{equation}
E:\mathcal Q(\Gamma)\longrightarrow \mathcal Q(\Gamma) \qquad E(F)_\alpha:=E(F_\alpha)=G_4\cdot F_\alpha\quad \forall\textrm{ }F\in 
\mathcal Q(\Gamma), \alpha\in GL_2^+(\mathbb Q)
\end{equation} We will now describe the commutator relations between the operators  $D$, $E$, $\{E^lW_k\}_{k\geq 1,l\geq 0}$ and $\{\phi^{(m)}\}_{m\geq 1}$ on $\mathcal Q(\Gamma)$. 

\medskip
\begin{thm}\label{P3.3} Let $\Gamma =\Gamma(N)$ be a principal congruence subgroup and let $\mathcal Q(\Gamma)$ be the algebra of quasimodular Hecke operators of level $\Gamma$. The operators $D$, $E$, $\{E^lW_k\}_{k\geq 1,l\geq 0}$ and $\{\phi^{(m)}\}_{m\geq 1}$ on $\mathcal Q(\Gamma)$ satisfy the following relations:
\begin{equation}\label{3.19}
\begin{array}{c}
[E,E^lW_k]=0 \quad [E,D]=0 \quad [E,\phi^{(m)}]=0 \quad [D,\phi^{(m)}]=0 \quad [W_k,\phi^{(m)}]=0 \quad [\phi^{(m)},
\phi^{(m')}]=0 \\
\mbox{$[E^lW_k,D]=\frac{5}{24}(k-1)(E^{l+1}W_{k-1})- \frac{1}{2}(k-3)E^lW_{k+1}$}\\
\end{array}
\end{equation} 
\end{thm}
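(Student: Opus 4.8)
The plan is to exploit that every operator occurring in \eqref{3.19} acts one coset at a time, so that each identity reduces to an identity of operators on the quasimodular tower $\mathcal{QM}$. Indeed, $D$, $E$ and $W_k$ act by $F_\alpha\mapsto D(F_\alpha)$, $F_\alpha\mapsto G_4\cdot F_\alpha$ and $F_\alpha\mapsto W_k(F_\alpha)$, uniformly at every coset, while $\phi^{(m)}$ acts at the coset $\Gamma\alpha$ by multiplication by the modular form $\nu_\alpha^{(m)}$ of \eqref{nu}. Since the decomposition $f=\sum_i a_i(f)G_2^i$ of Theorem \ref{Th2.1} is unique, each operator is completely described by its effect on the pair (modular coefficient, power of $G_2$), and I would track precisely this data throughout.

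First I would clear away the vanishing commutators. As $E$ and $\phi^{(m)}$ are both multiplication operators (by $G_4$ and $\nu_\alpha^{(m)}$ respectively) and $\mathcal{QM}$ is a commutative ring, the relations $[E,\phi^{(m)}]=0$ and $[\phi^{(m)},\phi^{(m')}]=0$ are immediate. For $[E,D]=0$ and $[E,W_k]=0$, the point is that multiplying by the modular form $G_4$ merely replaces each coefficient $a_i$ by $G_4a_i$, which is again modular by uniqueness of the decomposition, without touching the exponent of $G_2$; since $D$ and $W_k$ only manipulate that exponent and rescale the coefficient by the integer $i$, they are insensitive to this replacement. This gives $[E,D]=0=[E,W_k]$, whence $[E,E^lW_k]=E^l[E,W_k]=0$. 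For the commutators with $\phi^{(m)}$ I would use that $D$ and $W_k$ are derivations of $\mathcal{QM}$ and that $\nu_\alpha^{(m)}$ has depth $0$; since the definitions \eqref{3.4} and \eqref{3.5x} carry an explicit factor equal to the exponent $i$, both operators annihilate depth-$0$ forms, so $D(\nu_\alpha^{(m)})=W_k(\nu_\alpha^{(m)})=0$. The Leibniz rule then gives $D(\nu_\alpha^{(m)}F_\alpha)=\nu_\alpha^{(m)}D(F_\alpha)$, and likewise for $W_k$, i.e. $[D,\phi^{(m)}]=[W_k,\phi^{(m)}]=0$.

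For the surviving relation I would first reduce to $l=0$: since $[E,D]=0$ implies $[E^l,D]=0$, we have $[E^lW_k,D]=E^l(W_kD-DW_k)=E^l[W_k,D]$, so it is enough to compute $[W_k,D]$ and then apply $E^l$. Substituting the values $g_0=\frac{5\pi i}{3}G_4$, $g_1=0$, $g_2=-4\pi i$ of \eqref{3.3} into \eqref{3.4}, the operator $D$ takes the two-term form
\[
D(f)=-\frac{5}{24}\sum_i ia_i\,G_4G_2^{i-1}+\frac{1}{2}\sum_i ia_i\,G_2^{i+1}
\]
on $f=\sum_i a_iG_2^i$. I would then compute $W_kD(f)$ and $DW_k(f)$ term by term, in each monomial multiplying the coefficient by the power of $G_2$ actually present when $W_k$ acts. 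The non-commutativity is driven entirely by the fact that in $W_kD$ the operator $W_k$ sees the exponents $i-1$ and $i+1$ produced by $D$, whereas in $DW_k$ it has first shifted the exponent to $i+k-2$; subtracting, the two families survive with weights $(i-1)-(i+k-2)=-(k-1)$ and $(i+1)-(i+k-2)=-(k-3)$, giving
\[
[W_k,D](f)=\frac{5}{24}(k-1)\sum_i ia_i\,G_4G_2^{i+k-3}-\frac{1}{2}(k-3)\sum_i ia_i\,G_2^{i+k-1}.
\]
Recognizing the two sums as $EW_{k-1}(f)$ and $W_{k+1}(f)$ respectively yields $[W_k,D]=\frac{5}{24}(k-1)EW_{k-1}-\frac{1}{2}(k-3)W_{k+1}$, and applying $E^l$ produces the asserted formula $[E^lW_k,D]=\frac{5}{24}(k-1)E^{l+1}W_{k-1}-\frac{1}{2}(k-3)E^lW_{k+1}$. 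The boundary case $k=1$ causes no trouble since the possibly undefined symbol $W_0$ enters only with the coefficient $\frac{5}{24}(k-1)$, which vanishes there.

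The main obstacle is nothing conceptual but the exponent bookkeeping in $[W_k,D]$: one must keep straight that $W_k$ weights each monomial by the power of $G_2$ present at the instant it is applied, so that the commutator measures exactly the discrepancy between the exponent after $D$ and the exponent after $W_k$. Once $D$ is written in its two-term form, the structure constants $\frac{5}{24}(k-1)$ and $-\frac{1}{2}(k-3)$ fall out of these two exponent differences, and the identification with $EW_{k-1}$ and $W_{k+1}$ is transparent.
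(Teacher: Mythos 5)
Your proposal is correct and follows essentially the same route as the paper: reduce every identity to the quasimodular tower, dispose of the vanishing commutators from the definitions, and compute $[W_k,D]$ by tracking the exponent of $G_2$ using $g_0=\tfrac{5\pi i}{3}G_4$, $g_1=0$, $g_2=-4\pi i$, then apply $E^l$. The only differences are presentational (you substitute the $g_j$ first to get the two-term form of $D$, and you spell out the derivation/depth-$0$ argument and the $k=1$ boundary case that the paper leaves as ``easily verified'').
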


\begin{proof} For any $F\in \mathcal Q(\Gamma)$ and any $\alpha\in GL_2^+(\mathbb Q)$, by definition, we know that $D(F)_\alpha=D(F_\alpha)$, $W_k(F)_\alpha=W_k(F_\alpha)$ and 
$E(F)_\alpha=E(F_\alpha)$.  Hence, in order to prove that $[E,W_k]=0$ and
$[E,D]=0$, it suffices to show that $[E,W_k](f)=0$ and $[E,D](f)=0$ respectively  for any element $f\in 
\mathcal{QM}$. Both of these are easily verified from the definitions of $D$ and $W_k$ in 
\eqref{3.4} and \eqref{3.5x} respectively. Further, since $[E,W_k]=0$, it is clear that
$[E,E^lW_k]=0$.

\medskip Similarly, in order to prove the expression for $[E^lW_k,D]$, it suffices to prove that:
\begin{equation}
[E^lW_k,D](f) = \frac{5}{24}(k-1)(E^{l+1}W_{k-1})(f)- \frac{1}{2}(k-3)E^lW_{k+1}(f)
\end{equation} for any $f\in \mathcal{QM}$. Further, it suffices to consider the case where $f=\sum_{i=0}^sa_i(f)G_2^i$ where the
$a_i(f)\in \mathcal M$. We now have:
\begin{equation}\label{3.20}
\begin{array}{c}
W_kD(f) =  -\frac{1}{8\pi i}W_k\left(\underset{i=0}{\overset{s}{\sum}}\underset{j=0}{\overset{2}{\sum}} ia_i(f)g_jG_2^{i+j-1} \right) = -\frac{1}{8\pi i}\underset{i=0}{\overset{s}{\sum}}\underset{j=0}{\overset{2}{\sum}} i(i+j-1)a_i(f)g_jG_2^{i+j+k-3}\\
DW_k(f)=D\left( \underset{i=0}{\overset{s}{\sum}}ia_i(f)G_2^{i+k-2}\right)=  -\frac{1}{8\pi i}\underset{i=0}{\overset{s}{\sum}}\underset{j=0}{\overset{2}{\sum}} i(i+k-2)a_i(f)g_jG_2^{i+j+k-3}\\ 
\end{array}
\end{equation} It follows from \eqref{3.20} that:
\begin{equation*}\label{3.21}
\begin{array}{ll}
[W_k,D](f)&=-\frac{1}{8\pi i}\underset{i=0}{\overset{s}{\sum}}\underset{j=0}{\overset{2}{\sum}} ija_i(f)g_jG_2^{i+j+k-3} +\frac{1}{8\pi i}\underset{i=0}{\overset{s}{\sum}}\underset{j=0}{\overset{2}{\sum}} i(k-1)a_i(f)g_jG_2^{i+j+k-3} \\ & =-\frac{2g_2}{8\pi i}\underset{i=0}{\overset{s}{\sum}}ia_i(f)G_2^{i+k-1}+(k-1)\frac{1}{8\pi i}\underset{i=0}{\overset{s}{\sum}}ia_i(f)g_0G_2^{i+k-3}
+(k-1)\frac{g_2}{8\pi i}\underset{i=0}{\overset{s}{\sum}}ia_i(f)G_2^{i+k-1} \\
\end{array}
\end{equation*} where the second equality uses the fact that $g_1=0$. Further, since $g_0=\frac{5\pi i}{3}G_4$ and $g_2=-4\pi i$, it follows from \eqref{3.21} that we have:
\begin{equation}\label{3.24cv}
\begin{array}{l}
[W_k,D](f)=\frac{5}{24}(k-1)\underset{i=0}{\overset{s}{\sum}}iG_4a_i(f)G_2^{i+k-3}
-\frac{1}{2}(k-3)\underset{i=0}{\overset{s}{\sum}}ia_i(f)G_2^{i+k-1} \\
=\frac{5}{24}(k-1)(EW_{k-1})(f) - \frac{1}{2}(k-3)W_{k+1}(f)\\
\end{array}
\end{equation} Finally, since $E$ commutes with $\{W_k\}_{k\geq 1}$ and $D$, it follows
from \eqref{3.24cv} that:
\begin{equation}
[E^lW_k,D]=\frac{5}{24}(k-1)(E^{l+1}W_{k-1})- \frac{1}{2}(k-3)E^lW_{k+1} \qquad \forall\textrm{ }k\geq 1,l\geq 0
\end{equation} as operators on $\mathcal Q(\Gamma)$. Finally, it may be easily verified from the definitions that
$[E,\phi^{(m)}]= [D,\phi^{(m)}]= [W_k,\phi^{(m)}]=0$. 

\end{proof} 

\medskip The operators $\{E^lW_k\}_{k\geq 1,l\geq 0}$ appearing in Proposition \ref{P3.3} above can be described
more succintly as:
\begin{equation}
T^l_k:\mathcal{QM}\longrightarrow \mathcal{QM} \qquad T^l_k:=E^lW_k \qquad \forall\textrm{ }k\geq 1, l\geq 0
\end{equation} and
\begin{equation}
T^l_k:\mathcal{Q}(\Gamma)\longrightarrow \mathcal{Q}(\Gamma) \qquad T^l_k(F)_\alpha:=T^l_k(F_\alpha)=E^lW_k(F_\alpha)
 \qquad \forall\textrm{ }F\in \mathcal Q(\Gamma),\alpha\in GL_2^+(\mathbb Q)
\end{equation}
We are now ready to describe the Lie algebra action on $\mathcal Q(\Gamma)$.

\medskip
\begin{thm}\label{excp} Let $\mathcal L$ be the   Lie algebra generated by the symbols $D$, $\{T^l_k\}_{k\geq 1, l\geq 0}$, 
$\{\phi^{(m)}\}_{m\geq 1}$ along with the following relations between the commutators: \begin{equation}\label{3.27}
\begin{array}{c}
[T_k^l,T_{k'}^{l'}]=(k'-k)T_{k+k'-2}^{l+l'}\\
\mbox{$[D,\phi^{(m)}]=0$} \qquad [T^l_k,\phi^{(m)}]=0 \qquad [\phi^{(m)},
\phi^{(m')}]=0 \\
\mbox{$[T^l_k,D]$}=  \frac{5}{24}(k-1)T_{k-1}^{l+1}- \frac{1}{2}(k-3)T^l_{k+1}\\
\end{array}
\end{equation} Then, for any principal congruence subgroup $\Gamma=\Gamma(N)$, we have a Lie action of $\mathcal L$ on
the algebra of quasimodular Hecke operators $\mathcal Q(\Gamma)$ of level $\Gamma$.
\end{thm}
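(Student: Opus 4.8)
The plan is to realize the asserted Lie action as a Lie algebra homomorphism $\rho : \mathcal L \to \mathrm{End}(\mathcal Q(\Gamma))$, where $\mathrm{End}(\mathcal Q(\Gamma))$ denotes the additive endomorphisms of $\mathcal Q(\Gamma)$ equipped with the commutator bracket $[A,B] = AB - BA$. Since $\mathcal L$ is presented by the generators $D$, $\{T^l_k\}_{k\geq 1,l\geq 0}$, $\{\phi^{(m)}\}_{m\geq 1}$ subject to the relations in \eqref{3.27}, the universal property of such a presentation reduces the construction of $\rho$ to two tasks: first, assign to each generating symbol the corresponding operator on $\mathcal Q(\Gamma)$ already defined in Section 3.1 (namely $D \mapsto D$, $T^l_k \mapsto E^l W_k$, $\phi^{(m)} \mapsto \phi^{(m)}$); and second, check that these operators satisfy, as commutators in $\mathrm{End}(\mathcal Q(\Gamma))$, exactly the relations imposed in the definition of $\mathcal L$. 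The Jacobi identity is then automatic, since the bracket on $\mathrm{End}(\mathcal Q(\Gamma))$ is the commutator of genuine operators.

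Most of the second task is already in hand. The relations $[T^l_k, D] = \frac{5}{24}(k-1) T^{l+1}_{k-1} - \frac{1}{2}(k-3) T^l_{k+1}$, $[D, \phi^{(m)}] = 0$, and $[\phi^{(m)}, \phi^{(m')}] = 0$ are precisely the content of Proposition \ref{P3.3}. The relation $[T^l_k, \phi^{(m)}] = 0$ is not stated there verbatim, but follows at once: writing $T^l_k = E^l W_k$ and invoking $[E, \phi^{(m)}] = 0$ and $[W_k, \phi^{(m)}] = 0$ from Proposition \ref{P3.3}, one obtains $[E^l W_k, \phi^{(m)}] = 0$.

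The one relation not addressed by Proposition \ref{P3.3}, and hence the only genuine computation, is $[T^l_k, T^{l'}_{k'}] = (k'-k) T^{l+l'}_{k+k'-2}$; this is where I expect the main work to lie. I would first reduce it to the $W$-operators alone: since $E$ (multiplication by $G_4$) commutes with every $W_k$ by Proposition \ref{P3.3}, the factors of $E$ may be collected, giving $[E^l W_k, E^{l'} W_{k'}] = E^{l+l'} [W_k, W_{k'}]$. It then suffices to verify $[W_k, W_{k'}] = (k'-k) W_{k+k'-2}$ on $\mathcal{QM}$, for which I would apply the two weight operators successively to $f = \sum_i a_i(f) G_2^i$ using \eqref{3.5x}: applying $W_{k'}$ produces $\sum_i i\, a_i(f) G_2^{i+k'-2}$, and then applying $W_k$ multiplies the term of $G_2$-degree $i+k'-2$ by that very degree, yielding $\sum_i (i+k'-2)\, i\, a_i(f) G_2^{i+k+k'-4}$; subtracting the analogous expression for $W_{k'} W_k$ collapses the degree factors to $(k'-k)$, which is exactly $(k'-k) W_{k+k'-2}(f)$. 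Multiplying back by $E^{l+l'}$ gives the claimed identity on $\mathcal Q(\Gamma)$.

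With all defining relations of $\mathcal L$ verified among the operators, the universal property of the presentation yields a unique Lie algebra homomorphism $\rho : \mathcal L \to \mathrm{End}(\mathcal Q(\Gamma))$ sending each generator to its named operator, which is the desired Lie action. The only subtlety to keep an eye on is the bookkeeping of commutator signs, so that the direction of each relation in \eqref{3.27} matches the computed bracket (e.g. that $[W_k, W_{k'}]$, and not its negative, equals $(k'-k) W_{k+k'-2}$); beyond this, no new ideas beyond Proposition \ref{P3.3} and the short $W$-operator computation are required.
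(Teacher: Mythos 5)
Your proposal is correct and follows essentially the same route as the paper: both reduce the claim to verifying the defining relations of $\mathcal L$ for the named operators, with the only new computation being $[T^l_k,T^{l'}_{k'}]=(k'-k)T^{l+l'}_{k+k'-2}$, carried out on $f=\sum_i a_i(f)G_2^i$ exactly as you describe (the paper keeps the $G_4^l$ factors inside the computation rather than peeling off $E^{l+l'}$ first, a cosmetic difference). Your sign bookkeeping is right; note the paper's own display \eqref{3.27EX} has a typo $(k-k')$ that its subsequent calculation corrects to $(k'-k)$.
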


\begin{proof} For any $k\geq 1$ and $l\geq 0$, $T^l_k$ has been defined to be the operator $E^lW_k$ on $\mathcal Q(\Gamma)$. 
We want to verify that:
\begin{equation}\label{3.27EX}
[T_k^l,T_{k'}^{l'}]=(k-k')T_{k+k'-2}^{l+l'}\qquad\forall\textrm{ }k,k'\geq 1,\textrm{ }l,l'\geq 0
\end{equation} As in the proof of Proposition \ref{P3.3}, it suffices to show that the relation in \eqref{3.27EX}
holds for any $f\in \mathcal{QM}$. As before, we let $f=\sum_{i=0}^sa_i(f)G_2^i$ where each $a_i(f)\in \mathcal M$. We now have:
\begin{equation}\label{3.28EX}
\begin{array}{c}
T_k^lT_{k'}^{l'}(f)=T_k^l\left(\underset{i=0}{\overset{s}{\sum}}ia_i(f)G_4^{l'}\cdot G_2^{i+k'-2}\right)=\underset{i=0}{\overset{s}{\sum}}i(i+k'-2)a_i(f)G_4^{l+l'}\cdot G_2^{i+k'+k-4}\\
T_{k'}^{l'}T_k^l(f)=T_{k'}^{l'}\left(\underset{i=0}{\overset{s}{\sum}}ia_i(f)G_4^{l}\cdot G_2^{i+k-2}\right)=\underset{i=0}{\overset{s}{\sum}}i(i+k-2)a_i(f)G_4^{l+l'}\cdot G_2^{i+k'+k-4}\\
\end{array}
\end{equation} From \eqref{3.28EX} it follows that:
\begin{equation}
[T_k^l,T_{k'}^{l'}](f)=(k'-k)\underset{i=0}{\overset{s}{\sum}}ia_i(f)G_4^{l+l'}\cdot G_2^{i+k'+k-4}=(k'-k)T_{k+k'-2}^{l+l'}
\end{equation} Hence, the relation \eqref{3.27EX} holds for the operators $T_k^l$, $T_{k'}^{l'}$ acting on
$\mathcal Q(\Gamma)$.
The remaining relations in \eqref{3.27}  for  the Lie action of $\mathcal L$  on $\mathcal Q(\Gamma)$ follow from  \eqref{3.19}.
\end{proof}

\medskip

\begin{lem}\label{L3.5} Let $f\in \mathcal{QM}$ be an element of the quasimodular tower and let $\alpha\in GL_2^+(\mathbb Q)$. Then, for
any $k\geq1 $, $l\geq 0$, the operator $T^l_k:\mathcal{QM}
\longrightarrow \mathcal{QM}$ satisfies:
\begin{equation}
T^l_k(f)||\alpha = T^l_k(f||\alpha)-\frac{24}{5}\nu_{\alpha}^{(l)}\cdot (T^0_k(f)||\alpha)
\end{equation}
\end{lem}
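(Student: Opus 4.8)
The plan is to reduce to a single homogeneous term and then expand both sides directly; the entire content of the identity is to keep track of which copies of $G_4$ are hit by the slash operator $|\alpha$. By $\mathbb C$-linearity and Theorem \ref{Th2.1}, I would assume $f=\sum_{i=0}^s a_i(f)G_2^i$ with each $a_i(f)\in\mathcal M$. Since $T^l_k=E^lW_k$ with $W_k$ as in \eqref{3.5x} and $E$ the multiplication by $G_4$ from \eqref{3.19T}, I first compute
\[
T^l_k(f)=G_4^l\,W_k(f)=\sum_{i=0}^s i\,a_i(f)\,G_4^l\,G_2^{i+k-2},
\]
so that the modular coefficient of $G_2^{i+k-2}$ in $T^l_k(f)$ is the product $i\,a_i(f)\,G_4^l$.

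Next I would apply $||\alpha$. By definition \eqref{t2.11} this slashes each modular coefficient, and since the slash operator is multiplicative on products of modular forms (with weights adding), the coefficient $a_i(f)G_4^l$ becomes $(a_i(f)|\alpha)(G_4^l|\alpha)$; hence
\[
T^l_k(f)||\alpha=\sum_{i=0}^s i\,(a_i(f)|\alpha)\,(G_4^l|\alpha)\,G_2^{i+k-2}.
\]
The crucial point is that here $G_4^l$ has been slashed \emph{together} with $a_i(f)$. By contrast, computing $T^l_k(f||\alpha)$ first slashes $f$ and only afterwards multiplies by an \emph{unslashed} $G_4^l$: from $f||\alpha=\sum_i(a_i(f)|\alpha)G_2^i$ one gets $T^l_k(f||\alpha)=\sum_i i\,(a_i(f)|\alpha)\,G_4^l\,G_2^{i+k-2}$, and similarly $T^0_k(f)||\alpha=\sum_i i\,(a_i(f)|\alpha)\,G_2^{i+k-2}$ (this last is the $l=0$ instance, consistent with Lemma \ref{L3.1}(b)).

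Finally I would absorb the discrepancy into $\nu_\alpha^{(l)}$. From the definition \eqref{nu} we have $-\tfrac{24}{5}\nu_\alpha^{(l)}=G_4^l|\alpha-G_4^l$, so
\[
T^l_k(f||\alpha)-\tfrac{24}{5}\nu_\alpha^{(l)}\cdot\bigl(T^0_k(f)||\alpha\bigr)=\sum_{i=0}^s i\,(a_i(f)|\alpha)\bigl(G_4^l+(G_4^l|\alpha-G_4^l)\bigr)G_2^{i+k-2},
\]
and the bracketed terms collapse to $G_4^l|\alpha$, recovering exactly the expression for $T^l_k(f)||\alpha$ above. The only step demanding care --- and the sole obstacle --- is the weight bookkeeping in the multiplicativity $(a_i(f)G_4^l)|\alpha=(a_i(f)|\alpha)(G_4^l|\alpha)$; once this is in hand, the identity reflects nothing more than the failure of $G_4^l$ to be invariant under $GL_2^+(\mathbb Q)$ (it is invariant under $SL_2(\mathbb Z)$, so $\nu_\alpha^{(l)}=0$ there by \eqref{3.125}), which is precisely what obstructs the naive equality $T^l_k(f)||\alpha=T^l_k(f||\alpha)$.
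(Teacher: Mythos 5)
Your proof is correct and follows essentially the same route as the paper: decompose $f=\sum_i a_i(f)G_2^i$ via Theorem \ref{Th2.1}, compute $T^l_k(f)||\alpha$ and $T^l_k(f||\alpha)$ termwise, and observe that their difference is $(G_4^l|\alpha-G_4^l)\cdot(W_k(f)||\alpha)=-\tfrac{24}{5}\nu_\alpha^{(l)}\cdot(T^0_k(f)||\alpha)$. Your explicit remark that the multiplicativity $(a_i(f)G_4^l)|\alpha=(a_i(f)|\alpha)(G_4^l|\alpha)$ is the one point needing care is a fair gloss on what the paper does implicitly via \eqref{tv2.11}.
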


\begin{proof} For the sake of definiteness, we assume that $f=\sum_{i=0}^sa_i(f)\cdot G_2^i$ with each $a_i(f)\in \mathcal M$. We now compute:

\begin{equation}
\begin{array}{lll}
T^l_k(f)||\alpha = (E^lW_k)(f)||\alpha& \qquad \qquad & T^l_k(f||\alpha) = (E^lW_k)(f||\alpha) \\
= \left(\underset{i=0}{\overset{s}{\sum}}iG_4^l\cdot a_i(f)G_2^{i+k-2}\right)||\alpha & & 
=(E^lW_k)\left(\underset{i=0}{\overset{s}{\sum}}(a_i(f)|\alpha)G_2^i\right)\\
= \underset{i=0}{\overset{s}{\sum}}i(G_4^l|\alpha)\cdot (a_i(f)|\alpha)G_2^{i+k-2} & & 
=\underset{i=0}{\overset{s}{\sum}}i(G_4^l)\cdot (a_i(f)|\alpha)G_2^{i+k-2}\\
\end{array}
\end{equation} 
Subtracting, it follows that:
\begin{equation}
T^l_k(f)||\alpha - T^l_k(f||\alpha) = (G_4^l|\alpha - G_4^l)\cdot \left( \underset{i=0}{\overset{s}{\sum}}i(a_i(f)|\alpha)
G_2^{i+k-2}\right)=-\frac{24}{5}\nu_{\alpha}^{(l)}\cdot (W_k(f)||\alpha)
\end{equation} Putting $T^0_k=E^0W_k=W_k$, we have the result.

\end{proof} 

\medskip

\begin{thm}\label{Prp3.6} Let $\Gamma=\Gamma(N)$ be a principal congruence subgroup and let $\mathcal Q(\Gamma)$ be the algebra
of quasimodular Hecke operators of level $\Gamma$. Then, for any $k\geq 1$, $l\geq 0$, the operator
$T_k^l$ satisfies:
\begin{equation}\label{3.32}
T_k^l(F^1\ast F^2)=T_k^l(F^1)\ast F^2 + F^1\ast T_k^l(F^2)+\frac{24}{5}  (\phi^{(l)}(F^1)\ast T_k^0(F^2))_\alpha \qquad \forall \textrm{ } F^1,F^2 \in \mathcal Q(\Gamma)
\end{equation} Further, the operators $\{T_k^l\}_{k\geq 1,l\geq 0}$ are all derivations on the algebra
$\mathcal Q^r(\Gamma)=(\mathcal Q(\Gamma),\ast^r)$. 
\end{thm}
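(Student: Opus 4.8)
The plan is to verify \eqref{3.32} componentwise, fixing $\alpha\in GL_2^+(\mathbb Q)$ and computing $T_k^l(F^1\ast F^2)_\alpha$ by hand. Two facts drive the argument. First, $T_k^l=E^lW_k$ is a derivation on the commutative algebra $\mathcal{QM}$: $W_k$ is a derivation (noted after \eqref{3.5x}) and $E$ is multiplication by the fixed element $G_4^l$, and in a commutative ring the composite of a derivation with multiplication by a fixed element is again a derivation. Second, $T_k^l$ does not commute with the slash action $||\alpha$, and the discrepancy is measured exactly by Lemma \ref{L3.5}.

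First I would expand $(F^1\ast F^2)_\alpha$ in the pair form \eqref{2.12}, apply $T_k^l$ termwise, and invoke the Leibniz rule to get
\[
T_k^l(F^1\ast F^2)_\alpha=\sum_{\alpha_2\alpha_1=\alpha}\Big[T_k^l(F^1_{\alpha_1})\cdot(F^2_{\alpha_2}||\alpha_1)+F^1_{\alpha_1}\cdot T_k^l(F^2_{\alpha_2}||\alpha_1)\Big].
\]
The first summand equals $(T_k^l(F^1)\ast F^2)_\alpha$ since $T_k^l(F^1)_{\alpha_1}=T_k^l(F^1_{\alpha_1})$ by \eqref{3.13}. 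For the second I would substitute Lemma \ref{L3.5} in the rearranged form $T_k^l(g||\alpha_1)=T_k^l(g)||\alpha_1+\frac{24}{5}\nu_{\alpha_1}^{(l)}\cdot(T_k^0(g)||\alpha_1)$ with $g=F^2_{\alpha_2}$. Its leading part sums to $(F^1\ast T_k^l(F^2))_\alpha$, and the correction yields $\frac{24}{5}\sum_{\alpha_2\alpha_1=\alpha}F^1_{\alpha_1}\,\nu_{\alpha_1}^{(l)}\cdot(T_k^0(F^2)_{\alpha_2}||\alpha_1)$.

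The only step with any content is recognizing this correction as a convolution built from $\phi^{(l)}$: using commutativity of $\mathcal{QM}$ and $\phi^{(l)}(F^1)_{\alpha_1}=\nu_{\alpha_1}^{(l)}\cdot F^1_{\alpha_1}$ from \eqref{3.13}, the factor $\nu_{\alpha_1}^{(l)}$ absorbs into $F^1_{\alpha_1}$ and the sum becomes $\frac{24}{5}(\phi^{(l)}(F^1)\ast T_k^0(F^2))_\alpha$, giving \eqref{3.32}. For the derivation property on $\mathcal Q^r(\Gamma)$ I would rerun the identical computation for $\ast^r$, where by \eqref{2.11vv} the outer index $\alpha_1=\beta$ now ranges over $\Gamma\backslash SL_2(\mathbb Z)$; thus $\alpha_1\in SL_2(\mathbb Z)$ and $\nu_{\alpha_1}^{(l)}=0$ by \eqref{3.125}, so the entire correction term vanishes and $T_k^l(F^1\ast^r F^2)=T_k^l(F^1)\ast^r F^2+F^1\ast^r T_k^l(F^2)$. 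I anticipate no real obstacle beyond careful index bookkeeping; the substance lives entirely in Lemma \ref{L3.5} and in the vanishing \eqref{3.125}.
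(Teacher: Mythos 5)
Your proof is correct and follows essentially the same route as the paper: expand the convolution, use that $T_k^l=E^lW_k$ is a derivation on $\mathcal{QM}$, control the failure of $T_k^l$ to commute with $||\beta$ via Lemma \ref{L3.5} (the paper re-derives this inline as $G_4^l|\beta-G_4^l=-\tfrac{24}{5}\nu_\beta^{(l)}$ rather than citing the lemma), and absorb $\nu_\beta^{(l)}$ into $\phi^{(l)}(F^1)$, with the $\ast^r$ case following from \eqref{3.125}. The use of the pair form \eqref{2.12} instead of the coset-sum form \eqref{2.11} is only a cosmetic difference.
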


\begin{proof} We know that $T_k^l= E^lW_k$ and that $W_k$ is a derivation on $\mathcal{QM}$. We choose quasimodular
Hecke operators $F^1$, $F^2\in \mathcal Q(\Gamma)$. Then, for any $\alpha\in GL_2^+(\mathbb Q)$, we know that:
\begin{equation*}
\begin{array}{l}
T_k^l(F^1\ast F^2)_\alpha =E^lW_k\left(\underset{\beta\in \Gamma \backslash GL_2^+(\mathbb Q)}{\sum} F^1_\beta
\cdot (F^2_{\alpha\beta^{-1}}||\beta)\right) \\
= \underset{\beta\in \Gamma \backslash GL_2^+(\mathbb Q)}{\sum}  E^lW_k\left(F^1_\beta
\cdot (F^2_{\alpha\beta^{-1}}||\beta) \right) \\
 = \underset{\beta\in \Gamma \backslash GL_2^+(\mathbb Q)}{\sum} G_4^l\cdot W_k(F^1_\beta)\cdot (F^2_{\alpha\beta^{-1}}||\beta)+\underset{\beta\in \Gamma \backslash GL_2^+(\mathbb Q)}{\sum} F^1_\beta\cdot G_4^l\cdot  W_k(F^2_{\alpha\beta^{-1}}||\beta)\\
 = \underset{\beta\in \Gamma \backslash GL_2^+(\mathbb Q)}{\sum} G_4^l\cdot W_k(F^1_\beta)\cdot (F^2_{\alpha\beta^{-1}}||\beta)+\underset{\beta\in \Gamma \backslash GL_2^+(\mathbb Q)}{\sum} F^1_\beta\cdot G_4^l\cdot  (W_k(F^2_{\alpha\beta^{-1}})||\beta)\\
=(T_k^l(F^1)\ast F^2)_\alpha + \underset{\beta\in \Gamma \backslash GL_2^+(\mathbb Q)}{\sum} F^1_\beta\cdot (G_4^l|\beta)\cdot (W_k(F^2_{\alpha\beta^{-1}})||\beta)- \underset{\beta\in \Gamma \backslash GL_2^+(\mathbb Q)}{\sum} F^1_\beta\cdot  (G_4^l|\beta-G_4^l) \cdot (W_k(F^2_{\alpha\beta^{-1}})||\beta)\\
 = (T_k^l(F^1)\ast F^2)_\alpha +  (F^1\ast T_k^l(F^2))_\alpha +\frac{24}{5}  \underset{\beta\in \Gamma \backslash GL_2^+(\mathbb Q)}{\sum} F^1_\beta\cdot  \nu_\beta^{(l)} \cdot (W_k(F^2_{\alpha\beta^{-1}})||\beta) \\
 = (T_k^l(F^1)\ast F^2)_\alpha +  (F^1\ast T_k^l(F^2))_\alpha +\frac{24}{5}  (\phi^{(l)}(F^1)\ast T_k^0(F^2))_\alpha\\
\end{array}
\end{equation*} where it is understood that $\phi^{(0)}=0$.  This proves \eqref{3.32}. Further, since $\nu_\beta^{(l)}=0$ for any $\beta\in SL_2(\mathbb Z)$, 
when we consider the product $\ast^r$ defined in \eqref{2.11vv} on the algebra $\mathcal Q^r(\Gamma)$, the calculation above reduces to
\begin{equation}
T_k^l(F^1\ast^r F^2)=T_k^l(F^1)\ast^r F^2 + F^1\ast^r T_k^l(F^2)
\end{equation} Hence, each $T_k^l$ is a derivation on $\mathcal Q^r(\Gamma)$. 
\end{proof}

\medskip
\begin{thm}\label{Prp3.2} Let $\Gamma =\Gamma(N)$ be a principal congruence subgroup and let $\mathcal Q(\Gamma)$ be the algebra of quasimodular Hecke operators of level $\Gamma$. 

\medskip
(a) The operator $D:\mathcal Q(\Gamma)\longrightarrow \mathcal Q(\Gamma)$ on the algebra
$(\mathcal Q(\Gamma),\ast)$ satisfies:
\begin{equation}\label{3.15}
D(F^1\ast F^2)=D(F^1)\ast F^2 + F^1\ast D(F^2) -\phi^{(1)}(F^1)\ast T_1^0(F^2) \qquad\forall\textrm{ }F^1,F^2\in \mathcal Q(\Gamma)
\end{equation} When we consider the product $\ast^r$, the operator $D$ becomes a derivation
on the algebra $\mathcal Q^r(\Gamma)=(\mathcal Q(\Gamma),\ast^r)$, i.e.:
\begin{equation}\label{3.151z}
D(F^1\ast^r F^2)=D(F^1)\ast^r F^2 + F^1\ast^r D(F^2)  \qquad\forall\textrm{ }F^1,F^2\in \mathcal Q^r(\Gamma)
\end{equation}

\medskip
(b) The operators $\{W_k\}_{k\geq 1}$ and $\{\phi^{(m)}\}_{m\geq 1}$ are derivations on $\mathcal Q(\Gamma)$, i.e., 
\begin{equation}\label{3.152z}
\begin{array}{c}
W_k(F^1\ast F^2)=W_k(F^1)\ast F^2 + F^1\ast W_k(F^2)  \\
\phi^{(m)}(F^1\ast F^2)=\phi^{(m)}(F^1)\ast F^2 + F^1\ast \phi^{(m)}(F^2) \\
\end{array}
\end{equation} for any $F^1$, $F^2\in \mathcal Q(\Gamma)$. Additionally, $\{\phi^{(m)}\}_{m\geq 1}$ and
$\{W_k\}_{k\geq 1}$ are also derivations on the algebra $\mathcal Q^r(\Gamma)=(\mathcal Q(\Gamma),\ast^r)$. 

\end{thm}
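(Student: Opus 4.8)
The plan is to verify each Leibniz-type identity by substituting the convolution formula \eqref{2.11} (respectively \eqref{2.11vv} for $\ast^r$) and pushing the operator inside the coset sum. Since $D$, $W_k$ and $\phi^{(m)}$ are all defined on $\mathcal Q(\Gamma)$ componentwise via \eqref{3.13} — the first two applying the corresponding derivation of $\mathcal{QM}$ to $F_\alpha$, and $\phi^{(m)}$ multiplying $F_\alpha$ by $\nu_\alpha^{(m)}$ — the only genuine issue is how each operator interacts with the slash $(\,\cdot\,)||\beta$ appearing in the product. That interaction is exactly what Lemma \ref{L3.1} and the cocycle relation \eqref{3.12} control.

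For part (a) I would compute $D(F^1\ast F^2)_\alpha = \sum_\beta D\bigl(F^1_\beta\cdot(F^2_{\alpha\beta^{-1}}||\beta)\bigr)$, and since $D$ is a derivation on $\mathcal{QM}$ this splits into two sums. The first is immediately $(D(F^1)\ast F^2)_\alpha$. In the second I would expand $D\bigl(F^2_{\alpha\beta^{-1}}||\beta\bigr)$; here Lemma \ref{L3.1}(a) gives $D(g)||\beta = D(g||\beta)+\nu_\beta^{(1)}\cdot(W_1(g)||\beta)$, so that isolating $D(g||\beta)$ produces $(F^1\ast D(F^2))_\alpha$ together with a correction $-\sum_\beta \nu_\beta^{(1)}F^1_\beta\cdot(W_1(F^2_{\alpha\beta^{-1}})||\beta)$. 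Recognizing $\nu_\beta^{(1)}F^1_\beta=\phi^{(1)}(F^1)_\beta$ and $W_1=T_1^0$, this correction is precisely $-(\phi^{(1)}(F^1)\ast T_1^0(F^2))_\alpha$, which yields \eqref{3.15}. For the product $\ast^r$ the coset sum runs over $\Gamma\backslash SL_2(\mathbb Z)$, and since $\nu_\beta^{(1)}=0$ for $\beta\in SL_2(\mathbb Z)$ by \eqref{3.125}, the correction term vanishes and $D$ becomes an honest derivation, giving \eqref{3.151z}.

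For $W_k$ in part (b) the same computation is cleaner, because Lemma \ref{L3.1}(b) asserts $W_k(g)||\beta = W_k(g||\beta)$ with no correction term at all; hence applying the derivation property of $W_k$ on $\mathcal{QM}$ and commuting $W_k$ past $||\beta$ yields the Leibniz rule directly, for both $\ast$ and $\ast^r$. For $\phi^{(m)}$, which acts as multiplication by $\nu_\alpha^{(m)}$, I would instead compare $\nu_\alpha^{(m)}(F^1\ast F^2)_\alpha$ with $(\phi^{(m)}(F^1)\ast F^2)_\alpha+(F^1\ast\phi^{(m)}(F^2))_\alpha$. Using multiplicativity of $||\beta$ from \eqref{tv2.11}, together with the fact that $\nu^{(m)}$ lies in the modular tower so that $\nu^{(m)}||\beta=\nu^{(m)}|\beta$, the second term contributes $\sum_\beta (\nu_{\alpha\beta^{-1}}^{(m)}|\beta)\,F^1_\beta\cdot(F^2_{\alpha\beta^{-1}}||\beta)$, so the desired identity reduces to the pointwise relation $\nu_\beta^{(m)}+(\nu_{\alpha\beta^{-1}}^{(m)}|\beta)=\nu_\alpha^{(m)}$ — which is exactly the cocycle relation \eqref{3.12} applied to the pair $(\alpha\beta^{-1},\beta)$. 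The same check goes through verbatim for $\ast^r$.

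The main obstacle will be the bookkeeping around the failure of $D$ to commute with the slash operator: unlike $W_k$, the operator $D$ differentiates $G_2$, whose quasimodular (rather than modular) transformation law forces the $\nu_\beta^{(1)}$-correction in Lemma \ref{L3.1}(a). Correctly identifying that correction with the composite $\phi^{(1)}\ast T_1^0$ — and observing that it is precisely what is annihilated once the summation is restricted to $SL_2(\mathbb Z)$ — is the conceptual heart of part (a); the $W_k$ and $\phi^{(m)}$ verifications are then essentially formal, with the cocycle identity \eqref{3.12} doing all the work in the $\phi^{(m)}$ case.
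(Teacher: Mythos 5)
Your proposal is correct and follows essentially the same route as the paper's proof: splitting the convolution sum via the derivation property on $\mathcal{QM}$, invoking Lemma \ref{L3.1}(a) to identify the correction term $-\phi^{(1)}(F^1)\ast T_1^0(F^2)$ (which vanishes for $\ast^r$ by \eqref{3.125}), using Lemma \ref{L3.1}(b) for $W_k$, and the cocycle relation \eqref{3.12} for $\phi^{(m)}$. No gaps.
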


\begin{proof} (a) We choose quasimodular Hecke operators $F^1$, $F^2\in \mathcal Q(\Gamma)$. We have mentioned before that $D$ is a derivation on $\mathcal{QM}$.  Then,
for any $\alpha\in GL_2^+(\mathbb Q)$, we have:
\begin{equation*}
\begin{array}{ll}
D(F^1\ast F^2)_\alpha& =D\left(\underset{\beta\in \Gamma \backslash GL_2^+(\mathbb Q)}{\sum} F^1_\beta
\cdot (F^2_{\alpha\beta^{-1}}||\beta)\right) \\
& = \underset{\beta\in \Gamma \backslash GL_2^+(\mathbb Q)}{\sum}  D\left(F^1_\beta
\cdot (F^2_{\alpha\beta^{-1}}||\beta) \right) \\
& = \underset{\beta\in \Gamma \backslash GL_2^+(\mathbb Q)}{\sum} D(F^1_\beta)\cdot (F^2_{\alpha\beta^{-1}}||\beta)+\underset{\beta\in \Gamma \backslash GL_2^+(\mathbb Q)}{\sum} F^1_\beta\cdot D(F^2_{\alpha\beta^{-1}}||\beta)\\
& =(D(F^1)\ast F^2)_\alpha + \underset{\beta\in \Gamma \backslash GL_2^+(\mathbb Q)}{\sum} F^1_\beta\cdot (D(F^2_{\alpha\beta^{-1}})||\beta)- \underset{\beta\in \Gamma \backslash GL_2^+(\mathbb Q)}{\sum} F^1_\beta\cdot \nu_\beta^{(1)} \cdot (W_1(F^2_{\alpha\beta^{-1}})||\beta)\\
& = (D(F^1)\ast F^2)_\alpha +  (F^1\ast D(F^2))_\alpha -  (\phi^{(1)}(F^1)\ast T_1^0(F^2))_\alpha\\
\end{array}
\end{equation*} This proves \eqref{3.15}. In order to prove \eqref{3.151z}, we note that 
$\nu_\beta^{(1)}=0$ for any $\beta\in SL_2(\mathbb Z)$ (see \eqref{3.125}). Hence, when we use
the product $\ast^r$ defined in \eqref{2.11vv},  the calculation above
reduces to
\begin{equation}
D(F^1\ast^r F^2)=D(F^1)\ast^r F^2 + F^1\ast^r D(F^2)
\end{equation} for any $F^1$, $F^2\in \mathcal Q^r(\Gamma)$. 

\medskip
(b) For any $F^1$, $F^2\in \mathcal Q(\Gamma)$ and knowing from \eqref{3.12} that
$\nu_\alpha^{(m)}=\nu_\beta^{(m)}+\nu_{\alpha\beta^{-1}}^{(m)}|\beta$, we have:
\begin{equation}
\begin{array}{ll}
\phi^{(m)}(F^1\ast F^2)_\alpha& =\nu_\alpha^{(m)}\cdot \underset{\beta\in \Gamma \backslash GL_2^+(\mathbb Q)}{\sum}   F^1_\beta
\cdot (F^2_{\alpha\beta^{-1}}||\beta)  \\
& = \underset{\beta\in \Gamma \backslash GL_2^+(\mathbb Q)}{\sum}  (\nu_\beta^{(m)} \cdot F^1_\beta)
\cdot (F^2_{\alpha\beta^{-1}}||\beta) +  \underset{\beta\in \Gamma \backslash GL_2^+(\mathbb Q)}{\sum}  F^1_\beta
\cdot (\nu_{\alpha\beta^{-1}}^{(m)}|\beta)\cdot (F^2_{\alpha\beta^{-1}}||\beta) \\
& = \phi^{(m)}(F^1)\ast F^2 + \underset{\beta\in \Gamma \backslash GL_2^+(\mathbb Q)}{\sum}  F^1_\beta
\cdot  ((\nu_{\alpha\beta^{-1}}^{(m)}\cdot F^2_{\alpha\beta^{-1}})||\beta)\\
& = \phi^{(m)}(F^1)\ast F^2 + \underset{\beta\in \Gamma \backslash GL_2^+(\mathbb Q)}{\sum}  F^1_\beta
\cdot  (\phi^{(m)}(F^2)_{\alpha\beta^{-1}}||\beta)\\ 
& = \phi^{(m)}(F^1)\ast F^2 + F^1\ast \phi^{(m)}(F^2) \\
\end{array}
\end{equation} The fact that each $W_k$ is also a derivation on $\mathcal Q(\Gamma)$ now follows from a similar calculation using the fact that $W_k$ is a derivation on the quasimodular tower $\mathcal{QM}$ and that $W_k(f)||\alpha=W_k(f||\alpha)$ for any $f\in \mathcal{QM}$, $\alpha\in GL_2^+(\mathbb Q)$ (from 
\eqref{3.8}). Finally, a similar calculation may be used to verify that $\{W_k\}_{k\geq 1}$ and $\{\phi^{(m)}\}_{m\geq 1}$ are all derivations
on $\mathcal Q^r(\Gamma)$. 
\end{proof}

\medskip
We now introduce the Hopf algebra $\mathcal H$ that acts on $\mathcal Q^r(\Gamma)$. The Hopf  algebra $\mathcal H$ is the
universal enveloping algebra $\mathcal U(\mathcal L)$ of the Lie algebra $\mathcal L$ defined  by generators
$D$, $\{T_k^l\}_{k\geq 1,l\geq 0}$, $\{\phi^{(m)}\}_{m\geq 1}$ satisfying the following relations:
\begin{equation}
\begin{array}{c}
[T_k^l,T_{k'}^{l'}]=(k'-k)T_{k+k'-2}^{l+l'}
\qquad 
\mbox{$[T^l_k,D]$}=  \frac{5}{24}(k-1)T_{k-1}^{l+1}- \frac{1}{2}(k-3)T^l_{k+1}\\
\mbox{$[D,\phi^{(m)}]=0$} \qquad [T^l_k,\phi^{(m)}]=0 \qquad [\phi^{(m)},
\phi^{(m')}]=0 \\
\end{array}
\end{equation}
As such, the  coproduct $\Delta:
\mathcal H\longrightarrow \mathcal H\otimes \mathcal H$ is defined by:
\begin{equation}\label{3.34}
\Delta(D)=D\otimes 1+1\otimes D \qquad 
\Delta(T_k^l)=T_k^l\otimes 1+1\otimes T_k^l \qquad
\Delta(\phi^{(m)})=\phi^{(m)}\otimes 1 +1\otimes \phi^{(m)}
\end{equation} 
We will now show that $\mathcal H$ has a Hopf action on the algebra $\mathcal Q^r(\Gamma)$.

\medskip
\begin{thm} Let $\Gamma=\Gamma(N)$ be a principal congruence subgroup of $SL_2(\mathbb Z)$. Then, there is a Hopf action of $\mathcal H$ on
the algebra $\mathcal Q^r(\Gamma)$, i.e., 
\begin{equation}\label{3.44}
h(F^1\ast^r F^2)=\sum h_{(1)}(F^1)\ast^r h_{(2)}(F^2) \qquad\forall\textrm{ }F^1, F^2\in \mathcal Q^r(\Gamma),
\textrm{ }h\in \mathcal H
\end{equation} where $\Delta(h)=\sum h_{(1)}\otimes h_{(2)}$ for any $h\in \mathcal H$. 
\end{thm}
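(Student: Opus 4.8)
The plan is to reduce \eqref{3.44} to the derivation (Leibniz) properties already established for the generators of $\mathcal L$, and then to propagate the Hopf action identity from the generators to all of $\mathcal H$ by exploiting that the coproduct $\Delta$ is an algebra homomorphism. First I would note that the Lie action of $\mathcal L$ on $\mathcal Q(\Gamma)$ from Proposition \ref{excp}, together with the universal property of the universal enveloping algebra $\mathcal H=\mathcal U(\mathcal L)$, extends uniquely to an action of the associative algebra $\mathcal H$ on $\mathcal Q^r(\Gamma)$; this is what makes \eqref{3.44} meaningful for an arbitrary $h\in\mathcal H$. For each Lie generator $h\in\{D,T_k^l,\phi^{(m)}\}$ the coproduct \eqref{3.34} is primitive, so \eqref{3.44} specializes to the Leibniz rule
\begin{equation*}
h(F^1\ast^r F^2)=h(F^1)\ast^r F^2+F^1\ast^r h(F^2),
\end{equation*}
which is exactly the content of Propositions \ref{Prp3.6} and \ref{Prp3.2}: every generator is a derivation for $\ast^r$. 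Thus \eqref{3.44} holds for all generators.

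Next I would let $\mathcal S\subseteq\mathcal H$ be the set of all $h$ satisfying \eqref{3.44} for every $F^1,F^2\in\mathcal Q^r(\Gamma)$, and show that $\mathcal S$ is a subalgebra of $\mathcal H$. It is a linear subspace because both the $\mathcal H$-action and $\Delta$ are linear, and it contains $1$ trivially since $\Delta(1)=1\otimes 1$. For closure under products, take $h,h'\in\mathcal S$ and compute, using associativity of the $\mathcal H$-action and that $\Delta$ is a morphism of algebras,
\begin{equation*}
\begin{array}{ll}
(hh')(F^1\ast^r F^2) & = h\Big(\sum h'_{(1)}(F^1)\ast^r h'_{(2)}(F^2)\Big) \\
& = \sum \big(h_{(1)}h'_{(1)}\big)(F^1)\ast^r \big(h_{(2)}h'_{(2)}\big)(F^2) \\
& = \sum (hh')_{(1)}(F^1)\ast^r (hh')_{(2)}(F^2),
\end{array}
\end{equation*}
where the first equality applies \eqref{3.44} for $h'$, the second applies \eqref{3.44} for $h$ to each summand, and the last uses $\Delta(hh')=\Delta(h)\Delta(h')$. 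Hence $hh'\in\mathcal S$. Since $\mathcal S$ contains the generators of $\mathcal H$ and is a subalgebra, $\mathcal S=\mathcal H$, which is the claim.

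I do not expect a genuine obstacle here: the one step carrying all the weight is the closure of $\mathcal S$ under multiplication, and the only structural input beyond the already-proven derivation statements is the multiplicativity of $\Delta$, which holds by construction of the cocommutative Hopf structure on $\mathcal U(\mathcal L)$. The point requiring the most care is the second equality in the display, where applying $h$ term-by-term to the Sweedler sum $\sum h'_{(1)}(F^1)\ast^r h'_{(2)}(F^2)$ tacitly uses that $h$ acts by a single well-defined operator on $\mathcal Q^r(\Gamma)$ and that the two Sweedler decompositions recombine into the decomposition of $\Delta(hh')$; once the $\mathcal H$-module structure is fixed via the universal property as above, this bookkeeping is routine.
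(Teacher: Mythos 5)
Your proposal is correct and follows essentially the same route as the paper: the paper likewise reduces \eqref{3.44} to the primitive generators $D$, $\{T_k^l\}$, $\{\phi^{(m)}\}$ and invokes the derivation statements of Propositions \ref{Prp3.6} and \ref{Prp3.2}, leaving the propagation from generators to all of $\mathcal H$ implicit. Your explicit verification that the set of elements satisfying \eqref{3.44} is a subalgebra (via multiplicativity of $\Delta$) is exactly the standard justification the paper omits.
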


\begin{proof} In order to prove \eqref{3.44}, it suffices to verify the relation for $D$ and each of $\{T_k^l\}_{k\geq 1,l\geq 0}$,
$\{\phi^{(m)}\}_{m\geq 1}$. From Proposition \ref{Prp3.6} and Proposition \ref{Prp3.2},
we know that for $F^1$, $F^2\in \mathcal Q^r(\Gamma)$ and any $k\geq 1$, $l\geq 0$, $m\geq 1$:
\begin{equation}
\begin{array}{c}
D(F^1\ast^r F^2)=D(F^1)\ast^r F^2 +F^1\ast^r D(F^2)\\
T_k^l(F^1\ast^r F^2)=T_k^l(F^1)\ast^r F^2+F^1\ast^r T_k^l(F^2) \\ 
\phi^{(m)}(F^1\ast^r F^2)=\phi^{(m)}(F^1)\ast^r F^2+F^1\ast^r \phi^{(m)}(F^2) \\
\end{array}
\end{equation} Comparing with the expressions for the coproduct in \eqref{3.34}, it is clear that \eqref{3.44}
holds for each $h\in \mathcal H$. 
\end{proof}

\medskip

\medskip
\subsection{The operators $\bf X$, $\bf Y$ and $\bf \{\delta_n\}$ of Connes and Moscovici}

\medskip

\medskip
Let $\Gamma=\Gamma(N)$ be a congruence subgroup. In this subsection, we will show that the 
algebra $\mathcal Q(\Gamma)$ carries an action of the Hopf algebra $\mathcal H_1$ of Connes and Moscovici \cite{CM0}. The Hopf algebra $\mathcal H_1$ is part of a larger family 
$\{\mathcal H_n\}_{n\geq 1}$ of Hopf algebras
defined in \cite{CM0} and $\mathcal H_1$ is the Hopf algebra corresponding to ``codimension 
1 foliations''. As an algebra, $\mathcal H_1$ is identical to the universal enveloping algebra
$\mathcal U(\mathcal L_1)$ of the Lie algebra $\mathcal L_1$ generated by $X$, $Y$, $\{\delta_n\}_{n\geq 1}$ satisfying the commutator relations:
\begin{equation}\label{3.2.46}
[Y,X]=X \quad [X,\delta_n]=\delta_{n+1} \quad [Y,\delta_n]=n\delta_n\quad [\delta_k,\delta_l]=0\quad\forall \textrm{ }k,l,n\geq 1
\end{equation} Further, the coproduct $\Delta:\mathcal H_1\longrightarrow 
\mathcal H_1\otimes \mathcal H_1$ on $\mathcal H_1$ is determined by:
\begin{equation}\label{3.2.47qz}
\begin{array}{c}
\Delta(X)=X\otimes 1+1\otimes X+\delta_1\otimes Y \\
\Delta(Y)=Y\otimes 1+1\otimes Y\qquad \Delta(\delta_1)=\delta_1\otimes 1+1\otimes\delta_1 \\
\end{array}
\end{equation} Finally, the antipode $S:\mathcal H_1\longrightarrow \mathcal H_1$ is given by:
\begin{equation}\label{3.2.48qz}
S(X)=-X+\delta_1Y\qquad S(Y)=-Y \qquad S(\delta_1)=-\delta_1
\end{equation} Following Connes and Moscovici \cite{CM1}, we define the operators $X$ and
$Y$ on the modular tower: for any congruence subgroup $\Gamma=\Gamma(N)$, we set:
\begin{equation}
Y:\mathcal M_k(\Gamma)\longrightarrow \mathcal M_k(\Gamma)\qquad Y(f):=\frac{k}{2}f \qquad \forall
\textrm{ }f\in \mathcal M_k(\Gamma)
\end{equation} Further, the operator $X:
\mathcal M_k(\Gamma)\longrightarrow \mathcal M_{k+2}(\Gamma)$ is the Ramanujan differential operator on modular forms:
\begin{equation}
X(f):=\frac{1}{2\pi i}\frac{d}{dz}(f)-\frac{1}{12\pi i}\frac{d}{dz}(\log \Delta)\cdot Y(f) \qquad \forall
\textrm{ }f\in \mathcal M_k(\Gamma)
\end{equation} where $\Delta(z)$ is the well known modular form of weight $12$ given by:
\begin{equation}
\Delta(z)=(2\pi )^{12}q\prod_{n=1}^\infty (1-q^n)^{24}, \textrm{ }q=e^{2\pi iz}
\end{equation} We start by extending these operators to the quasimodular tower
$\mathcal{QM}$. Let $f\in \mathcal{QM}^s_k(\Gamma)$ be a quasimodular form. Then, we can
express $f=\underset{i=0}{\overset{s}{\sum}}a_i(f)G_2^i$ where $a_i(f)\in \mathcal M_{k-2i}(\Gamma)$. We set:
\begin{equation}\label{3.52}
X(f)=\sum_{i=0}^sX(a_i(f))\cdot G_2^i\qquad Y(f)=\sum_{i=0}^sY(a_i(f))\cdot G_2^i
\end{equation} From \eqref{3.52}, it is clear that $X$ and $Y$ are derivations on $\mathcal{QM}$. 

\medskip
\begin{lem}\label{L3.2.1} Let $f\in \mathcal{QM}$ be an element of the quasimodular tower. Then, for any $\alpha\in GL_2^+(\mathbb Q)$, we have:
\begin{equation}\label{3.2.53}
X(f)||\alpha = X(f||\alpha)+(\mu_{\alpha^{-1}}\cdot Y(f))||\alpha
\end{equation} where, for any $\delta\in GL_2^+(\mathbb Q)$, we set:
\begin{equation}\label{3.2.54}
\mu_\delta:=\frac{1}{12\pi i}\frac{d}{dz}\log \frac{\Delta |\delta}{\Delta} 
\end{equation} Further, we have $Y(f||\alpha)=Y(f)||\alpha$. 
\end{lem}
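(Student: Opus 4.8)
The plan is to split \eqref{3.2.53} into its two assertions and, for the one involving $X$, to reduce everything to a single modular coefficient. Writing $f=\sum_{i=0}^{s}a_i(f)G_2^{i}$ with $a_i(f)\in\mathcal M_{k-2i}(\Gamma)$ as in Theorem \ref{Th2.1}, the operators $X$ and $Y$ act only on the coefficients $a_i(f)$ by \eqref{3.52}, while $||\alpha$ slashes each coefficient at its own weight by \eqref{t2.11}; thus all three operators are diagonal in the $G_2$-expansion. Moreover $\mu_{\alpha^{-1}}$ is (meromorphic) modular of weight $2$ and depth $0$, because in the difference defining it the weight-$2$ anomaly of the logarithmic derivative of $\Delta$ cancels; hence $\mu_{\alpha^{-1}}\cdot Y(f)$ again has a $G_2$-expansion whose $i$-th coefficient $\tfrac{k-2i}{2}\mu_{\alpha^{-1}}a_i(f)$ has weight $(k-2i)+2$, the same weight at which $X(f)||\alpha$ slashes $X(a_i(f))$. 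It therefore suffices to prove, for a single modular form $g$ of weight $\ell$, the two identities $Y(g)|_{\ell}\alpha=Y(g|_{\ell}\alpha)$ and $X(g)|_{\ell+2}\alpha=X(g|_{\ell}\alpha)+(\mu_{\alpha^{-1}}\cdot Y(g))|_{\ell+2}\alpha$; summing over $i$ and reattaching the powers $G_2^{i}$ then yields \eqref{3.2.53} together with $Y(f||\alpha)=Y(f)||\alpha$.

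The identity for $Y$ is immediate: $Y$ multiplies a weight-$\ell$ form by $\ell/2$ and the slash operator preserves weight, so $Y(g)|_{\ell}\alpha=\tfrac{\ell}{2}(g|_{\ell}\alpha)=Y(g|_{\ell}\alpha)$.

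For $X$ I would exploit the factorization of the Ramanujan operator through $\Delta$. Since the logarithmic derivative $\frac{1}{2\pi i}\frac{d}{dz}\log\Delta$ is (a constant multiple of) the weight-$2$ Eisenstein series and is exactly the term subtracted in the definition of $X$, one has $X(g)=\Delta^{\ell/12}\cdot\frac{1}{2\pi i}\frac{d}{dz}\bigl(g\,\Delta^{-\ell/12}\bigr)$ for $g$ of weight $\ell$. Setting $v:=(g\,\Delta^{-\ell/12})\circ\alpha$ and using $\frac{d}{dz}(\alpha z)=\det(\alpha)/(cz+d)^{2}$, a direct application of the chain and product rules expresses both $X(g)|_{\ell+2}\alpha$ and $X(g|_{\ell}\alpha)$ as $(\Delta|_{12}\alpha)^{\ell/12}$ times explicit combinations of $v$ and $\frac{1}{2\pi i}v'$. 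Subtracting, the pure derivative terms cancel and the remainder equals $\tfrac{\ell}{2}\,(g|_{\ell}\alpha)$ times $\frac{1}{2\pi i}\frac{d}{dz}\log\frac{\Delta|\alpha}{\Delta}$; recalling \eqref{3.2.54}, this is $\tfrac{\ell}{2}\mu_{\alpha}\,(g|_{\ell}\alpha)=\mu_{\alpha}\,Y(g|_{\ell}\alpha)$, so that $X(g)|_{\ell+2}\alpha=X(g|_{\ell}\alpha)-\mu_{\alpha}\cdot Y(g|_{\ell}\alpha)$.

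It remains to convert $-\mu_{\alpha}$ into the form stated in the lemma. From $\frac{\Delta|\beta\gamma}{\Delta}=\bigl(\tfrac{\Delta|\beta}{\Delta}\circ\gamma\bigr)\cdot\tfrac{\Delta|\gamma}{\Delta}$ and the weight-$2$ behaviour of $\frac{d}{dz}\log$, one obtains the cocycle identity $\mu_{\beta\gamma}=\mu_{\beta}|_{2}\gamma+\mu_{\gamma}$; taking $\beta=\alpha^{-1}$, $\gamma=\alpha$ and using $\mu_{1}=0$ gives $\mu_{\alpha^{-1}}|_{2}\alpha=-\mu_{\alpha}$. Hence, by multiplicativity of the slash operator on weights, $-\mu_{\alpha}\,(g|_{\ell}\alpha)=(\mu_{\alpha^{-1}}|_{2}\alpha)(g|_{\ell}\alpha)=(\mu_{\alpha^{-1}}\cdot g)|_{\ell+2}\alpha$, and multiplying by $\ell/2$ produces $(\mu_{\alpha^{-1}}\cdot Y(g))|_{\ell+2}\alpha$, which is exactly the correction term; this also explains the appearance of $\alpha^{-1}$. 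The main obstacle is the middle step: one must carefully track the two non-automorphic contributions — the term from differentiating the automorphy factor $(cz+d)^{-\ell}$ and the non-invariance of $\frac{1}{2\pi i}\frac{d}{dz}\log\Delta$ — and check that they combine precisely into the logarithmic derivative of $\Delta|\alpha/\Delta$, with all remaining determinant and automorphy factors assembling correctly. Everything else is the formal diagonal reduction.
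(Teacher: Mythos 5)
Your reduction is exactly the paper's: expand $f=\sum_i a_i(f)G_2^i$, observe that $X$, $Y$ and $||\alpha$ all act coefficientwise on the $G_2$-expansion (with $||\alpha$ slashing each coefficient at its own weight), and thereby reduce both identities to the corresponding statements for a single modular form $g$ of weight $\ell$. Where you diverge is that the paper simply cites \cite[Lemma 5]{CM1} for the modular-form identity $X(g)|\alpha = X(g|\alpha)+(\mu_{\alpha^{-1}}\cdot Y(g))|\alpha$ and stops, whereas you supply a self-contained proof of it via the conjugation $X(g)=\Delta^{\ell/12}\cdot\frac{1}{2\pi i}\frac{d}{dz}\bigl(g\,\Delta^{-\ell/12}\bigr)$ together with the cocycle identity $\mu_{\beta\gamma}=\mu_\beta|\gamma+\mu_\gamma$ (hence $\mu_{\alpha^{-1}}|_2\alpha=-\mu_\alpha$) to convert the correction term into the form stated in the lemma. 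That computation is the standard proof of the Connes--Moscovici lemma and is correct, so your argument buys a proof that is independent of the citation; the one point you should pin down is the one you yourself flag at the end: with the paper's literal definition $(g|_k\alpha)(z)=(cz+d)^{-k}g(\alpha z)$ (no determinant factor), the two ``pure derivative'' terms in $X(g)|_{\ell+2}\alpha$ and $X(g|_\ell\alpha)$ differ by a factor of $\det(\alpha)$ and do not cancel on the nose; they cancel exactly under the determinant-normalized slash used in \cite{CM1}, which is the convention under which the cited Lemma 5 (and hence this lemma) holds for all of $GL_2^+(\mathbb Q)$ rather than just $SL_2(\mathbb Z)$. This is an ambiguity inherited from the paper's conventions rather than a flaw in your reasoning, but a complete write-up should state which normalization is in force.
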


\begin{proof} Following \cite[Lemma 5]{CM1}, we know that for any  $g\in \mathcal M$, we have:
\begin{equation}
X(g)|\alpha = X(g|\alpha)+(\mu_{\alpha^{-1}}\cdot Y(g))|\alpha\qquad\forall\textrm{ }\alpha\in GL_2^+(
\mathbb Q)
\end{equation} It suffices to consider the case $f\in \mathcal{QM}^s_k(\Gamma)$ for some congruence
subgroup $\Gamma$. If we express $f\in \mathcal{QM}^s_k(\Gamma)$ as $f=\underset{i=0}{\overset{s}{\sum}}a_i(f)G_2^i$ with $a_i(f)\in 
\mathcal M_{k-2i}(\Gamma)$, it follows that:
\begin{equation}\label{3.2.56}
X(a_i(f))|\alpha = X(a_i(f)|\alpha)+(\mu_{\alpha^{-1}}\cdot Y(a_i(f)))|\alpha\qquad\forall\textrm{ }\alpha\in GL_2^+(
\mathbb Q)
\end{equation} for each $0\leq i\leq s$. Combining \eqref{3.2.56} with the definitions of $X$
and $Y$ on the quasimodular tower in \eqref{3.52}, we can easily prove \eqref{3.2.53}. Finally, it
is clear from the definition of $Y$ that $Y(f||\alpha)=Y(f)||\alpha$. 

\end{proof}

\medskip
From the definition of $\mu_{\delta}$ in \eqref{3.2.54}, one may verify that (see \cite[$\S$ 3)]{CM1}):
\begin{equation}\label{3.2.57X}
\mu_{\delta_1\delta_2}=\mu_{\delta_1}|\delta_2 +\mu_{\delta_2} 
\qquad \forall\textrm{ }\delta_1,\delta_2\in GL_2^+(\mathbb Q)
\end{equation} and that $\mu_\delta=0$ for any $\delta\in SL_2(\mathbb Z)$. We now define  operators $X$, $Y$ and $\{\delta_n\}_{n\geq 1}$ on the quasimodular Hecke algebra $\mathcal Q(\Gamma)$
for some congruence subgroup $\Gamma=\Gamma(N)$. Let $F\in \mathcal Q(\Gamma)$ be a
quasimodular Hecke operator of level $\Gamma$; then we define operators:
\begin{equation}\label{3.2.58}
\begin{array}{c}
X, Y , \delta_n:\mathcal Q(\Gamma)\longrightarrow \mathcal Q(\Gamma) \\
X(F)_\alpha:=X(F_\alpha)\qquad Y(F)_\alpha:=Y(F_\alpha)\qquad 
\delta_n(F)_\alpha=X^{n-1}(\mu_{\alpha})\cdot F_\alpha \qquad \forall\textrm{ }\alpha\in 
GL_2^+(\mathbb Q) \\
\end{array}
\end{equation} We will now show that the Lie algebra
$\mathcal L_1$ with generators $X$, $Y$, $\{\delta_n\}_{n\geq 1}$ satisfying the commutator
relations in \eqref{3.2.46} acts on the algebra $\mathcal Q(\Gamma)$. Additionally, in order to give a Lie action on
the algebra $\mathcal Q^r(\Gamma)=(\mathcal Q(\Gamma),\ast^r)$, we define at this juncture the smaller Lie algebra
$\mathfrak l_1\subseteq \mathcal L_1$ with generators $X$ and $Y$ satisfying the relation
\begin{equation}\label{3.59uu}
[Y,X]=X
\end{equation} Further, we consider the Hopf algebra $\mathfrak h_1$ that arises as the universal
enveloping algebra $\mathcal U(\mathfrak l_1)$ of the Lie algebra $\mathfrak l_1$. We will
show that $\mathcal H_1$ (resp. $\mathfrak h_1$) has a Hopf action on the algebra
$\mathcal Q(\Gamma)$ (resp. $\mathcal Q^r(\Gamma))$. We start by describing the Lie actions. 

\medskip
\begin{thm}\label{P3.10} Let $\mathcal L_1$ be the Lie algebra with generators $X$, $Y$ and 
$\{\delta_n\}_{n\geq 1}$ satisfying the following commutator relations:
\begin{equation}\label{3.2.60}
[Y,X]=X \quad [X,\delta_n]=\delta_{n+1} \quad [Y,\delta_n]=n\delta_n\quad [\delta_k,\delta_l]=0\quad\forall \textrm{ }k,l,n\geq 1
\end{equation}
Then, for any given congruence subgroup $\Gamma=\Gamma(N)$  of
$SL_2(\mathbb Z)$, we have a Lie action of $\mathcal L_1$ on the module $\mathcal Q(\Gamma)$. 
\end{thm}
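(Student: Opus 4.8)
The plan is to split the claim into two independent tasks: showing that the operators $X$, $Y$ and $\{\delta_n\}_{n\geq 1}$ of \eqref{3.2.58} are well defined as maps $\mathcal Q(\Gamma)\to\mathcal Q(\Gamma)$, i.e. that they preserve the covariance condition \eqref{tt2.16}, and then checking the four families of commutator relations in \eqref{3.2.60}. For $Y$ the covariance is immediate, since Lemma \ref{L3.2.1} gives $Y(f||\gamma)=Y(f)||\gamma$, whence $Y(F)_{\alpha\gamma}=Y(F_\alpha||\gamma)=Y(F_\alpha)||\gamma=Y(F)_\alpha||\gamma$ for all $\gamma\in\Gamma$. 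For $X$ I would again use Lemma \ref{L3.2.1}: its correction term $(\mu_{\gamma^{-1}}\cdot Y(f))||\gamma$ vanishes because $\mu_\delta=0$ for $\delta\in SL_2(\mathbb Z)$ (noted after \eqref{3.2.57X}), so that $X(f)||\gamma=X(f||\gamma)$ for $\gamma\in\Gamma$ and the covariance of $X(F)$ follows exactly as for $Y$.

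The delicate case, and what I expect to be the main obstacle, is the covariance of $\delta_n$, because here one must reconcile the cocycle identity \eqref{3.2.57X}, which is stated in terms of the ordinary slash $|$, with the quasimodular operation $||$ that governs $\mathcal Q(\Gamma)$. The key observation is that $\mu_\alpha$ is in fact a genuine weight-$2$ modular form of depth $0$: it is $\tfrac{1}{12\pi i}$ times the logarithmic derivative of the weight-$0$ modular function $\Delta|\alpha/\Delta$, and the logarithmic derivative of a weight-$0$ modular function transforms with weight $2$ and no anomaly. Consequently $\mu_\alpha\in\mathcal M\subseteq\mathcal{QM}$, so that $\mu_\alpha||\gamma=\mu_\alpha|\gamma$ by \eqref{2.25}, and moreover $X^{n-1}(\mu_\alpha)$ again lies in $\mathcal M$ since the Serre-type operator $X$ preserves modularity.

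With this in hand I would argue as follows. Starting from $\delta_n(F)_{\alpha\gamma}=X^{n-1}(\mu_{\alpha\gamma})\cdot F_{\alpha\gamma}$ and using $F_{\alpha\gamma}=F_\alpha||\gamma$, the multiplicativity of $||$ in \eqref{tv2.11} reduces the required identity $\delta_n(F)_{\alpha\gamma}=\delta_n(F)_\alpha||\gamma$ to the single claim $X^{n-1}(\mu_\alpha)||\gamma=X^{n-1}(\mu_{\alpha\gamma})$ for $\gamma\in\Gamma$. Iterating the $\gamma\in SL_2(\mathbb Z)$ case $X(g)||\gamma=X(g||\gamma)$ of Lemma \ref{L3.2.1} gives $X^{n-1}(\mu_\alpha)||\gamma=X^{n-1}(\mu_\alpha||\gamma)$, while the cocycle \eqref{3.2.57X} together with $\mu_\gamma=0$ and the depth-$0$ identity $\mu_\alpha||\gamma=\mu_\alpha|\gamma$ yields $\mu_\alpha||\gamma=\mu_{\alpha\gamma}$; combining the two proves the claim, and hence the covariance of $\delta_n$.

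It then remains to verify \eqref{3.2.60} on $\mathcal Q(\Gamma)$, which by the pointwise definitions in \eqref{3.2.58} I can check at the level of $\mathcal{QM}$. The relation $[Y,X]=X$ follows from \eqref{3.52} and the fact that $X$ raises weight by $2$ while $Y$ multiplies by half the weight, both operators leaving the powers of $G_2$ fixed. Since $X$ is a derivation on $\mathcal{QM}$, a one-line computation gives $[X,\delta_n](F)_\alpha=X^n(\mu_\alpha)\cdot F_\alpha=\delta_{n+1}(F)_\alpha$, and commutativity of the pointwise product gives $[\delta_k,\delta_l]=0$ since both composites equal $X^{k-1}(\mu_\alpha)\cdot X^{l-1}(\mu_\alpha)\cdot F_\alpha$. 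Finally, as $Y$ is a derivation we get $[Y,\delta_n](F)_\alpha=Y\bigl(X^{n-1}(\mu_\alpha)\bigr)\cdot F_\alpha$, and using $Y(\mu_\alpha)=\mu_\alpha$ (as $\mu_\alpha$ has weight $2$) together with the already established $[Y,X]=X$, which forces $[Y,X^{n-1}]=(n-1)X^{n-1}$, we obtain $Y\bigl(X^{n-1}(\mu_\alpha)\bigr)=nX^{n-1}(\mu_\alpha)$, i.e. $[Y,\delta_n]=n\delta_n$. This establishes all of \eqref{3.2.60} and completes the Lie action of $\mathcal L_1$ on $\mathcal Q(\Gamma)$.
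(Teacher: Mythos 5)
Your proposal is correct and follows essentially the same route as the paper: the commutator relations are verified pointwise on $\mathcal{QM}$ exactly as in the paper's proof (the paper gets $[Y,\delta_n]=n\delta_n$ directly from $X^{n-1}(\mu_\alpha)$ having weight $2n$, while you route through $[Y,X^{n-1}]=(n-1)X^{n-1}$, which is equivalent). Your additional explicit verification that $X$, $Y$, $\delta_n$ preserve the covariance condition \eqref{tt2.16} --- via $\mu_\gamma=0$ for $\gamma\in SL_2(\mathbb Z)$, the cocycle identity \eqref{3.2.57X}, and $\mu_\alpha\in\mathcal M$ so that $||$ reduces to $|$ --- is a step the paper omits here but carries out in the twisted analogue (Proposition \ref{Prop4.3} and the proof of the $\mathcal H_1$-action on $\mathcal Q_\sigma(\Gamma)$), so it is a welcome completion rather than a divergence.
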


\begin{proof} From \cite[$\S$ 3]{CM1}, we know that for any element $g\in \mathcal M$ of the modular tower, we have $[Y,X](g)=X(g)$. Since the action of $X$ and $Y$ on the quasimodular tower
$\mathcal{QM}$ (see \eqref{3.52}) is naturally extended from their action on $\mathcal M$,
it follows that $[Y,X]=X$ on the quasimodular tower $\mathcal{QM}$. In particular, given any 
quasimodular Hecke operator $F\in \mathcal Q(\Gamma)$ and any $\alpha\in GL_2^+(\mathbb Q)$, we have $[Y,X](F_\alpha)=X(F_\alpha)$ for the element $F_\alpha\in\mathcal{QM}$. By definition,
$X(F)_\alpha=X(F_\alpha)$ and $Y(F_\alpha)=Y(F)_\alpha$ and hence $[Y,X]=X$ holds
for the action of $X$ and $Y$ on $\mathcal Q(\Gamma)$. 

\medskip
Further, since $X$ is a derivation on $\mathcal{QM}$ and $\delta_n(F)_\alpha=X^{n-1}(\mu_\alpha)\cdot F_\alpha$,
we have 
\begin{equation}
\begin{array}{ll}
[X,\delta_n](F)_\alpha & = X(X^{n-1}(\mu_{\alpha})\cdot F_\alpha) - X^{n-1}(\mu_{\alpha})\cdot X(F_\alpha)\\
& = X(X^{n-1}(\mu_{\alpha}))\cdot F_\alpha =X^n(\mu_{\alpha})\cdot F_\alpha=\delta_{n+1}(F)_\alpha \\
\end{array}
\end{equation} Similarly, since $\mu_\alpha
\in \mathcal M \subseteq \mathcal{QM}$ is of weight $2$ and $Y$ is a derivation on $\mathcal{QM}$, we have:
\begin{equation}
\begin{array}{ll}
[Y,\delta_n](F)_\alpha &=Y(X^{n-1}(\mu_{\alpha})\cdot F_\alpha) - X^{n-1}(\mu_{\alpha})\cdot Y(F_\alpha) \\
&=Y(X^{n-1}(\mu_{\alpha}))\cdot F_\alpha=nX^{n-1}(\mu_{\alpha})\cdot F_\alpha=n\delta_n(F)_\alpha \\
\end{array}
\end{equation} Finally, we can verify easily that $[\delta_k,\delta_l]=0$ for any $k$, $l\geq 1$. 
\end{proof}

\medskip

From Proposition \ref{P3.10}, it is also clear that the smaller Lie algebra $\mathfrak l_1\subseteq 
\mathcal L_1$ has a Lie action on the module $\mathcal Q(\Gamma)$. 

\medskip
\begin{lem}\label{L3.11} Let $\Gamma=\Gamma(N)$ be a congruence subgroup of $SL_2(\mathbb Z)$ and let $\mathcal Q(\Gamma)$ be the algebra of quasimodular Hecke operators of level $\Gamma$. Then, the operator
$X:\mathcal Q(\Gamma)\longrightarrow \mathcal Q(\Gamma)$ on the algebra
$(\mathcal Q(\Gamma),\ast)$ satisfies: \begin{equation} \label{3.2.63}
X(F^1\ast F^2)=X(F^1)\ast F^2 + F^1\ast X(F^2) +\delta_{1}(F^1)\ast Y(F^2) \qquad\forall\textrm{ }F^1,F^2\in \mathcal Q(\Gamma)
\end{equation} When we consider the product $\ast^r$, the operator $X$ becomes a derivation
on the algebra $\mathcal Q^r(\Gamma)=(\mathcal Q(\Gamma),\ast^r)$, i.e.:
\begin{equation} \label{3.2.64}
X(F^1\ast^r F^2)=X(F^1)\ast^r F^2 + F^1\ast^r X(F^2)  \qquad\forall\textrm{ }F^1,F^2\in \mathcal Q^r(\Gamma)
\end{equation}
\end{lem}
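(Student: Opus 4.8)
The plan is to follow the same strategy used for $D$ in Proposition \ref{Prp3.2} and for $T_k^l$ in Proposition \ref{Prp3.6}: expand $X(F^1\ast F^2)_\alpha$ directly from the convolution formula \eqref{2.11}, use that $X$ is a derivation on $\mathcal{QM}$ (by \eqref{3.52}), and then correct the resulting terms by means of the commutation rule for $X$ with the slash operation supplied by Lemma \ref{L3.2.1}. Concretely, since $X$ is a derivation, $X\big(F^1_\beta\cdot(F^2_{\alpha\beta^{-1}}||\beta)\big)$ splits as $X(F^1_\beta)\cdot(F^2_{\alpha\beta^{-1}}||\beta)+F^1_\beta\cdot X(F^2_{\alpha\beta^{-1}}||\beta)$. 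Summing the first piece over $\beta\in\Gamma\backslash GL_2^+(\mathbb Q)$ immediately yields $(X(F^1)\ast F^2)_\alpha$, so the entire content of the lemma lies in analyzing the second piece.

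For the second piece, I would rewrite Lemma \ref{L3.2.1} in the form $X(f||\beta)=X(f)||\beta-(\mu_{\beta^{-1}}\cdot Y(f))||\beta$ and apply it with $f=F^2_{\alpha\beta^{-1}}$. The term $F^1_\beta\cdot(X(F^2_{\alpha\beta^{-1}})||\beta)$ sums to $(F^1\ast X(F^2))_\alpha$, leaving the correction $-\sum_\beta F^1_\beta\cdot\big((\mu_{\beta^{-1}}\cdot Y(F^2_{\alpha\beta^{-1}}))||\beta\big)$. The crux is to convert $\mu_{\beta^{-1}}$, which appears here, into the $\mu_\beta$ that defines $\delta_1$. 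Since $||\beta$ respects products and $\mu_{\beta^{-1}}$ is a modular form, $(\mu_{\beta^{-1}})||\beta=\mu_{\beta^{-1}}|\beta$, and the cocycle relation \eqref{3.2.57X} together with $\mu_1=0$ gives $\mu_{\beta^{-1}}|\beta=-\mu_\beta$. Using also $Y(f)||\beta=Y(f||\beta)$ from Lemma \ref{L3.2.1}, the correction becomes $+\sum_\beta(\mu_\beta\cdot F^1_\beta)\cdot(Y(F^2)_{\alpha\beta^{-1}}||\beta)$. Recognizing $\mu_\beta\cdot F^1_\beta=\delta_1(F^1)_\beta$ from \eqref{3.2.58}, this is exactly $(\delta_1(F^1)\ast Y(F^2))_\alpha$, which establishes \eqref{3.2.63}.

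The identity \eqref{3.2.64} then follows formally: when the convolution is taken over $\beta\in\Gamma\backslash SL_2(\mathbb Z)$ in the product $\ast^r$ of \eqref{2.11vv}, every coset representative $\beta$ lies in $SL_2(\mathbb Z)$, so $\mu_\beta=0$ and hence $\delta_1(F^1)_\beta=\mu_\beta\cdot F^1_\beta=0$. The anomalous term therefore vanishes identically and $X$ restricts to a derivation on $\mathcal Q^r(\Gamma)$. The only genuinely delicate point is the sign bookkeeping in the previous paragraph --- in particular the appearance of $\mu_{\beta^{-1}}$ rather than $\mu_\beta$ and the sign flip $\mu_{\beta^{-1}}|\beta=-\mu_\beta$ forced by the cocycle identity --- which is precisely the mechanism that produces the $\delta_1\otimes Y$ summand in the coproduct \eqref{3.2.47qz} of $\mathcal H_1$ and mirrors the original Connes--Moscovici computation.
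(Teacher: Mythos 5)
Your proposal is correct and follows essentially the same route as the paper's own proof: expand via the derivation property of $X$ on $\mathcal{QM}$, apply Lemma \ref{L3.2.1} to commute $X$ past $||\beta$, and convert the correction term using $\mu_{\beta^{-1}}|\beta=-\mu_\beta$ (from \eqref{3.2.57X} with $\mu_1=0$) to identify it with $(\delta_1(F^1)\ast Y(F^2))_\alpha$, with the $\ast^r$ case following from $\mu_\beta=0$ for $\beta\in SL_2(\mathbb Z)$. All steps, including the sign bookkeeping you flag as delicate, match the paper's argument.
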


\begin{proof} We choose quasimodular Hecke operators $F^1$, $F^2\in \mathcal Q(\Gamma)$. Using \eqref{3.2.57X}, we also note that
\begin{equation}
0=\mu_1=\mu_{\beta^{-1}}|\beta +\mu_\beta \qquad \forall \textrm{ }\beta\in GL_2^+(\mathbb Q)
\end{equation}
We have mentioned before that $X$ is a derivation on $\mathcal{QM}$.  Then,
for any $\alpha\in GL_2^+(\mathbb Q)$, we have:
\begin{equation*}
\begin{array}{ll}
X(F^1\ast F^2)_\alpha& =X\left(\underset{\beta\in \Gamma \backslash GL_2^+(\mathbb Q)}{\sum} F^1_\beta
\cdot (F^2_{\alpha\beta^{-1}}||\beta)\right) \\
& = \underset{\beta\in \Gamma \backslash GL_2^+(\mathbb Q)}{\sum}  X\left(F^1_\beta
\cdot (F^2_{\alpha\beta^{-1}}||\beta) \right) \\
& = \underset{\beta\in \Gamma \backslash GL_2^+(\mathbb Q)}{\sum} X(F^1_\beta)\cdot (F^2_{\alpha\beta^{-1}}||\beta)+\underset{\beta\in \Gamma \backslash GL_2^+(\mathbb Q)}{\sum} F^1_\beta\cdot X(F^2_{\alpha\beta^{-1}}||\beta)\\
& =(X(F^1)\ast F^2)_\alpha + \underset{\beta\in \Gamma \backslash GL_2^+(\mathbb Q)}{\sum} F^1_\beta\cdot (X(F^2_{\alpha\beta^{-1}})||\beta)- \underset{\beta\in \Gamma \backslash GL_2^+(\mathbb Q)}{\sum} F^1_\beta\cdot ((\mu_{\beta^{-1}} \cdot Y(F^2_{\alpha\beta^{-1}}))||\beta)\\
& =(X(F^1)\ast F^2)_\alpha + (F^1\ast X(F^2))_\alpha + \underset{\beta\in \Gamma \backslash GL_2^+(\mathbb Q)}{\sum} (F^1_\beta\cdot \mu_{\beta})\cdot (Y(F^2_{\alpha\beta^{-1}})||\beta)\\
& = (X(F^1)\ast F^2)_\alpha +  (F^1\ast X(F^2))_\alpha + (\delta_1(F^1)\ast Y(F^2))_\alpha\\
\end{array}
\end{equation*} This proves \eqref{3.2.63}. In order to prove \eqref{3.2.64}, we note that 
$\mu_\beta=0$ for any $\beta\in SL_2(\mathbb Z)$. Hence, if we use the product $\ast^r$, the calculation above
reduces to
\begin{equation}\label{3.2.65}
X(F^1\ast^r F^2)=X(F^1)\ast^r F^2 + F^1\ast^r X(F^2)
\end{equation} for any $F^1$, $F^2\in \mathcal Q^r(\Gamma)$. 
\end{proof} 

\medskip
Finally, we describe the Hopf action of $\mathcal H_1$ on the algebra $(\mathcal Q(\Gamma),
\ast)$ as well as the Hopf action of $\mathfrak h_1$ on the algebra
$\mathcal Q^r(\Gamma)=(\mathcal Q(\Gamma),\ast^r)$. 

\medskip
\begin{thm} Let $\Gamma=\Gamma(N)$ be a  congruence subgroup of $SL_2(\mathbb Z)$. Then, the Hopf algebra $\mathcal H_1$ has a Hopf action on the quasimodular Hecke algebra 
$(\mathcal Q(\Gamma),\ast)$; in other words, we have:
\begin{equation}\label{3.2.66} 
h(F^1\ast F^2)=\sum h_{(1)}(F^1) \otimes h_{(2)}(F^2)\qquad \forall\textrm{ }h\in \mathcal H_1,
F^1,F^2\in \mathcal Q(\Gamma)
\end{equation} where the coproduct $\Delta:\mathcal H_1\longrightarrow \mathcal H_1
\otimes \mathcal H_1$ is given by $\Delta(h)=\sum h_{(1)}\otimes h_{(2)}$ for any
$h\in \mathcal H_1$. Similarly, there exists a Hopf action of the Hopf algebra
$\mathfrak h_1$ on the algebra $\mathcal Q^r(\Gamma)=(\mathcal Q(\Gamma),\ast^r)$. 
\end{thm}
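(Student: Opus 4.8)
The plan is to use the standard principle that, since $\mathcal H_1=\mathcal U(\mathcal L_1)$ is generated as an algebra by $X$, $Y$ and $\delta_1$ (the higher $\delta_n$ being recovered from $\delta_{n+1}=[X,\delta_n]$), and since the coproduct $\Delta$ is an algebra homomorphism, the Hopf action identity \eqref{3.2.66} need only be checked on these three generators, the module structure over $\mathcal H_1$ being the extension of the Lie action of Proposition \ref{P3.10}. The crucial structural point is closure under products: if $h,h'\in\mathcal H_1$ both satisfy \eqref{3.2.66}, then writing $\Delta(h)=\sum h_{(1)}\otimes h_{(2)}$ and $\Delta(h')=\sum h'_{(1)}\otimes h'_{(2)}$, one has
\begin{equation}
(hh')(F^1\ast F^2)=h\Big(\sum h'_{(1)}(F^1)\ast h'_{(2)}(F^2)\Big)=\sum (h_{(1)}h'_{(1)})(F^1)\ast(h_{(2)}h'_{(2)})(F^2),
\end{equation}
which is precisely \eqref{3.2.66} for $hh'$, since $\Delta(hh')=\sum h_{(1)}h'_{(1)}\otimes h_{(2)}h'_{(2)}$. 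As the unit trivially satisfies the identity, all of $\mathcal H_1$ does once the generators do.

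For $X$, the required identity is exactly \eqref{3.2.63} of Lemma \ref{L3.11}, matching $\Delta(X)=X\otimes 1+1\otimes X+\delta_1\otimes Y$ from \eqref{3.2.47qz}. For $Y$, I would run the computation of Lemma \ref{L3.11} verbatim, now using that $Y$ is a derivation on $\mathcal{QM}$ together with $Y(f||\beta)=Y(f)||\beta$ from Lemma \ref{L3.2.1}; since $Y$ commutes with $||\beta$, no correction term appears and one obtains $Y(F^1\ast F^2)=Y(F^1)\ast F^2+F^1\ast Y(F^2)$, matching $\Delta(Y)=Y\otimes 1+1\otimes Y$. For $\delta_1$, recall $\delta_1(F)_\alpha=\mu_\alpha\cdot F_\alpha$; inserting the cocycle identity $\mu_\alpha=\mu_{\alpha\beta^{-1}}|\beta+\mu_\beta$ from \eqref{3.2.57X} into $\mu_\alpha\cdot(F^1\ast F^2)_\alpha$, the summand carrying $\mu_\beta$ yields $(\delta_1(F^1)\ast F^2)_\alpha$, while for the summand carrying $\mu_{\alpha\beta^{-1}}|\beta$ I would use that $\mu_{\alpha\beta^{-1}}$ is modular (so $|$ and $||$ agree on it) together with the multiplicativity of $||$ in \eqref{tv2.11} to rewrite $(\mu_{\alpha\beta^{-1}}|\beta)\cdot(F^2_{\alpha\beta^{-1}}||\beta)=(\delta_1(F^2)_{\alpha\beta^{-1}})||\beta$, producing $(F^1\ast\delta_1(F^2))_\alpha$. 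Thus $\delta_1$ is a derivation, matching $\Delta(\delta_1)=\delta_1\otimes 1+1\otimes\delta_1$, and \eqref{3.2.66} follows on all of $\mathcal H_1$.

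For the second assertion, $\mathfrak h_1=\mathcal U(\mathfrak l_1)$ is generated as an algebra by $X$ and $Y$, both primitive, so by the same closure argument it suffices to check that $X$ and $Y$ are derivations for $\ast^r$. The case of $Y$ is the computation above, unchanged under $\ast^r$; the case of $X$ is exactly \eqref{3.2.64} of Lemma \ref{L3.11}, where summing over $\Gamma\backslash SL_2(\mathbb Z)$ annihilates the correction term because $\mu_\beta=0$ for $\beta\in SL_2(\mathbb Z)$ by \eqref{3.2.57X}. The only delicate point in the whole argument is the closure step, where one must track the iterated coproduct accurately in Sweedler notation; but coassociativity and the fact that $\Delta$ is an algebra map render this routine, so no serious obstacle arises beyond careful bookkeeping.
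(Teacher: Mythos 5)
Your proposal is correct and follows essentially the same route as the paper: reduce to the generators $X$, $Y$, $\delta_1$ (the paper leaves the closure-under-products step implicit, while you spell it out), invoke Lemma \ref{L3.11} for $X$, the cocycle identity \eqref{3.2.57X} for $\delta_1$ exactly as in the paper's display \eqref{3.2.67}, and the fact that $Y$ is a derivation commuting with $||\beta$ for $Y$; the $\mathfrak h_1$ case on $\mathcal Q^r(\Gamma)$ is likewise handled identically via \eqref{3.2.64}. No substantive differences.
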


\begin{proof} In order to prove \eqref{3.2.66}, it suffices to check the relation for
$X$, $Y$ and $\delta_1\in \mathcal H_1$. For the element $X\in \mathcal H_1$, this is already the result
of Lemma \ref{L3.11}. Now, for any $F^1$, $F^2\in \mathcal Q(\Gamma)$ and $\alpha\in GL_2^+(\mathbb Q)$, we have:
\begin{equation}\label{3.2.67}
\begin{array}{ll}
\delta_1(F^1\ast F^2)_\alpha& = \mu_\alpha\cdot\left(\underset{\beta\in \Gamma\backslash GL_2^+(\mathbb Q)}{\sum}F^1_\beta
\cdot (F^2_{\alpha\beta^{-1}}||\beta)\right)\\
&=\underset{\beta\in \Gamma\backslash GL_2^+(\mathbb Q)}{\sum}(\mu_\beta\cdot F^1_\beta)\cdot (F^2_{\alpha\beta^{-1}}||\beta) + \underset{\beta\in \Gamma\backslash GL_2^+(\mathbb Q)}{\sum}
F^1_\beta\cdot ((\mu_{\alpha\beta^{-1}}\cdot F^2_{\alpha\beta^{-1}})||\beta)\\
& = (\delta_1(F^1)\ast F^2)_\alpha + (F^1\ast \delta_1(F^2))_\alpha \\
\end{array}
\end{equation} Further, using the fact that $Y$ is a derivation on $\mathcal{QM}$ and
$Y(f||\alpha)=Y(f)||\alpha$ for any $f\in \mathcal{QM}$, $\alpha\in GL_2^+(\mathbb Q)$, we can
easily verify the relation \eqref{3.2.66} for the element $Y\in \mathcal H_1$. This proves
\eqref{3.2.66} for all $h\in \mathcal H_1$. 

\medskip
Finally, in order to demonstrate the Hopf action of $\mathfrak h_1$ on $\mathcal Q^r(\Gamma)$,
we need to check that:
\begin{equation}
X(F^1\ast^r F^2)=X(F^1)\ast^r F^2 + F^1\ast^r X(F^2)\qquad Y(F^1\ast^r F^2)=Y(F^1)\ast^r F^2 + F^1\ast^r Y(F^2)
\end{equation} for any $F^1$, $F^2\in \mathcal Q^r(\Gamma)$. The relation for $X$ has already been proved in \eqref{3.2.65}. The relation for $Y$
is again an easy consequence of the fact that $Y$ is a derivation on $\mathcal{QM}$ and
$Y(f||\alpha)=Y(f)||\alpha$ for any $f\in \mathcal{QM}$, $\alpha\in GL_2^+(\mathbb Q)$. 
\end{proof}

\medskip

\medskip
\section{Twisted Quasimodular Hecke operators}

\medskip

\medskip
Let $\Gamma=\Gamma(N)$ be a principal congruence subgroup of $SL_2(\mathbb Z)$. For any
$\sigma\in SL_2(\mathbb Z)$, we have developed the  theory of $\sigma$-twisted modular Hecke operators in \cite{AB1}. 
In this section, we introduce and study the collection $\mathcal Q_\sigma(\Gamma)$ of quasimodular Hecke operators
of level $\Gamma$ twisted by $\sigma$. When $\sigma=1$, $\mathcal Q_\sigma(\Gamma)$ coincides with the algebra 
$\mathcal Q(\Gamma)$ of quasimodular Hecke operators. In general, we will show that $\mathcal Q_\sigma(\Gamma)$ is a right
$\mathcal Q(\Gamma)$-module and carries a pairing:
\begin{equation}\label{4.1pb}
(\_\_,\_\_):\mathcal Q_\sigma(\Gamma)\otimes \mathcal Q_\sigma(\Gamma)\longrightarrow 
\mathcal Q_\sigma(\Gamma)
\end{equation} We recall from Section 3 the Lie algebra $\mathfrak{l}_1$ with two generators $Y$, $X$ satisfying 
$[Y,X]=X$. If we let $\mathfrak{h}_1$ be the Hopf algebra that is  the universal enveloping algebra 
of $\mathfrak{l}_1$, we show in Section 4.1 that the pairing in \eqref{4.1pb} on $\mathcal Q_\sigma(\Gamma)$ carries a
``Hopf action'' of $\mathfrak{h}_1$. In other words, we have:
\begin{equation}
h(F^1,F^2)=\sum (h_{(1)}(F^1),h_{(2)}(F^2))\qquad\forall \textrm{ } h\in \mathfrak{h}_1, \textrm{ }F^1,F^2\in \mathcal Q_\sigma(\Gamma)
\end{equation} where the coproduct $\Delta:\mathfrak{h}_1\longrightarrow \mathfrak{h}_1\otimes \mathfrak{h}_1$ is given by
$\Delta(h)=\sum h_{(1)}\otimes h_{(2)}$ for any $h\in \mathfrak{h}_1$.  In Section 4.2, we consider operators
$X_\tau:\mathcal Q_\sigma(\Gamma)\longrightarrow \mathcal Q_{\tau\sigma}(\Gamma)$ for any $\tau$, $\sigma\in SL_2(\mathbb Z)$. In particular, we consider operators
acting between the levels of the graded module:
\begin{equation}
\mathbb Q_\sigma(\Gamma)=\bigoplus_{m\in \mathbb Z}\mathcal Q_{\sigma(m)}(\Gamma)
\end{equation} where for any $\sigma \in SL_2(\mathbb Z)$, we set $\sigma(m)
=\begin{pmatrix} 1 & m \\ 0 & 1 \\ \end{pmatrix}\cdot \sigma$. Further, we generalize the  pairing 
on $\mathcal Q_\sigma(\Gamma)$ in \eqref{4.1pb} to a pairing:
\begin{equation}\label{4.4pb}
(\_\_,\_\_):\mathcal Q_{\sigma(m)}(\Gamma)\otimes \mathcal Q_{\sigma(n)}(\Gamma)\longrightarrow \mathcal Q_{\sigma(m+n)}(\Gamma)
\qquad \forall\textrm{ }m,n\in \mathbb Z
\end{equation} We show that the pairing in \eqref{4.4pb} is a special case of a more general pairing 
\begin{equation}
(\_\_,\_\_):\mathcal Q_{\tau_1\sigma}(\Gamma)\otimes \mathcal Q_{\tau_2\sigma}(\Gamma)\longrightarrow \mathcal Q_{\tau_1\tau_2\sigma}(\Gamma)
\end{equation} where $\tau_1$, $\tau_2$  are commuting matrices in $SL_2(\mathbb Z)$. From \eqref{4.4pb}, it is clear
that we have a graded pairing on 
$\mathbb Q_{\sigma}(\Gamma)$ that extends the pairing on $\mathcal Q_{\sigma}(\Gamma)$. 
Finally, we consider 
the Lie algebra $\mathfrak{l}_{\mathbb Z}$ with generators $\{Z,X_n|n\in \mathbb Z\}$ satisfying the 
commutator relations:
\begin{equation}
[Z,X_n]=(n+1)X_n\qquad [X_n,X_{n'}]=0\qquad \forall\textrm{ }n,n'\in \mathbb Z
\end{equation} Then, for $n=0$, we have $[Z,X_0]=X_0$ and hence the Lie algebra $\mathfrak{l}_{\mathbb Z}$ contains
the Lie algebra $\mathfrak{l}_1$ acting on $\mathcal Q_\sigma(\Gamma)$. Then, if we let $\mathfrak{h}_{\mathbb Z}$ be the Hopf algebra that is the universal enveloping
algebra of $\mathfrak{l}_{\mathbb Z}$, we show that $\mathfrak{h}_{\mathbb Z}$ has a Hopf action on the 
pairing on $\mathbb Q_\sigma(\Gamma)$. In other words, for any $F^1$, $F^2\in \mathbb Q_{\sigma}(\Gamma)$, we have
\begin{equation}
h(F^1,F^2)=\sum (h_{(1)}(F^1),h_{(2)}(F^2))\qquad \forall\textrm{ }h\in \mathfrak{h}_{\mathbb Z}
\end{equation} where the coproduct $\Delta:\mathfrak h_{\mathbb Z}
\longrightarrow \mathfrak h_{\mathbb Z}\otimes \mathfrak h_{\mathbb Z}$ is defined
by setting $\Delta(h):=\sum h_{(1)}\otimes h_{(2)}$ for each $h\in \mathfrak h_{\mathbb Z}$. 

\medskip

\medskip

\subsection{The pairing on $\mathcal Q_\sigma(\Gamma)$ and Hopf action}

\medskip

\medskip
Let $\sigma\in SL_2(\mathbb Z)$ and let $\Gamma=\Gamma(N)$ be a principal congruence subgroup
of $SL_2(\mathbb Z)$. We start by defining the collection 
$\mathcal Q_\sigma(\Gamma)$ of  quasimodular Hecke operators of level $\Gamma$ 
twisted by $\sigma$. When $\sigma=1$, this reduces to the definition of
$\mathcal Q(\Gamma)$. 

\medskip
\begin{defn} Choose $\sigma\in SL_2(\mathbb Z)$ and let $\Gamma=\Gamma(N)$ be a principal congruence
subgroup of $SL_2(\mathbb Z)$. A $\sigma$-twisted quasimodular Hecke operator
$F$ of level $\Gamma$ is a function of finite support:
\begin{equation}\label{4.1}
F:\Gamma\backslash GL_2^+(\mathbb Q)\longrightarrow \mathcal{QM}\qquad
\Gamma\alpha\mapsto F_\alpha\in \mathcal{QM}
\end{equation} such that:
\begin{equation}\label{4.2}
F_{\alpha\gamma}=F_\alpha||\sigma\gamma\sigma^{-1}\qquad\forall\textrm{ }\gamma\in\Gamma
\end{equation} We denote by $\mathcal Q_\sigma(\Gamma)$ the collection of $\sigma$-twisted
quasimodular Hecke operators of level $\Gamma$. 
\end{defn}

\medskip
\begin{thm} Let $\Gamma=\Gamma(N)$ be a principal congruence subgroup of $SL_2(\mathbb Z)$
and choose some $\sigma\in SL_2(\mathbb Z)$. Then there exists a pairing:
\begin{equation}\label{4.3}
(\_\_,\_\_):\mathcal Q_\sigma(\Gamma)\otimes \mathcal Q_\sigma(\Gamma)
\longrightarrow \mathcal Q_\sigma(\Gamma)
\end{equation} defined as follows:
\begin{equation}\label{4.4}
(F^1,F^2)_{\alpha}:=\underset{\beta\in \Gamma\backslash SL_2(\mathbb Z)}{\sum}
F_{\beta\sigma}^1\cdot (F^2_{\alpha\sigma^{-1}\beta^{-1}}||\sigma\beta)
\qquad \forall\textrm{ }F^1, F^2\in \mathcal Q_\sigma(\Gamma),\alpha\in GL_2^+(\mathbb Q)
\end{equation}
\end{thm}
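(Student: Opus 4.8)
The assertion to prove is that the right-hand side of \eqref{4.4} is a genuine element of $\mathcal Q_\sigma(\Gamma)$, so concretely there are three things to check: that the sum over $\beta\in\Gamma\backslash SL_2(\mathbb Z)$ does not depend on the chosen coset representatives, that $(F^1,F^2)$ again satisfies the twisted covariance \eqref{4.2}, and that it has finite support. Throughout I would use two structural inputs: the identities $(f\cdot g)||\delta=(f||\delta)(g||\delta)$ and $f||(\delta\delta')=(f||\delta)||\delta'$ from \eqref{tv2.11}; and the fact that $\Gamma=\Gamma(N)$ is a \emph{normal} subgroup of $SL_2(\mathbb Z)$, so that $\sigma\gamma\sigma^{-1}\in\Gamma$ and more generally $\beta\sigma\gamma\sigma^{-1}\beta^{-1}\in\Gamma$ for all $\beta,\sigma\in SL_2(\mathbb Z)$, $\gamma\in\Gamma$. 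Note also that the index set $\Gamma\backslash SL_2(\mathbb Z)$ is finite, so no convergence issue arises.

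\textbf{Independence of representatives.} The plan is to replace a representative $\beta$ by $\gamma\beta$ with $\gamma\in\Gamma$ and check the summand in \eqref{4.4} is unchanged. For the first factor, $F^1_{\gamma\beta\sigma}=F^1_{\beta\sigma}$ since $\Gamma\gamma\beta\sigma=\Gamma\beta\sigma$ and $F^1$ depends only on the left coset. For the second factor, I would write $F^2_{\alpha\sigma^{-1}(\gamma\beta)^{-1}}=F^2_{(\alpha\sigma^{-1}\beta^{-1})\gamma^{-1}}=F^2_{\alpha\sigma^{-1}\beta^{-1}}||\sigma\gamma^{-1}\sigma^{-1}$ by \eqref{4.2}, then apply $||\sigma\gamma\beta$; using $f||(\delta\delta')=(f||\delta)||\delta'$ the factor $\sigma\gamma^{-1}\sigma^{-1}$ collapses against $\sigma\gamma\beta$ (their product is $\sigma\beta$), leaving exactly $F^2_{\alpha\sigma^{-1}\beta^{-1}}||\sigma\beta$. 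Hence the summand is invariant and the sum is well defined.

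\textbf{A preliminary fact.} A key observation I would isolate first is that for any $\mu\in SL_2(\mathbb Z)$ the value $F_\mu$ is already of level $\Gamma$, i.e. $F_\mu||\delta=F_\mu$ for every $\delta\in\Gamma$. Indeed, writing $\delta=\sigma\delta'\sigma^{-1}$ with $\delta'\in\Gamma$ (possible since $\sigma\Gamma\sigma^{-1}=\Gamma$), covariance gives $F_\mu||\delta=F_\mu||\sigma\delta'\sigma^{-1}=F_{\mu\delta'}$; and $\Gamma\mu\delta'=\Gamma\mu$ because $\mu\delta'\mu^{-1}\in\Gamma$, so left-invariance yields $F_{\mu\delta'}=F_\mu$. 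This is the step that makes the twisting by $\sigma$ interact correctly with the level-$\Gamma$ structure, and it is where normality is used most essentially.

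\textbf{Covariance and finiteness.} To verify \eqref{4.2} for $(F^1,F^2)$ I would compute both $(F^1,F^2)_{\alpha\gamma}$ and $(F^1,F^2)_\alpha||\sigma\gamma\sigma^{-1}$ and match them termwise. On the right-hand side, distributing $||\sigma\gamma\sigma^{-1}$ over the product and using $(f||\delta)||\delta'=f||(\delta\delta')$, the $F^2$-factor becomes $F^2_{\alpha\sigma^{-1}\beta^{-1}}||\sigma\beta\sigma\gamma\sigma^{-1}$, while the $F^1$-factor $F^1_{\beta\sigma}||\sigma\gamma\sigma^{-1}$ equals $F^1_{\beta\sigma}$ by the preliminary fact (as $\beta\sigma\in SL_2(\mathbb Z)$ and $\sigma\gamma\sigma^{-1}\in\Gamma$). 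On the left-hand side, I would rewrite the argument as $\alpha\gamma\sigma^{-1}\beta^{-1}=(\alpha\sigma^{-1}\beta^{-1})\cdot(\beta\sigma\gamma\sigma^{-1}\beta^{-1})$; since the right factor lies in $\Gamma$, \eqref{4.2} converts $F^2_{\alpha\gamma\sigma^{-1}\beta^{-1}}$ into $F^2_{\alpha\sigma^{-1}\beta^{-1}}||\sigma(\beta\sigma\gamma\sigma^{-1}\beta^{-1})\sigma^{-1}$, and applying the outstanding $||\sigma\beta$ and simplifying the matrix product reproduces precisely $F^2_{\alpha\sigma^{-1}\beta^{-1}}||\sigma\beta\sigma\gamma\sigma^{-1}$ with $F^1$-factor $F^1_{\beta\sigma}$. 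Thus the two sides agree term by term. Finiteness of support is then immediate: $(F^1,F^2)_\alpha$ can be nonzero only when some $F^1_{\beta\sigma}\neq0$ (finitely many cosets $\beta$) and $F^2_{\alpha\sigma^{-1}\beta^{-1}}\neq0$, which for each such $\beta$ confines $\Gamma\alpha$ to a finite set. The hard part here will be purely bookkeeping: tracking the repeated conjugations by $\sigma$ and invoking normality at each stage so that the stray $\Gamma$-elements act trivially via the preliminary fact; once that fact is in hand, the matching of the two sides is forced.
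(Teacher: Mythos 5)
Your argument is correct, but the covariance step is handled by a genuinely different mechanism than the paper's. The paper reindexes the sum: it substitutes $\delta=\beta\sigma\gamma^{-1}\sigma^{-1}$ (right multiplication permutes the left cosets $\Gamma\backslash SL_2(\mathbb Z)$), applies \eqref{4.2} to $F^1_{\delta\sigma\gamma}$, observes that the $F^2$-term becomes $(F^2_{\alpha\sigma^{-1}\delta^{-1}}||\sigma\delta)||\sigma\gamma\sigma^{-1}$, and then pulls $||\sigma\gamma\sigma^{-1}$ out of the whole sum using \eqref{tv2.11}. Notably, this route never invokes normality of $\Gamma$ in $SL_2(\mathbb Z)$. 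You instead keep the index fixed and match summands one by one, which forces you to show separately that the $F^1$-factor absorbs $||\sigma\gamma\sigma^{-1}$; your preliminary fact that $F_\mu||\delta=F_\mu$ for $\mu\in SL_2(\mathbb Z)$, $\delta\in\Gamma$ is correct and does exactly this, but both it and your decomposition $\alpha\gamma\sigma^{-1}\beta^{-1}=(\alpha\sigma^{-1}\beta^{-1})(\beta\sigma\gamma\sigma^{-1}\beta^{-1})$ lean essentially on $\Gamma(N)$ being normal. Since $\Gamma(N)$ is normal this is perfectly valid, and your lemma is a reusable structural observation (it is essentially why the $\ast^r$-type sums over $SL_2(\mathbb Z)$ behave so well throughout the paper); the paper's reindexing is the more economical and more general argument, as it would survive for a non-normal level subgroup. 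Your explicit treatment of finiteness of the sum and of the support, which the paper leaves implicit, is a small bonus; the only item you gloss over is the (one-line) check that $(F^1,F^2)_{\gamma\alpha}=(F^1,F^2)_{\alpha}$, which follows immediately from left-invariance of $F^2$.
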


\begin{proof} We choose $\gamma\in \Gamma$. Then, for any $\beta\in SL_2(\mathbb Z)$, we have:
\begin{equation}
F^1_{\gamma\beta\sigma}=F^1_{\beta\sigma}\qquad 
F^2_{\alpha\sigma^{-1}\beta^{-1}\gamma^{-1}}||\sigma\gamma\beta = F^2_{\alpha\sigma^{-1}\beta^{-1}}||\sigma\gamma^{-1}\sigma^{-1}\sigma\gamma\beta= 
F^2_{\alpha\sigma^{-1}\beta^{-1}}||\sigma\beta
\end{equation} and hence the sum in \eqref{4.4} is well defined, i.e., it does not depend
on the choice of coset representatives. We have to show that $(F^1,F^2)\in \mathcal Q_\sigma(\Gamma)$. For this, we first note that $F^2_{\gamma\alpha\sigma^{-1}\beta^{-1}}= F^2_{\alpha\sigma^{-1}\beta^{-1}}$ for any $\gamma\in \Gamma$ and hence from the expression
in \eqref{4.4}, it follows that $(F^1,F^2)_{\gamma\alpha}=(F^1,F^2)_\alpha$. On the other hand, for any $\gamma\in \Gamma$,
we can write:
\begin{equation}\label{4.6}
(F^1,F^2)_{\alpha\gamma}=\underset{\beta\in \Gamma\backslash SL_2(\mathbb Z)}{\sum}
F^1_{\beta\sigma}\cdot (F^2_{\alpha\gamma\sigma^{-1}\beta^{-1}}||\sigma\beta)
\end{equation} We put $\delta=\beta\sigma\gamma^{-1}\sigma^{-1}$. It is clear that as $\beta$ runs through all the coset representatives of $\Gamma$ in $SL_2(\mathbb Z)$, so does $\delta$. From \eqref{4.2}, we know that $F^1_{\delta\sigma\gamma}=F^1_{\delta\sigma}||\sigma\gamma\sigma^{-1}$. Then, we can rewrite \eqref{4.6} as:
\begin{equation}
\begin{array}{ll}
(F^1,F^2)_{\alpha\gamma} & = \underset{\delta\in \Gamma\backslash SL_2(\mathbb Z)}{\sum}
F^1_{\delta\sigma\gamma}\cdot (F^2_{\alpha\sigma^{-1}\delta^{-1}}||\sigma\delta\sigma\gamma\sigma^{-1})\\
& = \underset{\delta\in \Gamma\backslash SL_2(\mathbb Z)}{\sum}
(F^1_{\delta\sigma}||\sigma\gamma\sigma^{-1})\cdot ((F^2_{\alpha\sigma^{-1}\delta^{-1}}||\sigma\delta)||\sigma\gamma\sigma^{-1})\\
& = \left(\underset{\delta\in \Gamma\backslash SL_2(\mathbb Z)}{\sum}
F_{\delta\sigma}^1\cdot (F^2_{\alpha\sigma^{-1}\delta^{-1}}||\sigma\delta)\right)||(\sigma\gamma\sigma^{-1})\\
&=(F^1,F^2)_\alpha||\sigma\gamma\sigma^{-1}\\
\end{array}
\end{equation} It follows that $(F^1,F^2)\in \mathcal Q_\sigma(\Gamma)$ and hence we have a well defined pairing $(\_\_,\_\_):\mathcal Q_\sigma(\Gamma)\otimes \mathcal Q_\sigma(\Gamma)
\longrightarrow \mathcal Q_\sigma(\Gamma)$. 

\end{proof}

\medskip
We now consider the Hopf algebra $\mathfrak h_1$ defined in Section 3.2. By definition, $\mathfrak h_1$ is the universal enveloping algebra of the Lie algebra $\mathfrak l_1$ with two generators
$X$ and $Y$ satisfying $[Y,X]=X$. We will now show that $\mathfrak l_1$ has a Lie action 
on $\mathcal Q_\sigma(\Gamma)$ and that $\mathfrak h_1$ has a ``Hopf action'' with respect to the pairing on $\mathcal Q_\sigma(\Gamma)$. 

\medskip
\begin{thm}\label{Prop4.3} Let $\sigma\in SL_2(\mathbb Z)$ and let $\Gamma=\Gamma(N)$ be a principal
congruence subgroup of $SL_2(\mathbb Z)$. 

\medskip
(a) The Lie algebra $\mathfrak l_1$ has a Lie action on $\mathcal Q_\sigma(\Gamma)$ defined
by: 
\begin{equation}\label{4.8}
X(F)_\alpha:=X(F_\alpha) \qquad Y(F)_\alpha:=Y(F_\alpha)\qquad\forall\textrm{ }F\in \mathcal Q_\sigma(\Gamma), \alpha\in GL_2^+(\mathbb Q)
\end{equation} 

\medskip
(b) The universal enveloping algebra $\mathfrak h_1$ of the Lie algebra $\mathfrak l_1$ has a ``Hopf action'' with respect to the pairing on $\mathcal Q_\sigma(\Gamma)$; in other words, we have:
\begin{equation}\label{4.9}
h(F^1,F^2)=\sum (h_{(1)}(F^1),h_{(2)}(F^2))\qquad\forall\textrm{ }F^1,F^2\in \mathcal Q_\sigma(\Gamma), h\in \mathfrak h_1
\end{equation} where the coproduct $\Delta:\mathfrak h_1\longrightarrow 
\mathfrak h_1\otimes \mathfrak h_1$ is given by $\Delta(h)=\sum  h_{(1)}\otimes 
h_{(2)}$ for any $h\in \mathfrak h_1$. 
\end{thm}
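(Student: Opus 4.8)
The plan is to treat the two parts separately, both resting on the single structural fact that $\mu_\delta=0$ whenever $\delta\in SL_2(\mathbb Z)$, combined with the transformation rules for $X$ and $Y$ recorded in Lemma \ref{L3.2.1}.

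For part (a), I would first check that $X$ and $Y$ genuinely preserve $\mathcal Q_\sigma(\Gamma)$, i.e.\ that $X(F)$ and $Y(F)$ satisfy the twisted covariance \eqref{4.2}. Fix $\gamma\in\Gamma$ and $\alpha\in GL_2^+(\mathbb Q)$. Since $X$ acts pointwise, $X(F)_{\alpha\gamma}=X(F_{\alpha\gamma})=X(F_\alpha||\sigma\gamma\sigma^{-1})$, so it suffices to show that $X(F_\alpha||\sigma\gamma\sigma^{-1})=X(F_\alpha)||\sigma\gamma\sigma^{-1}=X(F)_\alpha||\sigma\gamma\sigma^{-1}$. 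This is exactly Lemma \ref{L3.2.1} applied with the matrix $\sigma\gamma\sigma^{-1}$: the correction term carries the factor $\mu_{(\sigma\gamma\sigma^{-1})^{-1}}=\mu_{\sigma\gamma^{-1}\sigma^{-1}}$, and since $\sigma\gamma^{-1}\sigma^{-1}\in SL_2(\mathbb Z)$ this factor vanishes. The case of $Y$ is simpler still, as $Y(f||\alpha)=Y(f)||\alpha$ has no correction term. Finally, because $X$ and $Y$ act entrywise on $F_\alpha\in\mathcal{QM}$ and the relation $[Y,X]=X$ already holds on $\mathcal{QM}$ (as used in the proof of Proposition \ref{P3.10}), the same relation holds for the induced operators on $\mathcal Q_\sigma(\Gamma)$, yielding the Lie action of $\mathfrak l_1$.

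For part (b), the coproduct on $\mathfrak h_1=\mathcal U(\mathfrak l_1)$ makes $X$ and $Y$ primitive, so \eqref{4.9} reduces to verifying that each of $X$ and $Y$ is a derivation for the pairing:
\begin{equation*}
X(F^1,F^2)=(X(F^1),F^2)+(F^1,X(F^2)),\qquad Y(F^1,F^2)=(Y(F^1),F^2)+(F^1,Y(F^2)).
\end{equation*}
I would expand $X(F^1,F^2)_\alpha$ by applying $X$ term by term to the defining sum \eqref{4.4}, using that $X$ is a derivation on $\mathcal{QM}$. The summand differentiating $F^1_{\beta\sigma}$ reassembles into $(X(F^1),F^2)_\alpha$ directly. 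In the summand differentiating $F^2_{\alpha\sigma^{-1}\beta^{-1}}||\sigma\beta$, Lemma \ref{L3.2.1} produces $X(F^2_{\alpha\sigma^{-1}\beta^{-1}})||\sigma\beta$ together with a correction term proportional to $\mu_{(\sigma\beta)^{-1}}$; because $\beta$ and $\sigma$ both lie in $SL_2(\mathbb Z)$, so does $(\sigma\beta)^{-1}$, and the correction vanishes, leaving exactly $(F^1,X(F^2))_\alpha$. The identity for $Y$ follows by the same bookkeeping with no correction term at all. It then remains to promote the derivation property from the generators to all of $\mathfrak h_1$: the set of $h\in\mathfrak h_1$ satisfying \eqref{4.9} contains $1$ and is closed under multiplication, since for $h,h'$ both satisfying \eqref{4.9} one applies $h$ to $h'(F^1,F^2)$ and uses that $\Delta$ is an algebra homomorphism to deduce the identity for $hh'$. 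As $X$ and $Y$ generate $\mathfrak h_1$ as an algebra, \eqref{4.9} holds for every $h$.

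I expect the genuine content to be localized in the two places where the correction term $\mu_{(\,\cdot\,)^{-1}}\cdot Y(\,\cdot\,)$ appears and is killed by the membership of the relevant matrix in $SL_2(\mathbb Z)$. This is precisely why the pairing \eqref{4.4} is summed over $\Gamma\backslash SL_2(\mathbb Z)$ rather than $\Gamma\backslash GL_2^+(\mathbb Q)$, exactly mirroring the role of the product $\ast^r$ in Section 3; everything else is routine verification.
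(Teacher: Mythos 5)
Your proposal is correct and follows essentially the same route as the paper: verify the covariance of $X(F)$ and $Y(F)$ and the relation $[Y,X]=X$ pointwise for part (a), and for part (b) reduce to the primitive generators $X$, $Y$ and expand the pairing, with the correction term $\mu_{(\sigma\beta)^{-1}}\cdot Y(\cdot)$ killed because $\sigma\beta\in SL_2(\mathbb Z)$. Your explicit checks of well-definedness in (a) and of the multiplicative extension to all of $\mathfrak h_1$ in (b) are details the paper leaves implicit, but they do not constitute a different argument.
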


\begin{proof} (a) We need to verify that for any $F\in \mathcal Q_\sigma(\Gamma)$ and 
any $\alpha\in GL_2^+(\mathbb Q)$, we have $([Y,X](F))_\alpha=X(F)_\alpha$. We know that for any element $g\in \mathcal{QM}$ and hence in particular for the element $F_\alpha\in \mathcal{QM}$,
we have $[Y,X](g)=X(g)$. The result now follows from the definition of the action of $X$
and $Y$ in \eqref{4.8}. 

\medskip
(b) The Lie action of $\mathfrak l_1$ on $\mathcal Q_\sigma(\Gamma)$ from part (a) induces an action
of the universal enveloping algebra $\mathfrak h_1$ on $\mathcal Q_\sigma(\Gamma)$. In order
to prove \eqref{4.9}, it suffices to prove the result for the generators $X$ and $Y$. We have:
\begin{equation}\label{4.10}
\begin{array}{l}
(X(F^1,F^2))_\alpha=X((F^1,F^2)_\alpha)\\
 = X\left( \underset{\beta\in \Gamma\backslash SL_2(\mathbb Z)}{\sum}F^1_{\beta\sigma}
 \cdot (F^2_{\alpha\sigma^{-1}\beta^{-1}}||\sigma\beta)\right)\\ 
 = \underset{\beta\in \Gamma\backslash SL_2(\mathbb Z)}{\sum} X(F^1_{\beta\sigma})
 \cdot (F^2_{\alpha\sigma^{-1}\beta^{-1}}||\sigma\beta)+ \underset{\beta\in \Gamma\backslash GL_2^+(\mathbb Q)}{\sum}F^1_{\beta\sigma}
 \cdot X(F^2_{\alpha\sigma^{-1}\beta^{-1}}||\sigma\beta)\\
  = \underset{\beta\in \Gamma\backslash SL_2(\mathbb Z)}{\sum} X(F^1_{\beta\sigma})
 \cdot (F^2_{\alpha\sigma^{-1}\beta^{-1}}||\sigma\beta)+ \underset{\beta\in \Gamma\backslash SL_2(\mathbb Z)}{\sum}F^1_{\beta\sigma}
 \cdot (X(F^2_{\alpha\sigma^{-1}\beta^{-1}})||\sigma\beta)\\
 = (X(F^1),F^2))_\alpha + (F^1,X(F^2))_\alpha\\
\end{array}
\end{equation} In \eqref{4.10}, we have used the fact that $\sigma\beta\in SL_2(\mathbb Z)$ and
hence $X(F^2_{\alpha\sigma^{-1}\beta^{-1}}||\sigma\beta)=X(F^2_{\alpha\sigma^{-1}\beta^{-1}})||\sigma\beta$. We can similarly verify the relation \eqref{4.9} for $Y\in \mathfrak h_1$. This proves the result. 

\end{proof}

\medskip
Our next aim is to show that $\mathcal Q_\sigma(\Gamma)$ is a right $\mathcal Q(\Gamma)$-module. Thereafter, we will consider
the Hopf algebra $\mathcal H_1$ defined in Section 3.2 and show that there is a ``Hopf action'' of   
$\mathcal H_1$ on the right $\mathcal Q(\Gamma)$-module $\mathcal Q_\sigma(\Gamma)$. 

\medskip
\begin{thm} Let $\sigma\in SL_2(\mathbb Z)$ and let $\Gamma=\Gamma(N)$ be a principal congruence
subgroup of $SL_2(\mathbb Z)$. Then, $\mathcal Q_\sigma(\Gamma)$ carries  a right 
$\mathcal Q(\Gamma)$-module structure defined by:
\begin{equation}\label{4.11}
(F^1\ast F^2)_\alpha :=\underset{\beta\in \Gamma\backslash GL_2^+(\mathbb Q)}{\sum}
F^1_{\beta\sigma}\cdot (F^2_{\alpha\sigma^{-1}\beta^{-1}}|\beta)
\end{equation} for any $F^1\in \mathcal Q_\sigma(\Gamma)$ and any
$F^2\in \mathcal Q(\Gamma)$. 

\end{thm}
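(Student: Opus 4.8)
The plan is to verify three things in turn: that the sum in \eqref{4.11} is independent of the choice of coset representatives, that $F^1\ast F^2$ again lies in $\mathcal Q_\sigma(\Gamma)$, and finally that $\ast$ satisfies the right-module associativity law $(F^1\ast F^2)\ast F^3=F^1\ast(F^2\ast F^3)$ for $F^1\in\mathcal Q_\sigma(\Gamma)$ and $F^2,F^3\in\mathcal Q(\Gamma)$, the inner product being the algebra product \eqref{2.11} on $\mathcal Q(\Gamma)$. Throughout, the two identities in \eqref{tv2.11}, namely $(f\cdot g)||\tau=(f||\tau)\cdot(g||\tau)$ and $f||(\tau\tau')=(f||\tau)||\tau'$, are essentially the only facts about the operation $||$ that are needed.

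For well-definedness I would replace the representative $\beta$ by $\gamma\beta$ with $\gamma\in\Gamma$. The first factor is unchanged since $F^1_{\gamma\beta\sigma}=F^1_{\beta\sigma}$ (the argument differs by left multiplication by $\gamma\in\Gamma$). For the second factor, the covariance \eqref{tt2.16} of $F^2\in\mathcal Q(\Gamma)$ gives $F^2_{(\alpha\sigma^{-1}\beta^{-1})\gamma^{-1}}=F^2_{\alpha\sigma^{-1}\beta^{-1}}||\gamma^{-1}$, and then the multiplicativity $f||(\tau\tau')=(f||\tau)||\tau'$ collapses $(F^2_{\alpha\sigma^{-1}\beta^{-1}}||\gamma^{-1})||(\gamma\beta)$ back to $F^2_{\alpha\sigma^{-1}\beta^{-1}}||\beta$, so each summand is invariant. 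Finite support of $F^1\ast F^2$ is immediate since $F^1$ and $F^2$ both have finite support. To then show $F^1\ast F^2\in\mathcal Q_\sigma(\Gamma)$ I would check the twisted covariance \eqref{4.2}: in $(F^1\ast F^2)_{\alpha\gamma}$ I would substitute $\delta=\beta\sigma\gamma^{-1}\sigma^{-1}$, a bijection of $\Gamma\backslash GL_2^+(\mathbb Q)$ since $\sigma\gamma^{-1}\sigma^{-1}\in SL_2(\mathbb Z)$. Under this substitution $\beta\sigma=\delta\sigma\gamma$, so \eqref{4.2} turns $F^1_{\beta\sigma}$ into $F^1_{\delta\sigma}||\sigma\gamma\sigma^{-1}$; simultaneously the index $\alpha\gamma\sigma^{-1}\beta^{-1}$ simplifies to $\alpha\sigma^{-1}\delta^{-1}$, and $||\beta=||(\delta\sigma\gamma\sigma^{-1})$ factors as $(\,\cdot\,||\delta)||\sigma\gamma\sigma^{-1}$. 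Pulling the common right factor $||\sigma\gamma\sigma^{-1}$ out of the product via $(f||\tau)(g||\tau)=(f\cdot g)||\tau$ then yields exactly $(F^1\ast F^2)_\alpha||\sigma\gamma\sigma^{-1}$, as required.

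The main work is the associativity law, and the cleanest route is to rewrite \eqref{4.11} in the ``pair'' form $(F^1\ast F^2)_\alpha=\sum_{\alpha_2\alpha_1=\alpha\sigma^{-1}}F^1_{\alpha_1\sigma}\cdot(F^2_{\alpha_2}||\alpha_1)$, summed over pairs modulo $(\alpha_1,\alpha_2)\sim(\gamma\alpha_1,\alpha_2\gamma^{-1})$, exactly parallel to \eqref{2.12}. Expanding $((F^1\ast F^2)\ast F^3)_\alpha$ by first applying the module action and then substituting the pair form of $F^1\ast F^2$, and expanding $(F^1\ast(F^2\ast F^3))_\alpha$ by first substituting the algebra product \eqref{2.12} of $F^2\ast F^3$ and then distributing $||\alpha_1$ across the product using \eqref{tv2.11}, both sides reduce to the single triple sum $\sum_{\alpha_3\alpha_2\alpha_1=\alpha\sigma^{-1}}F^1_{\alpha_1\sigma}\cdot(F^2_{\alpha_2}||\alpha_1)\cdot(F^3_{\alpha_3}||\alpha_2\alpha_1)$ over the equivalence relation \eqref{2.15}; this mirrors the computation \eqref{2.14}--\eqref{2.16}.

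The one point demanding care is the bookkeeping of the twist: $\sigma$ enters only through the shift $\alpha\mapsto\alpha\sigma^{-1}$ in the constraint and through the single occurrence $F^1_{\alpha_1\sigma}$, and the decisive step is noticing that when one expands $(F^1\ast F^2)_{\beta\sigma}$ the inner index cancels as $\beta\sigma\cdot\sigma^{-1}=\beta$, which is precisely what makes both bracketings land on the same triple sum; the twisting otherwise never interferes because $\sigma$ never reappears inside the $||$-operations. Finally, unitality $F^1\ast e=F^1$ for the unit $e\in\mathcal Q(\Gamma)$ supported on $\Gamma$ with $e_1=1$ is an immediate consequence of the pair form, since only the term $\alpha_2\in\Gamma$, $\alpha_1=\alpha\sigma^{-1}$ survives and $1||\alpha_1=1$, giving $F^1_{\alpha_1\sigma}=F^1_\alpha$.
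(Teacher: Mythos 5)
Your proof is correct and follows essentially the same route as the paper: well-definedness under change of coset representative, the substitution $\delta=\beta\sigma\gamma^{-1}\sigma^{-1}$ for the twisted covariance $(F^1\ast F^2)_{\alpha\gamma}=(F^1\ast F^2)_\alpha||\sigma\gamma\sigma^{-1}$, and associativity via the pair/triple-sum reparametrization, where the paper's pair form $\sum_{\alpha_2\alpha_1=\alpha}F^1_{\alpha_1}\cdot(F^2_{\alpha_2}|\alpha_1\sigma^{-1})$ (with $\alpha_1=\beta\sigma$) is just a relabelling of your $\sum_{\alpha_2\alpha_1=\alpha\sigma^{-1}}F^1_{\alpha_1\sigma}\cdot(F^2_{\alpha_2}||\alpha_1)$. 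Your explicit unitality check is a small, welcome addition that the paper omits.
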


\begin{proof} We take $\gamma\in \Gamma$. Then, since $F^1\in \mathcal Q_\sigma(\Gamma)$ and 
$F^2\in \mathcal Q(\Gamma)$, we have:
\begin{equation}
F^1_{\gamma\beta\sigma}=F^1_{\beta\sigma}\qquad
F^2_{\alpha\sigma^{-1}\beta^{-1}\gamma^{-1}}|\gamma\beta=F^2_{\alpha\sigma^{-1}\beta^{-1}}|\gamma^{-1}\gamma\beta=F^2_{\alpha\sigma^{-1}\beta^{-1}}|\beta
\end{equation} It follows that the sum in \eqref{4.11} is well defined, i.e., it does not depend
on the choice of coset representatives for $\Gamma$ in $GL_2^+(\mathbb Q)$. Further, it is clear
that $(F^1\ast F^2)_{\gamma\alpha}=(F^1\ast F^2)_\alpha$. In order to show that 
$F^1\ast F^2\in \mathcal Q_\sigma(\Gamma)$, it remains to show that
$(F^1\ast F^2)_{\alpha\gamma}=(F^1\ast F_2)_\alpha||\sigma\gamma\sigma^{-1}$. By definition,
we know that:
\begin{equation}\label{4.13}
\begin{array}{ll}
(F^1\ast F^2)_{\alpha\gamma} & = \underset{\beta\in \Gamma\backslash GL_2^+(\mathbb Q)}{\sum} 
F^1_{\beta\sigma}\cdot (F^2_{\alpha\gamma\sigma^{-1}\beta^{-1}}|\beta)\\
\end{array}
\end{equation} We now set $\delta=\beta\sigma\gamma^{-1}\sigma^{-1}$. This allows us to rewrite
\eqref{4.13} as follows:
\begin{equation}\label{4.14}
\begin{array}{ll}
(F^1\ast F^2)_{\alpha\gamma}& = \underset{\delta\in \Gamma\backslash GL_2^+(\mathbb Q)}{\sum}
F^1_{\delta\sigma\gamma}\cdot (F^2_{\alpha\sigma^{-1}\delta^{-1}}|\delta\sigma\gamma\sigma^{-1})\\
& = \underset{\delta\in \Gamma\backslash GL_2^+(\mathbb Q)}{\sum}
(F^1_{\delta\sigma}||\sigma\gamma\sigma^{-1})\cdot ((F^2_{\alpha\sigma^{-1}\delta^{-1}}|\delta)|\sigma\gamma\sigma^{-1}))\\
& =\left(\underset{\delta\in \Gamma\backslash GL_2^+(\mathbb Q)}{\sum}
F^1_{\delta\sigma}\cdot (F^2_{\alpha\sigma^{-1}\delta^{-1}}|\delta)\right) ||\sigma\gamma\sigma^{-1}\\
& =(F^1\ast F^2)_\alpha ||\sigma\gamma\sigma^{-1}\\
\end{array}
\end{equation} Hence, $(F^1\ast F^2)\in \mathcal Q_\sigma(\Gamma)$. In order to show that
$\mathcal Q_\sigma(\Gamma)$ is a right $\mathcal Q(\Gamma)$-module, we need to check that
$F^1\ast (F^2\ast F^3)=(F^1\ast F^2)\ast F^3$ for any $F^1\in \mathcal Q_\sigma(\Gamma)$ and
any $F^2,F^3\in \mathcal Q(\Gamma)$. For this, we note that:
\begin{equation}\label{4.15}
(F^1\ast F^2)_\alpha=\underset{\alpha_2\alpha_1=\alpha}
{\sum} F^1_{\alpha_1}\cdot (F^2_{\alpha_2}|\alpha_1\sigma^{-1})\qquad \forall\textrm{ }\alpha\in GL_2^+(\mathbb Q)
\end{equation} where the sum in \eqref{4.15} is taken over all pairs $(\alpha_1,\alpha_2)$ such that
$\alpha_2\alpha_1=\alpha$ modulo the the following equivalence relation:
\begin{equation}
(\alpha_1,\alpha_2)\sim (\gamma\alpha_1,\alpha_2\gamma^{-1})\qquad\forall\textrm{ }\gamma\in \Gamma
\end{equation} It follows that for any $\alpha\in GL_2^+(\mathbb Q)$, we have:
\begin{equation}\label{4.17}
((F^1\ast F^2)\ast F^3)_\alpha=\underset{\alpha_3\alpha_2\alpha_1=\alpha}{\sum}F^1_{\alpha_1}\cdot
(F^2_{\alpha_2}|\alpha_1\sigma^{-1})\cdot (F^3_{\alpha_3}|\alpha_2\alpha_1\sigma^{-1})
\end{equation} where the sum in \eqref{4.17} is taken over all triples $(\alpha_1,\alpha_2,\alpha_3)$ such
that $\alpha_3\alpha_2\alpha_1=\alpha$ modulo the following equivalence relation:
\begin{equation}\label{4.18}
(\alpha_1,\alpha_2,\alpha_3)\sim (\gamma\alpha_1,\gamma'\alpha_2\gamma^{-1},\alpha_3\gamma'^{-1})
\qquad\forall\textrm{ }\gamma,\gamma'\in \Gamma
\end{equation} On the other hand, we have:
\begin{equation}\label{4.19}
\begin{array}{ll}
(F^1\ast (F^2\ast F^3))_\alpha & =\underset{\alpha_2'\alpha_1=\alpha}{\sum} F^1_{\alpha_1}\cdot ((F^2\ast F^3)_{\alpha'_2}|\alpha_1\sigma^{-1})\\
& = \underset{\alpha_3\alpha_2\alpha_1=\alpha}{\sum} F^1_{\alpha_1}\cdot (F^2_{\alpha_2}|\alpha_1\sigma^{-1})\cdot (F^3_{\alpha_3}|\alpha_2\alpha_1\sigma^{-1})\\
\end{array}
\end{equation} Again, we see that the sum in \eqref{4.19} is taken over all triples $(\alpha_1,\alpha_2,\alpha_3)$ such
that $\alpha_3\alpha_2\alpha_1=\alpha$ modulo the  equivalence relation in \eqref{4.18}. From \eqref{4.17} and \eqref{4.19}, it follows that $(F^1\ast (F^2\ast F^3))_\alpha=((F^1\ast F^2)\ast F^3)_\alpha$. This proves the
result. 
\end{proof}

\medskip
We are now ready to describe the action of the Hopf algebra $\mathcal H_1$ on $\mathcal Q_\sigma(\Gamma)$. From Section 3.2, we know that 
$\mathcal H_1$ is generated by $X$, $Y$, $\{\delta_n\}_{n\geq 1}$ which satisfy the relations  \eqref{3.2.46}, \eqref{3.2.47qz}, \eqref{3.2.48qz}.

\medskip
\begin{thm} Let $\Gamma=\Gamma(N)$ be a principal congruence subgroup of $SL_2(\mathbb Z)$
and choose some $\sigma\in SL_2(\mathbb Z)$. 

\medskip
(a) The collection of $\sigma$-twisted quasimodular Hecke operators of level $\Gamma$ can be made into an $\mathcal H_1$-module as follows; for any $F\in \mathcal Q_\sigma(\Gamma)$ and $\alpha\in GL_2^+(\mathbb Q)$:
\begin{equation}\label{4.20}
X(F)_\alpha:=X(F_\alpha) \qquad Y(F)_\alpha:=Y(F_\alpha)\qquad \delta_n(F)_\alpha:=X^{n-1}(\mu_{\alpha\sigma^{-1}})\cdot F_\alpha\qquad\forall\textrm{ }n\geq 1
\end{equation}

\medskip
(b) The Hopf algebra $\mathcal H_1$ has a ``Hopf action'' on the right $\mathcal Q(\Gamma)$-module
$\mathcal Q_\sigma(\Gamma)$; in other words, for any $F^1\in \mathcal Q_\sigma(\Gamma)$ and
any $F^2\in \mathcal Q(\Gamma)$, we have:
\begin{equation}\label{4.21z}
h(F^1\ast F^2)=\sum h_{(1)}(F^1)\ast h_{(2)}(F^2)\qquad \forall\textrm{ }h\in \mathcal H_1
\end{equation} where the coproduct $\Delta:\mathcal H_1\longrightarrow \mathcal H_1\otimes 
\mathcal H_1$ is given by $\Delta(h)=\sum h_{(1)}\otimes h_{(2)}$ for each $h\in \mathcal H_1$. 
\end{thm}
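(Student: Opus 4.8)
The plan is to verify the module action in part (a) is well-defined, then reduce the Hopf-action identity in part (b) to checking it on the three algebra generators $X$, $Y$, $\delta_1$, using the coproduct formulas in \eqref{3.2.47qz}.

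First I would address part (a). The operators $X$ and $Y$ are defined coefficientwise as in \eqref{3.52}, so the only content is that they preserve $\mathcal Q_\sigma(\Gamma)$, i.e.\ that they respect the twisted covariance condition \eqref{4.2}. For $Y$ this is immediate since $Y(f\vert\vert\alpha)=Y(f)\vert\vert\alpha$ (Lemma \ref{L3.2.1}), so $Y(F)_{\alpha\gamma}=Y(F_{\alpha\gamma})=Y(F_\alpha\vert\vert\sigma\gamma\sigma^{-1})=Y(F_\alpha)\vert\vert\sigma\gamma\sigma^{-1}$. For $\delta_n$, the key point is that the twist is now carried by $\mu_{\alpha\sigma^{-1}}$ rather than $\mu_\alpha$; I would check using \eqref{3.2.57X} that $\mu_{(\alpha\gamma)\sigma^{-1}}=\mu_{\alpha\sigma^{-1}}\vert(\sigma\gamma\sigma^{-1})+\mu_{\sigma\gamma\sigma^{-1}}$ and that $\mu_{\sigma\gamma\sigma^{-1}}=0$ because $\sigma\gamma\sigma^{-1}\in SL_2(\mathbb Z)$, so the cocycle collapses exactly as needed to give $\delta_n(F)_{\alpha\gamma}=X^{n-1}(\mu_{\alpha\sigma^{-1}})\cdot F_\alpha\vert\vert\sigma\gamma\sigma^{-1}=\delta_n(F)_\alpha\vert\vert\sigma\gamma\sigma^{-1}$. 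The Lie relations \eqref{3.2.46} then hold by the same computations as in Proposition \ref{P3.10}, with $\mu_\alpha$ replaced throughout by $\mu_{\alpha\sigma^{-1}}$.

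For part (b), it suffices to verify \eqref{4.21z} for $h\in\{X,Y,\delta_1\}$. The relations for $Y$ and $\delta_1$ have primitive coproducts, so these reduce to showing $Y$ and $\delta_1$ are derivations for the right-module product \eqref{4.11}; the computation mirrors \eqref{3.2.67}, using $\mu_{\alpha\sigma^{-1}}=\mu_{\beta\sigma\cdot\sigma^{-1}}\vert\cdots$ decomposed via \eqref{3.2.57X} so that the twist splits as $\mu_{\beta}$ on the $F^1$ factor (landing in $\delta_1(F^1)$) and $\mu_{\alpha\beta^{-1}\sigma^{-1}}$ on the $F^2$ factor. The main step is the generator $X$, whose coproduct $\Delta(X)=X\otimes 1+1\otimes X+\delta_1\otimes Y$ produces the extra term $\delta_1(F^1)\ast Y(F^2)$. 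Following the template of Lemma \ref{L3.11}, I would expand $X((F^1\ast F^2)_\alpha)$ by applying the derivation $X$ to each summand $F^1_{\beta\sigma}\cdot(F^2_{\alpha\sigma^{-1}\beta^{-1}}\vert\beta)$ and invoking Lemma \ref{L3.2.1}, which on the $F^2$-factor contributes a correction $(\mu_{\beta^{-1}}\cdot Y(F^2_{\alpha\sigma^{-1}\beta^{-1}}))\vert\beta$; the cocycle identity $\mu_{\beta^{-1}}\vert\beta=-\mu_\beta$ converts this into $-\mu_\beta$ on the $F^1$ side.

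The subtlety I expect to be the main obstacle is bookkeeping the twist: the module product \eqref{4.11} involves $F^1_{\beta\sigma}$ and $F^2_{\alpha\sigma^{-1}\beta^{-1}}$, so when I apply $X$ and extract the $\mu$-correction I must confirm that the argument of $\mu$ that surfaces is exactly $\mu_{\beta\sigma\cdot\sigma^{-1}}=\mu_\beta$, matching $\delta_1(F^1)_{\beta\sigma}=\mu_{\beta\sigma\cdot\sigma^{-1}}\cdot F^1_{\beta\sigma}=\mu_\beta\cdot F^1_{\beta\sigma}$ from the twisted formula in \eqref{4.20}. Once I check that the $\sigma$-twist in the definition of $\delta_1$ on $\mathcal Q_\sigma(\Gamma)$ is precisely calibrated to cancel against the $\sigma$ appearing in the module product, the extra term assembles as $(\delta_1(F^1)\ast Y(F^2))_\alpha$ and \eqref{4.21z} for $X$ follows, completing the verification for all of $\mathcal H_1$.
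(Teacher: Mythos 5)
Your proposal is correct and follows essentially the same route as the paper: part (a) via the cocycle identity \eqref{3.2.57X} together with the vanishing of $\mu_{\sigma\gamma\sigma^{-1}}$ for $\sigma\gamma\sigma^{-1}\in SL_2(\mathbb Z)$, and part (b) by checking the generators $X$, $Y$, $\delta_1$, with the $\delta_1\otimes Y$ term in $\Delta(X)$ arising from Lemma \ref{L3.2.1} and $\mu_{\beta^{-1}}|\beta=-\mu_\beta$, and the key calibration $\delta_1(F^1)_{\beta\sigma}=\mu_{\beta\sigma\sigma^{-1}}\cdot F^1_{\beta\sigma}=\mu_\beta\cdot F^1_{\beta\sigma}$ identified exactly as in \eqref{4.26rty}. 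The only blemishes are cosmetic: the correction term from Lemma \ref{L3.2.1} enters with a minus sign (so that after the cocycle identity it becomes $+\mu_\beta$, matching your stated conclusion), and the $\mu$ attached to the $F^2$-factor in the $\delta_1$ computation should be indexed by $\alpha\sigma^{-1}\beta^{-1}$ rather than $\alpha\beta^{-1}\sigma^{-1}$.
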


\begin{proof} (a) For any $F\in \mathcal Q_\sigma(\Gamma)$, we have already checked in the proof 
of Proposition \ref{Prop4.3} that 
$X(F)$, $Y(F)\in \mathcal Q_\sigma(\Gamma)$. Further, from \eqref{3.2.57X}, we know that for any
$\alpha\in GL_2^+(\mathbb Q)$ and $\gamma\in \Gamma$, we have:
\begin{equation}\label{4.22}
\begin{array}{c}
\mu_{\gamma\alpha\sigma^{-1}}=\mu_{\gamma}|\alpha\sigma^{-1}+\mu_{\alpha\sigma^{-1}}=\mu_{\alpha\sigma^{-1}}\\
\mu_{\alpha\gamma\sigma^{-1}}=\mu_{\alpha\sigma^{-1}}|\sigma\gamma\sigma^{-1}+
\mu_{\sigma\gamma\sigma^{-1}}=\mu_{\alpha\sigma^{-1}}|\sigma\gamma\sigma^{-1}\\
\end{array}
\end{equation} Hence, for any $F\in \mathcal Q_\sigma(\Gamma)$, we have:
\begin{equation}
\begin{array}{c}
\delta_n(F)_{\gamma\alpha}=X^{n-1}(\mu_{\gamma\alpha\sigma^{-1}})\cdot F_{\gamma\alpha}=
X^{n-1}(\mu_{\alpha\sigma^{-1}})\cdot F_{\alpha}=\delta_n(F)_\alpha\\
\delta_n(F)_{\alpha\gamma}=X^{n-1}(\mu_{\alpha\gamma\sigma^{-1}})\cdot F_{\alpha\gamma} 
=X^{n-1}(\mu_{\alpha\sigma^{-1}}|\sigma\gamma\sigma^{-1})\cdot (F_\alpha||\sigma\gamma\sigma^{-1})
=\delta_n(F)_\alpha||\sigma\gamma\sigma^{-1}\\
\end{array}
\end{equation} Hence, $\delta_n(F)\in \mathcal Q_\sigma(\Gamma)$. In order to show that there is an action
of the Lie algebra $\mathcal L_1$ (and hence of its universal eneveloping algebra $\mathcal H_1$) on
$\mathcal Q_\sigma(\Gamma)$, it remains to check the commutator relations \eqref{3.2.46} between the operators
$X$, $Y$ and $\delta_n$ acting on $\mathcal Q_\sigma(\Gamma)$. We have already checked that 
$[Y,X]=X$ in the proof of Proposition \ref{Prop4.3}. Since $X$ is a derivation on $\mathcal{QM}$ and 
$\delta_n(F)_\alpha=X^{n-1}(\mu_{\alpha\sigma^{-1}})\cdot F_\alpha$, we have: 
\begin{equation}
\begin{array}{ll}
[X,\delta_n](F)_\alpha & = X(X^{n-1}(\mu_{\alpha\sigma^{-1}})\cdot F_\alpha) - X^{n-1}(\mu_{\alpha\sigma^{-1}})\cdot X(F_\alpha)\\
& = X(X^{n-1}(\mu_{\alpha\sigma^{-1}}))\cdot F_\alpha =X^n(\mu_{\alpha\sigma^{-1}})\cdot F_\alpha=\delta_{n+1}(F)_\alpha \\
\end{array}
\end{equation} Similarly, since $\mu_{\alpha\sigma^{-1}}
\in \mathcal M\subseteq \mathcal{QM}$ is of weight $2$  and $Y$ is a derivation on $\mathcal{QM}$, we have:
\begin{equation}
\begin{array}{ll}
[Y,\delta_n](F)_\alpha &=Y(X^{n-1}(\mu_{\alpha\sigma^{-1}})\cdot F_\alpha) - X^{n-1}(\mu_{\alpha\sigma^{-1}})\cdot Y(F_\alpha) \\
&=Y(X^{n-1}(\mu_{\alpha\sigma^{-1}}))\cdot F_\alpha=nX^{n-1}(\mu_{\alpha\sigma^{-1}})\cdot F_\alpha=n\delta_n(F)_\alpha \\
\end{array}
\end{equation} Finally, we can verify easily that $[\delta_k,\delta_l]=0$ for any $k$, $l\geq 1$.

\medskip
(b) In order to prove \eqref{4.21z}, it is enough to check this equality for the generators
$X$, $Y$ and $\delta_1\in \mathcal H_1$. For $F^1\in \mathcal Q_\sigma(\Gamma)$, $F^2\in \mathcal Q(\Gamma)$ and $\alpha\in GL_2^+(\mathbb Q)$, we have:
\begin{equation}\label{4.26rty}
\begin{array}{l}
(X(F^1\ast F^2))_\alpha=X((F^1\ast F^2)_\alpha)\\
= \underset{\beta\in \Gamma\backslash GL_2^+(\mathbb Q)}{\sum}X(F^1_{\beta\sigma}\cdot (F^2_{
\alpha\sigma^{-1}\beta^{-1}}|\beta)) \\
=\underset{\beta\in \Gamma\backslash GL_2^+(\mathbb Q)}{\sum}  X(F^1_{\beta\sigma})\cdot (F^2_{
\alpha\sigma^{-1}\beta^{-1}}|\beta) +  \underset{\beta\in \Gamma\backslash GL_2^+(\mathbb Q)}{\sum}  F^1_{\beta\sigma}\cdot X(F^2_{
\alpha\sigma^{-1}\beta^{-1}}|\beta) \\
=(X(F^1)\ast F^2)_\alpha + \underset{\beta\in \Gamma\backslash GL_2^+(\mathbb Q)}{\sum} F^1_{\beta\sigma}\cdot X(F^2_{
\alpha\sigma^{-1}\beta^{-1}})|\beta - \underset{\beta\in \Gamma\backslash GL_2^+(\mathbb Q)}{\sum}  F^1_{\beta\sigma}\cdot (\mu_{\beta^{-1}}|\beta)\cdot Y(F^2_{\alpha\sigma^{-1}\beta^{-1}})|\beta \\
=(X(F^1)\ast F^2)_\alpha + \underset{\beta\in \Gamma\backslash GL_2^+(\mathbb Q)}{\sum} F^1_{\beta\sigma}\cdot X(F^2_{
\alpha\sigma^{-1}\beta^{-1}})|\beta + \underset{\beta\in \Gamma\backslash GL_2^+(\mathbb Q)}{\sum}  F^1_{\beta\sigma}\cdot \mu_{\beta}\cdot Y(F^2_{\alpha\sigma^{-1}\beta^{-1}})|\beta \\
=(X(F^1)\ast F^2)_\alpha + (F^1\ast X(F^2))_\alpha + \sum_{\beta\in \Gamma\backslash GL_2^+(\mathbb Q)}
\delta_1(F)_{\beta\sigma}\cdot Y(F^2)_{\alpha\sigma^{-1}\beta^{-1}}|\beta \\
=(X(F^1)\ast F^2)_\alpha + (F^1\ast X(F^2))_\alpha + (\delta_1(F^1)\ast Y(F^2))_\alpha\\ 
\end{array}
\end{equation} In \eqref{4.26rty} above, we have used the fact that $0=\mu_{\beta^{-1}\beta}
=\mu_{\beta^{-1}}|\beta +\mu_\beta$. For $\alpha$, $\beta\in GL_2^+(\mathbb Q)$, it follows from \eqref{3.2.57X} that
\begin{equation}\label{216XXY}
\mu_{\alpha\sigma^{-1}}=\mu_{\alpha\sigma^{-1}\beta^{-1}\beta}=\mu_{\alpha\sigma^{-1}
\beta^{-1}}|\beta + \mu_{\beta} 
\end{equation} Since $F^2\in \mathcal Q(\Gamma)$ we know from \eqref{3.2.58} that  
$\delta_1(F^2)_{\alpha\sigma^{-1}\beta^{-1}}=
\mu_{\alpha\sigma^{-1}\beta^{-1}}\cdot F^2_{\alpha\sigma^{-1}\beta^{-1}}$.  Combining with \eqref{216XXY}, we have:
\begin{equation}
\begin{array}{l}
\delta_1((F^1\ast F^2))_\alpha=\mu_{\alpha\sigma^{-1}}\cdot (F^1\ast F^2)_\alpha= \underset{\beta\in \Gamma\backslash GL_2^+(\mathbb Q)}{\sum} \mu_{\alpha\sigma^{-1}}\cdot (F^1_{\beta\sigma}\cdot (F^2_{
\alpha\sigma^{-1}\beta^{-1}}|\beta))\\
=  \underset{\beta\in \Gamma\backslash GL_2^+(\mathbb Q)}{\sum} 
(\mu_\beta\cdot F^1_{\beta\sigma})\cdot (F^2_{\alpha\sigma^{-1}\beta^{-1}}|\beta)
+  \underset{\beta\in \Gamma\backslash GL_2^+(\mathbb Q)}{\sum}   F^1_{\beta\sigma} 
\cdot (\mu_{\alpha\sigma^{-1}\beta^{-1}}\cdot F^2_{\alpha\sigma^{-1}\beta^{-1}})|\beta\\
=(\delta_1(F^1)\ast F^2)_\alpha + (F^1\ast\delta_1(F^2))_\alpha
\end{array}
\end{equation} Finally, from the definition of $Y$, it is easy to show that  $(Y(F^1\ast F^2))_\alpha
=(Y(F^1)\ast F^2)_\alpha + (F^1\ast Y(F^2))_\alpha$. 

\end{proof}

\medskip

\medskip
\subsection{The operators $X_\tau:\mathcal Q_\sigma(\Gamma)\longrightarrow \mathcal Q_{\tau\sigma}(\Gamma)$ and
Hopf action}

\medskip

\medskip
Let $\Gamma=\Gamma(N)$ be a principal congruence subgroup and choose some $\sigma\in 
SL_2(\mathbb Z)$. In Section 4.1, we have only considered operators $X$, $Y$ and $\{\delta_n\}_{n\geq 1}$ that are endomorphisms of $\mathcal Q_\sigma(\Gamma)$. In this section, we will define an operator
\begin{equation}\label{4.2.28}
X_\tau:\mathcal Q_\sigma(\Gamma)\longrightarrow \mathcal Q_{\tau\sigma}(\Gamma)
\end{equation} for   $\tau\in SL_2(\mathbb Z)$. In particular, we consider the commuting family $\left\{\rho_n:=\begin{pmatrix} 
1 & n \\ 0 & 1 \\ \end{pmatrix}\right\}_{n\in \mathbb Z}$  of matrices in $SL_2(\mathbb Z)$ and write $\sigma(n):=
\rho_n\cdot \sigma$. Then, we have operators:
\begin{equation}\label{4.2.29}
X_{\rho_n}:\mathcal Q_{\sigma(m)}(\Gamma)\longrightarrow \mathcal Q_{\sigma(m+n)}(\Gamma) \qquad \forall\textrm{ }m,n\in \mathbb Z
\end{equation} acting ``between the levels'' of the graded module $\mathbb Q_\sigma(\Gamma):=
\underset{m\in \mathbb Z}{\bigoplus}\mathcal Q_{\sigma(m)}(\Gamma)$. We already know that $\mathcal Q_\sigma(\Gamma)$ carries an action of the Hopf algebra $\mathfrak h_1$. Further, $\mathfrak h_1$ has a Hopf action on the pairing on $\mathcal Q_\sigma(\Gamma)$ in the sense of Proposition \ref{Prop4.3}. We will now show that $\mathfrak h_1$ can be naturally embedded  into a larger Hopf algebra $\mathfrak h_{\mathbb Z}$ acting on 
$\mathbb Q_\sigma(\Gamma)$ that incorporates the operators $X_{\rho_n}$ in 
\eqref{4.2.29}. Finally, we will show that the pairing
on $\mathcal Q_\sigma(\Gamma)$ can be extended to a pairing:
\begin{equation}
(\_\_,\_\_):\mathcal Q_{\sigma(m)}(\Gamma)\otimes \mathcal Q_{\sigma(n)}(\Gamma)
\longrightarrow \mathcal Q_{\sigma(m+n)}(\Gamma)\qquad \forall\textrm{ }m,n\in \mathbb Z
\end{equation} This gives us a pairing on $\mathbb Q_\sigma(\Gamma)$ and we prove that this pairing carries a Hopf action of 
$\mathfrak h_{\mathbb Z}$. We start by defining the operators $X_\tau$ mentioned in \eqref{4.2.28}. 

\medskip
\begin{thm}\label{Prop4.6} (a) Let $\Gamma=\Gamma(N)$ be a principal congruence subgroup of $SL_2(\mathbb Z)$
and choose $\sigma\in SL_2(\mathbb Z)$. 

\medskip
(a) For each $\tau\in SL_2(\mathbb Z)$, we have a morphism:
\begin{equation}\label{4.2.31}
X_\tau:\mathcal Q_\sigma(\Gamma)\longrightarrow \mathcal Q_{\tau\sigma}(\Gamma)\qquad X_\tau(F)_\alpha
:=X(F_\alpha)||\tau^{-1}\qquad \forall\textrm{ }F\in \mathcal Q_\sigma(\Gamma),\textrm{ }\alpha\in GL_2^+(\mathbb Q)
\end{equation}

\medskip
(b) Let $\tau_1$, $\tau_2\in SL_2(\mathbb Z)$ be two matrices such that $\tau_1\tau_2=\tau_2\tau_1$. Then, the commutator $[X_{\tau_1},X_{\tau_2}]=0$. 

\end{thm}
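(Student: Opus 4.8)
The plan is to check, for part (a), that the prescription $X_\tau(F)_\alpha := X(F_\alpha)||\tau^{-1}$ really produces an element of $\mathcal Q_{\tau\sigma}(\Gamma)$, and for part (b) to compute both composites $X_{\tau_1}X_{\tau_2}$ and $X_{\tau_2}X_{\tau_1}$ pointwise and observe that they agree. The two facts doing all the work are Lemma \ref{L3.2.1}, which records the failure of $X$ to commute with the $||$-action through the correction term $(\mu_{\alpha^{-1}}\cdot Y(f))||\alpha$, together with the vanishing $\mu_\delta=0$ for every $\delta\in SL_2(\mathbb Z)$ noted just after that lemma. Since every twisting matrix here lies in $SL_2(\mathbb Z)$, all such correction terms will disappear.

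For part (a), finite support is immediate because $X_\tau(F)_\alpha$ vanishes whenever $F_\alpha$ does, and since $F$ depends only on the left coset $\Gamma\alpha$ while $X$ acts pointwise, I would first check left-coset invariance: $X_\tau(F)_{\gamma\alpha}=X(F_{\gamma\alpha})||\tau^{-1}=X(F_\alpha)||\tau^{-1}=X_\tau(F)_\alpha$ for $\gamma\in\Gamma$. The substance is the $(\tau\sigma)$-covariance. Using $F_{\alpha\gamma}=F_\alpha||\sigma\gamma\sigma^{-1}$ and then Lemma \ref{L3.2.1} with $\delta=\sigma\gamma\sigma^{-1}\in SL_2(\mathbb Z)$, so that $\mu_{\delta^{-1}}=\mu_{\sigma\gamma^{-1}\sigma^{-1}}=0$, one finds
\begin{equation*}
X_\tau(F)_{\alpha\gamma}=X\!\left(F_\alpha||\sigma\gamma\sigma^{-1}\right)||\tau^{-1}=\left(X(F_\alpha)||\sigma\gamma\sigma^{-1}\right)||\tau^{-1}=X(F_\alpha)||\sigma\gamma\sigma^{-1}\tau^{-1},
\end{equation*}
while the associativity $f||(\alpha\beta)=(f||\alpha)||\beta$ from \eqref{tv2.11} gives
\begin{equation*}
X_\tau(F)_\alpha||(\tau\sigma)\gamma(\tau\sigma)^{-1}=X(F_\alpha)||\tau^{-1}||\tau\sigma\gamma\sigma^{-1}\tau^{-1}=X(F_\alpha)||\sigma\gamma\sigma^{-1}\tau^{-1}.
\end{equation*}
The two agree, so $X_\tau(F)\in\mathcal Q_{\tau\sigma}(\Gamma)$, establishing (a).

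For part (b), I would first note that the commutator is meaningful: because $\tau_1\tau_2=\tau_2\tau_1$, both $X_{\tau_1}X_{\tau_2}$ and $X_{\tau_2}X_{\tau_1}$ send $\mathcal Q_\sigma(\Gamma)$ into $\mathcal Q_{\tau_1\tau_2\sigma}(\Gamma)=\mathcal Q_{\tau_2\tau_1\sigma}(\Gamma)$. Computing pointwise, $X_{\tau_1}X_{\tau_2}(F)_\alpha=X\!\left(X(F_\alpha)||\tau_2^{-1}\right)||\tau_1^{-1}$; applying Lemma \ref{L3.2.1} with $\delta=\tau_2^{-1}$, the correction term now carries the factor $\mu_{\tau_2}=0$, so $X(X(F_\alpha)||\tau_2^{-1})=X^2(F_\alpha)||\tau_2^{-1}$ and hence $X_{\tau_1}X_{\tau_2}(F)_\alpha=X^2(F_\alpha)||(\tau_1\tau_2)^{-1}$ by associativity of the $||$-action. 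Symmetrically $X_{\tau_2}X_{\tau_1}(F)_\alpha=X^2(F_\alpha)||(\tau_2\tau_1)^{-1}$, and since $(\tau_1\tau_2)^{-1}=(\tau_2\tau_1)^{-1}$ the two coincide, yielding $[X_{\tau_1},X_{\tau_2}]=0$.

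The only genuine subtlety — and the step I would flag — is the disappearance of the correction terms from Lemma \ref{L3.2.1}. Exactly as $X$ fails to be a derivation on $(\mathcal Q(\Gamma),\ast)$ but becomes one on $\mathcal Q^r(\Gamma)$ precisely because $\mu_\beta=0$ on $SL_2(\mathbb Z)$, here the restriction to twists $\tau,\sigma\in SL_2(\mathbb Z)$ is what forces $\mu_{\sigma\gamma^{-1}\sigma^{-1}}$ and $\mu_{\tau_i}$ to vanish. The remaining care is purely bookkeeping: one must apply the lemma with the correct argument ($\delta^{-1}$ inside $\mu$) and track the order of the $||$-factors, since $||$ is a right action. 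No analytic difficulty remains beyond this.
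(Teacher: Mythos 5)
Your proposal is correct and follows essentially the same route as the paper: both parts reduce to the identity $X(f||\delta)=X(f)||\delta$ for $\delta\in SL_2(\mathbb Z)$ (the vanishing of the $\mu$-correction term from Lemma \ref{L3.2.1}), combined with the right-action property $f||(\alpha\beta)=(f||\alpha)||\beta$ and, for (b), the commutativity $\tau_1\tau_2=\tau_2\tau_1$. Your explicit tracking of the argument $\delta^{-1}$ inside $\mu$ and of the order of the $||$-factors is exactly the bookkeeping the paper's proof performs implicitly.
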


\begin{proof} (a) We choose any $F\in \mathcal Q_\sigma(\Gamma)$. From \eqref{4.2.31}, it is clear that $X_\tau(F)_{\gamma\alpha}=X_\tau(F)_\alpha$ for
any $\gamma\in \Gamma$ and $\alpha\in GL_2^+(\mathbb Q)$. Further, we note that:
\begin{equation}\label{4.33xp}
\begin{array}{ll}
X_{\tau}(F)_{\alpha\gamma}=X (F_{\alpha\gamma})||\tau^{-1}&= X(F_\alpha ||\sigma\gamma\sigma^{-1})||\tau^{-1} \\
& =X(F_\alpha||\tau^{-1})||\tau\sigma\gamma\sigma^{-1}\tau^{-1} \\
& = X_\tau(F_\alpha)||((\tau\sigma)\gamma(\sigma^{-1}\tau^{-1}))\\
\end{array}
\end{equation} It follows from \eqref{4.33xp} that $X_\tau(F)\in \mathcal Q_{\tau\sigma}(\Gamma)$ for any $F\in \mathcal Q_\sigma(\Gamma)$. 

\medskip
(b) Since $\tau_1$ and $\tau_2$ commute, both $X_{\tau_1}X_{\tau_2}$ and 
$X_{\tau_2}X_{\tau_1}$ are operators from $\mathcal Q_{\sigma}(\Gamma)$
to $\mathcal Q_{\tau_1\tau_2\sigma}(\Gamma)=\mathcal Q_{\tau_2\tau_1\sigma}(\Gamma)$. For any
$F\in \mathcal Q_\sigma(\Gamma)$, we have ($\forall$ $\alpha\in GL_2^+(\mathbb Q)$):
\begin{equation}
(X_{\tau_1}X_{\tau_2}(F))_\alpha=X(X_{\tau_2}(F)_\alpha)||\tau_1^{-1}=X^2(F_\alpha)||\tau_2^{-1}\tau_1^{-1}
=X^2(F_\alpha)||\tau_1^{-1}\tau_2^{-1} = (X_{\tau_2}X_{\tau_1}(F))_\alpha
\end{equation} This proves the result. 
\end{proof}

\medskip
As mentioned before, we now consider the commuting family $\left\{\rho_n:=\begin{pmatrix} 
1 & n \\ 0 & 1 \\ \end{pmatrix}\right\}_{n\in \mathbb Z}$  of matrices in $SL_2(\mathbb Z)$ and set $\sigma(n):=
\rho_n\cdot \sigma$ for any $\sigma\in SL_2(\mathbb Z)$. We want to define a pairing on the graded module 
$\mathbb Q_\sigma(\Gamma)=\underset{m\in \mathbb Z}{\bigoplus}\mathcal Q_{\sigma(m)}(\Gamma)$ that extends the pairing on $\mathcal Q_\sigma(\Gamma)$. In fact, we will prove a more general result.

\medskip
\begin{thm} Let $\Gamma=\Gamma(N)$ be a principal congruence subgroup of $SL_2(\mathbb Z)$
and choose $\sigma\in SL_2(\mathbb Z)$. Let $\tau_1$, $\tau_2\in SL_2(\mathbb Z)$ be two matrices such that $\tau_1\tau_2=\tau_2\tau_1$.  Then, there exists a pairining 
\begin{equation}\label{4.34.2}
(\_\_,\_\_):\mathcal Q_{\tau_1\sigma}(\Gamma)\otimes \mathcal Q_{\tau_2\sigma}(\Gamma)
\longrightarrow \mathcal Q_{\tau_1\tau_2\sigma}(\Gamma)
\end{equation} defined as follows: for any $F^1\in \mathcal Q_{\tau_1\sigma}(\Gamma)$ and 
any $F^2\in \mathcal Q_{\tau_2\sigma}(\Gamma)$, we set:
\begin{equation}\label{4.2.35}
(F^1,F^2)_\alpha:=\underset{\beta\in \Gamma\backslash SL_2(\mathbb Z)}{\sum} (F^1_{\beta\sigma}||\tau_2^{-1})\cdot (F^2_{\alpha\sigma^{-1}\beta^{-1}}||\tau_2\sigma\beta\tau_1^{-1}\tau_2^{-1}) 
\qquad \forall\textrm{ }\alpha\in GL_2^+(\mathbb Q)
\end{equation} In particular, when $\tau_1=\tau_2=1$, the pairing in \eqref{4.2.35} reduces
to the  pairing on $\mathcal Q_\sigma(\Gamma)$  defined in \eqref{4.4}. 
\end{thm}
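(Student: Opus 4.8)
The plan is to verify that the formula \eqref{4.2.35} produces a well-defined element of $\mathcal Q_{\tau_1\tau_2\sigma}(\Gamma)$, following closely the proof of the pairing \eqref{4.4} on $\mathcal Q_\sigma(\Gamma)$ but keeping careful track of the two extra twisting matrices. First I would check that the summand is independent of the choice of coset representative $\beta\in\Gamma\backslash SL_2(\mathbb Z)$: replacing $\beta$ by $\gamma\beta$ with $\gamma\in\Gamma$ leaves $F^1_{\beta\sigma}$ unchanged (since $F^1$ descends to $\Gamma$-cosets), while the covariance \eqref{4.2} for $F^2\in\mathcal Q_{\tau_2\sigma}(\Gamma)$ turns $F^2_{\alpha\sigma^{-1}\beta^{-1}\gamma^{-1}}$ into $F^2_{\alpha\sigma^{-1}\beta^{-1}}||\tau_2\sigma\gamma^{-1}\sigma^{-1}\tau_2^{-1}$; combining this with the change of the explicit twist $\tau_2\sigma\beta\tau_1^{-1}\tau_2^{-1}\mapsto\tau_2\sigma\gamma\beta\tau_1^{-1}\tau_2^{-1}$ in the second factor and using the composition rule $(f||\mu)||\nu=f||(\mu\nu)$ from \eqref{tv2.11}, the inserted matrices telescope and the summand is restored. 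Left-invariance $(F^1,F^2)_{\gamma\alpha}=(F^1,F^2)_\alpha$ is immediate from $F^2_{\gamma\alpha\sigma^{-1}\beta^{-1}}=F^2_{\alpha\sigma^{-1}\beta^{-1}}$, and finite support is inherited from $F^1$ and $F^2$.

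The heart of the argument is the twisted covariance
\begin{equation*}
(F^1,F^2)_{\alpha\gamma}=(F^1,F^2)_\alpha||(\tau_1\tau_2\sigma)\gamma(\tau_1\tau_2\sigma)^{-1}\qquad\forall\textrm{ }\gamma\in\Gamma.
\end{equation*}
Here I would make the substitution $\delta:=\beta\sigma\gamma^{-1}\sigma^{-1}$, which runs over $\Gamma\backslash SL_2(\mathbb Z)$ as $\beta$ does (right multiplication by $\sigma\gamma^{-1}\sigma^{-1}\in SL_2(\mathbb Z)$ permutes the cosets) and which satisfies $\alpha\gamma\sigma^{-1}\beta^{-1}=\alpha\sigma^{-1}\delta^{-1}$. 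Writing $\beta=\delta\sigma\gamma\sigma^{-1}$ and applying the covariance \eqref{4.2} of $F^1$ to $F^1_{\delta\sigma\gamma}$, the first factor becomes $F^1_{\delta\sigma}||(\tau_1\sigma\gamma\sigma^{-1}\tau_1^{-1}\tau_2^{-1})$ and the second becomes $F^2_{\alpha\sigma^{-1}\delta^{-1}}||(\tau_2\sigma\delta\sigma\gamma\sigma^{-1}\tau_1^{-1}\tau_2^{-1})$.

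The main obstacle is purely the matrix bookkeeping: I must show each of these two twists equals the corresponding factor of $(F^1,F^2)_\alpha$ followed on the right by $W:=(\tau_1\tau_2\sigma)\gamma(\tau_1\tau_2\sigma)^{-1}$. This is exactly where the hypothesis $\tau_1\tau_2=\tau_2\tau_1$ is indispensable, since it makes $\tau_1^{\pm1}$ and $\tau_2^{\pm1}$ all commute and yields the identities $\tau_2^{-1}\tau_1\tau_2=\tau_1$ and $\tau_1^{-1}\tau_2^{-1}\tau_1\tau_2=1$; using these one checks $\tau_1\sigma\gamma\sigma^{-1}\tau_1^{-1}\tau_2^{-1}=\tau_2^{-1}W$ for the first factor, so that $(F^1_{\delta\sigma}||\tau_2^{-1})||W$ reproduces it, and analogously $\tau_2\sigma\delta\sigma\gamma\sigma^{-1}\tau_1^{-1}\tau_2^{-1}=(\tau_2\sigma\delta\tau_1^{-1}\tau_2^{-1})W$ for the second factor. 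Because $||W$ distributes over the pointwise product (the first relation in \eqref{tv2.11}), the common right twist $W$ factors out of the whole sum, giving $(F^1,F^2)_{\alpha\gamma}=(F^1,F^2)_\alpha||W$ and placing $(F^1,F^2)$ in $\mathcal Q_{\tau_1\tau_2\sigma}(\Gamma)$. Finally, setting $\tau_1=\tau_2=1$ collapses every twist to the identity and reduces \eqref{4.2.35} to \eqref{4.4}, as claimed.
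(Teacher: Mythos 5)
Your proposal is correct and follows essentially the same route as the paper: the same coset-representative check, the same substitution $\delta=\beta\sigma\gamma^{-1}\sigma^{-1}$ together with the covariance of $F^1$, and the same factoring of the common right twist $\tau_1\tau_2\sigma\gamma\sigma^{-1}\tau_1^{-1}\tau_2^{-1}$ out of the product using \eqref{tv2.11}. Your explicit isolation of the identities $\tau_2^{-1}\tau_1\tau_2=\tau_1$ and $\tau_1^{-1}\tau_2^{-1}\tau_1\tau_2=1$ merely makes visible the commutativity bookkeeping that the paper carries out silently inside its displayed computation.
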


\begin{proof} We choose some $\gamma\in \Gamma$. Then, for any $\alpha\in GL_2^+(\mathbb Q)$,  $\beta\in SL_2(\mathbb Z)$,
we have $F^1_{\gamma\beta\sigma}=F^1_{\beta\sigma}$ and:
\begin{equation*}
 (F^2_{\alpha\sigma^{-1}\beta^{-1}
\gamma^{-1}}||\tau_2\sigma\gamma\beta\tau_1^{-1}\tau_2^{-1}) =(F^2_{\alpha\sigma^{-1}\beta^{-1}}||\tau_2\sigma\gamma^{-1}\sigma^{-1}\tau_2^{-1}\tau_2\sigma\gamma\beta 
\tau_1^{-1}\tau_2^{-1})= (F^2_{\alpha\sigma^{-1}\beta^{-1}}||\tau_2\sigma\beta 
\tau_1^{-1}\tau_2^{-1})
\end{equation*} It follows that the sum in \eqref{4.2.35} is well defined, i.e., independent of the choice
of coset representatives of $\Gamma$ in $SL_2(\mathbb Z)$. Additionally, we have:
\begin{equation}\label{4.2.36y}
(F^1,F^2)_{\alpha\gamma} :=\underset{\beta\in \Gamma\backslash SL_2(\mathbb Z)}{\sum} 
(F^1_{\beta\sigma}||\tau_2^{-1})\cdot (F^2_{\alpha\gamma\sigma^{-1}\beta^{-1}}||\tau_2
\sigma\beta 
\tau_1^{-1}\tau_2^{-1})
\end{equation} We now set  $\delta=\beta\sigma\gamma^{-1}
\sigma^{-1}$. Since $F^1\in \mathcal Q_{\tau_1\sigma}(\Gamma)$, 
we know that $F^1_{\delta\sigma\gamma}=F^1_{\delta\sigma}||\tau_1\sigma\gamma\sigma^{-1}\tau_1^{-1}$. Then, we can rewrite the expression in \eqref{4.2.36y} as follows:
\begin{equation}\label{4.2.37}
\begin{array}{ll}
(F^1,F^2)_{\alpha\gamma} &=\underset{\beta\in \Gamma\backslash SL_2(\mathbb Z)}{\sum} 
(F^1_{\delta\sigma\gamma}||\tau_2^{-1})\cdot (F^2_{\alpha\sigma^{-1}\delta^{-1}}||\tau_2\sigma\delta\sigma\gamma\sigma^{-1} 
\tau_1^{-1}\tau_2^{-1}) \\
&=\underset{\beta\in \Gamma\backslash SL_2(\mathbb Z)}{\sum} 
(F^1_{\delta\sigma}||\tau_1\sigma\gamma\sigma^{-1}\tau_1^{-1}\tau_2^{-1})\cdot (F^2_{\alpha\sigma^{-1}\delta^{-1}}||\tau_2\sigma\delta\sigma\gamma\sigma^{-1} 
\tau_1^{-1}\tau_2^{-1}) \\
 &=\left(\underset{\beta\in \Gamma\backslash SL_2(\mathbb Z)}{\sum} 
(F^1_{\delta\sigma}||\tau_2^{-1})\cdot (F^2_{\alpha\sigma^{-1}\delta^{-1}}||\tau_2\sigma\delta\tau_1^{-1}\tau_2^{-1})\right){||} \tau_1\tau_2\sigma\gamma\sigma^{-1} 
\tau_1^{-1}\tau_2^{-1}\\
&=(F^1,F^2)_\alpha || \tau_1\tau_2\sigma\gamma\sigma^{-1} 
\tau_1^{-1}\tau_2^{-1}\\
\end{array}
\end{equation} From \eqref{4.2.37} it follows that $(F^1,F^2)\in \mathcal Q_{\tau_1\tau_2\sigma}(\Gamma)$. 

\end{proof}

\medskip
In particular, it follows from the pairing in \eqref{4.34.2} that for any $m$, $n\in \mathbb Z$, we have a pairing
\begin{equation}\label{4.38.2}
(\_\_,\_\_):\mathcal Q_{\sigma(m)}(\Gamma)\otimes 
\mathcal Q_{\sigma(n)}(\Gamma)\longrightarrow \mathcal Q_{\sigma(m+n)}(\Gamma)
\end{equation} It is clear that \eqref{4.38.2} induces a pairing on $\mathbb Q_\sigma(\Gamma)=\underset{m\in \mathbb Z}{\bigoplus}\mathcal Q_{\sigma(m)}(\Gamma)$ for
each $\sigma\in SL_2(\mathbb Z)$. We will now define operators $\{X_n\}_{n\in \mathbb Z}$ and 
$Z$ on $\mathbb Q_\sigma(\Gamma)$. For each $n\in \mathbb Z$, the operator 
$X_n:\mathbb Q_\sigma(\Gamma)\longrightarrow \mathbb Q_\sigma(\Gamma)$ is induced
by the collection of operators:
\begin{equation}\label{4.2.39}
X_n^m:=X_{\rho_n}:\mathcal Q_{\sigma(m)}(\Gamma)\longrightarrow \mathcal Q_{\sigma(m+n)}(\Gamma)
\qquad \forall\textrm{ }m\in \mathbb Z
\end{equation} where, as mentioned before, $\rho_n=\begin{pmatrix} 1 & n \\ 0 & 1 \\ \end{pmatrix}
$. Then, $X_n:\mathbb Q_\sigma(\Gamma)\longrightarrow 
\mathbb  Q_\sigma(\Gamma)$ is an operator of homogeneous degree $n$ on the graded module
$\mathbb Q_\sigma(\Gamma)$. We also consider:
\begin{equation}\label{4.2.40}
Z:\mathcal Q_{\sigma(m)}(\Gamma)\longrightarrow \mathcal Q_{\sigma(m)}(\Gamma) 
\qquad Z(F)_\alpha:=mF_\alpha +Y(F_\alpha)\qquad \forall\textrm{ }F\in \mathcal Q_{\sigma(m)}(
\Gamma), \alpha\in GL_2^+(\mathbb Q)
\end{equation} This induces an operator $Z:\mathbb Q_\sigma(\Gamma)
\longrightarrow \mathbb Q_\sigma(\Gamma)$ of homogeneous degree $0$ on 
the graded module $\mathbb Q_\sigma(\Gamma)$. We will now show that $\mathbb Q_\sigma(\Gamma)$
is acted upon by a certain Lie algebra $\mathfrak l_{\mathbb Z}$ such that the Lie action incorporates
the operators $\{X_n\}_{n\in \mathbb Z}$ and $Z$ mentioned above. We define $\mathfrak l_{\mathbb Z}$
to be the Lie algebra with generators $\{Z,X_n|n\in \mathbb Z\}$ satisfying the following commutator
relations:
\begin{equation}\label{4.2.41}
[Z,X_n]=(n+1)X_n \qquad [X_n,X_{n'}]=0 \qquad \forall\textrm{ }n,n'\in \mathbb Z
\end{equation} In particular, we note that $[Z,X_0]=X_0$. It follows that the Lie algebra 
$\mathfrak l_{\mathbb Z}$ contains the Lie algebra $\mathfrak l_1$ defined in \eqref{3.59uu}. We now
describe the action of $\mathfrak l_{\mathbb Z}$ on $\mathbb Q_\sigma(\Gamma)$. 

\medskip
\begin{thm} Let $\Gamma=\Gamma(N)$ be a principal congruence subgroup of $SL_2(\mathbb Z)$
and let $\sigma\in SL_2(\mathbb Z)$. Then, the Lie algebra $\mathfrak l_{\mathbb Z}$ has a Lie action
on $\mathbb Q_\sigma(\Gamma)$.
\end{thm}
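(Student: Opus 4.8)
The plan is to verify directly that the operators $Z$ and $\{X_n\}_{n\in\mathbb Z}$ introduced in \eqref{4.2.39} and \eqref{4.2.40} are well-defined endomorphisms of the graded module $\mathbb Q_\sigma(\Gamma)$ and then to check the two families of commutator relations in \eqref{4.2.41}; since $\mathfrak l_{\mathbb Z}$ is defined purely by these relations, this suffices to give the Lie action. For well-definedness I would first observe that each $\sigma(m)=\rho_m\sigma$ lies in $SL_2(\mathbb Z)$, so every graded piece $\mathcal Q_{\sigma(m)}(\Gamma)$ is a twisted quasimodular Hecke module of exactly the kind treated in Section 4.1. Hence the argument of Proposition \ref{Prop4.3}(a) shows that $Y$ restricts to an endomorphism of $\mathcal Q_{\sigma(m)}(\Gamma)$, and therefore $Z(F)_\alpha=mF_\alpha+Y(F_\alpha)$ is an endomorphism of $\mathcal Q_{\sigma(m)}(\Gamma)$, homogeneous of degree $0$. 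Likewise Proposition \ref{Prop4.6}(a) applied to $\tau=\rho_n$ carries $\mathcal Q_{\sigma(m)}(\Gamma)$ into $\mathcal Q_{\rho_n\sigma(m)}(\Gamma)=\mathcal Q_{\sigma(m+n)}(\Gamma)$, so $X_n=X_{\rho_n}$ is homogeneous of degree $n$.

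The relation $[X_n,X_{n'}]=0$ is then immediate: since $\rho_n\rho_{n'}=\rho_{n+n'}=\rho_{n'}\rho_n$, the unipotent matrices $\rho_n$ and $\rho_{n'}$ commute, and Proposition \ref{Prop4.6}(b) yields $[X_{\rho_n},X_{\rho_{n'}}]=0$ between the appropriate graded pieces.

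The heart of the proof is the relation $[Z,X_n]=(n+1)X_n$, which I would establish on a homogeneous element $F\in\mathcal Q_{\sigma(m)}(\Gamma)$ by comparing $ZX_n$ and $X_nZ$ directly. Because $X_nF\in\mathcal Q_{\sigma(m+n)}(\Gamma)$, the operator $Z$ acts on it with scalar $m+n$, giving
\begin{equation}
(ZX_nF)_\alpha=(m+n)\,X(F_\alpha)||\rho_n^{-1}+Y\bigl(X(F_\alpha)||\rho_n^{-1}\bigr),
\end{equation}
whereas applying $Z$ first (with scalar $m$) and using that $X$ is a derivation yields
\begin{equation}
(X_nZF)_\alpha=m\,X(F_\alpha)||\rho_n^{-1}+X(Y(F_\alpha))||\rho_n^{-1}.
\end{equation}
Subtracting, the degree scalars contribute $n\,X(F_\alpha)||\rho_n^{-1}$. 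For the remaining terms I would invoke the identity $Y(f||\alpha)=Y(f)||\alpha$ from Lemma \ref{L3.2.1} to move $Y$ across $||\rho_n^{-1}$, and then the relation $[Y,X]=X$ on the quasimodular tower $\mathcal{QM}$ (established in the proof of Proposition \ref{P3.10}), so that $Y(X(F_\alpha))||\rho_n^{-1}-X(Y(F_\alpha))||\rho_n^{-1}=X(F_\alpha)||\rho_n^{-1}$. Combining gives $([Z,X_n]F)_\alpha=(n+1)\,X(F_\alpha)||\rho_n^{-1}=(n+1)(X_nF)_\alpha$, as required.

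I expect the only genuine subtlety to be the bookkeeping of the grading: the factor $(n+1)$ arises from two separate sources that must be kept apart — the shift of the degree index $m\mapsto m+n$ caused by $X_n$, which accounts for the $n$, and the intrinsic commutator $[Y,X]=X$ coming from the $Y$-part of $Z$, which accounts for the $1$. Getting this right hinges on correctly carrying the degree-dependent scalar in $Z$ through the degree shift and on confirming, via Lemma \ref{L3.2.1}, that $Y$ really does commute with the slash action $||\rho_n^{-1}$. Once these points are settled the two commutator families hold, and the Lie action of $\mathfrak l_{\mathbb Z}$ on $\mathbb Q_\sigma(\Gamma)$ follows.
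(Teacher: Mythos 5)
Your proof is correct and follows essentially the same route as the paper: $[X_n,X_{n'}]=0$ is quoted from Proposition \ref{Prop4.6}(b), and $[Z,X_n]=(n+1)X_n$ is verified on each graded piece $\mathcal Q_{\sigma(m)}(\Gamma)$ by exactly the computation in \eqref{4.2.42}, with the $n$ coming from the degree shift and the $1$ from $[Y,X]=X$. Your explicit appeal to $Y(f||\alpha)=Y(f)||\alpha$ to move $Y$ across $||\rho_n^{-1}$ makes precise a step the paper leaves implicit (note only that the expansion of $X(Z(F)_\alpha)$ uses linearity of $X$, not the Leibniz rule).
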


\begin{proof} We need to check that $[Z,X_n]=(n+1)X_n$ and $[X_n,X_{n'}]=0$, $\forall$ $n$, $n'\in \mathbb Z$
for the operators $\{Z, X_n | n\in\mathbb Z\}$ on $\mathbb Q_\sigma(\Gamma)$. From part (b) of
Proposition \ref{Prop4.6}, we know that $[X_n,X_{n'}]=0$. From \eqref{4.2.39} and \eqref{4.2.40}, it is clear that in order to show that $[Z,X_n]=(n+1)X_n$, we need to check that
$[Z,X_n^m]=(n+1)X_n^m:\mathcal Q_{\sigma(m)}(\Gamma)\longrightarrow 
\mathcal Q_{\sigma(m+n)}(\Gamma)$ for any given $m\in \mathbb Z$. For any $F\in \mathcal Q_{\sigma(m)}(\Gamma)$ and any $\alpha\in GL_2^+(\mathbb Q)$, we now check that:
\begin{equation}\label{4.2.42}
\begin{array}{c}
(ZX_n^m(F))_\alpha=(n+m)X_n^m(F)_\alpha+Y(X_n^m(F)_\alpha)=
(n+m)X(F_\alpha)||\rho_n^{-1}+YX(F_\alpha)||\rho_n^{-1}\\\
(X_n^mZ(F))_\alpha = X(Z(F)_\alpha)||\rho_n^{-1}= mX(F_\alpha)||\rho_n^{-1}+XY(F_\alpha)||\rho_n^{-1} \\
\end{array}
\end{equation} Combining \eqref{4.2.42} with the fact that $[Y,X]=X$, it follows that
$[Z,X_n^m]=(n+1)X_n^m$ for each $m\in \mathbb Z$. Hence, the result follows. 
\end{proof}

\medskip
We now consider the Hopf algebra $\mathfrak h_{\mathbb Z}$ that is the universal enveloping algebra
of the Lie algebra $\mathfrak l_{\mathbb Z}$. Accordingly, the coproduct $\Delta$ on $\mathfrak h_{\mathbb Z}$ 
is given by:
\begin{equation}\label{4.2.43}
\Delta(X_n)=X_n\otimes 1+1\otimes X_n \qquad \Delta(Z)=Z\otimes 1+1\otimes Z\qquad
\forall\textrm{ }n\in \mathbb Z
\end{equation} It is clear that $\mathfrak h_{\mathbb Z}$ contains the Hopf algebra $\mathfrak h_1$, the
universal enveloping algebra of $\mathfrak l_1$. From Proposition \ref{Prop4.3}, we know that
$\mathfrak h_1$ has a Hopf action on the pairing on $\mathcal Q_\sigma(\Gamma)$. We want to show
that $\mathfrak h_{\mathbb Z}$ has a Hopf action on the pairing on $\mathbb Q_\sigma(\Gamma)$. For this, we prove the following Lemma.

\medskip
\begin{lem}\label{lem49x} Let $\Gamma=\Gamma(N)$ be a principal congruence subgroup of $SL_2(\mathbb Z)$ and let $\sigma
\in SL_2(\mathbb Z)$. Let $\tau_1$, $\tau_2$,
$\tau_3\in SL_2(\mathbb Z)$ be three matrices such that $\tau_i\tau_j=\tau_j\tau_i$, 
$\forall$ $i$, $j\in \{1,2,3\}$. Then, for any  $F^1\in \mathcal Q_{\tau_1\sigma}
(\Gamma)$, $F^2\in \mathcal Q_{\tau_2\sigma}(\Gamma)$, we have:
\begin{equation}\label{4.2.44}
X_{\tau_3}(F^1,F^2)=(X_{\tau_3}(F^1),F^2)+ (F^1,X_{\tau_3}(F^2))
\end{equation}

\end{lem}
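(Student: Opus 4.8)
The statement asserts that $X_{\tau_3}$ is a derivation with respect to the pairing \eqref{4.2.35}, and the plan is to verify this by a direct expansion of both sides. First I would check that all three terms are well defined: since $\tau_1,\tau_2,\tau_3$ commute pairwise, $\tau_3\tau_1$ commutes with $\tau_2$ and $\tau_1$ commutes with $\tau_3\tau_2$, so both $(X_{\tau_3}(F^1),F^2)$ and $(F^1,X_{\tau_3}(F^2))$ are legitimate instances of \eqref{4.2.35}. Moreover $(F^1,F^2)\in\mathcal Q_{\tau_1\tau_2\sigma}(\Gamma)$, so $X_{\tau_3}$ applies to it via \eqref{4.2.31}, and all three quantities land in $\mathcal Q_{\tau_3\tau_1\tau_2\sigma}(\Gamma)$. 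Using \eqref{4.2.31} I would write $X_{\tau_3}(F^1,F^2)_\alpha=X\big((F^1,F^2)_\alpha\big)||\tau_3^{-1}$, insert \eqref{4.2.35} for $(F^1,F^2)_\alpha$, and apply the fact (recorded just after \eqref{3.52}) that $X$ is a derivation on $\mathcal{QM}$ in order to split $X$ across the product in each summand.

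The crucial point is how $X$ interacts with the operation $||$. By Lemma \ref{L3.2.1} we have $X(f)||\delta=X(f||\delta)+(\mu_{\delta^{-1}}\cdot Y(f))||\delta$, while $\mu_\delta=0$ for every $\delta\in SL_2(\mathbb Z)$ (the remark after \eqref{3.2.57X}); hence $X$ commutes with $||\delta$ whenever $\delta\in SL_2(\mathbb Z)$. Every matrix occurring in a $||$ inside \eqref{4.2.35}, namely $\tau_2^{-1}$ and $\tau_2\sigma\beta\tau_1^{-1}\tau_2^{-1}$, lies in $SL_2(\mathbb Z)$, as does $\tau_3^{-1}$. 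Therefore I may move $X$ through each $||$, and then, using that $||$ is a right action with $f||(\delta\delta')=(f||\delta)||\delta'$ and is compatible with products (from \eqref{tv2.11}), post-compose with $||\tau_3^{-1}$. This produces exactly two families of summands, the first being
\begin{equation*}
\sum_\beta \big(X(F^1_{\beta\sigma})||\tau_2^{-1}\tau_3^{-1}\big)\cdot\big(F^2_{\alpha\sigma^{-1}\beta^{-1}}||\tau_2\sigma\beta\tau_1^{-1}\tau_2^{-1}\tau_3^{-1}\big)
\end{equation*}
together with the symmetric family in which $X$ lands on the $F^2$-factor while the $F^1$-factor reads $F^1_{\beta\sigma}||\tau_2^{-1}\tau_3^{-1}$.

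It then remains to match these two families with the right-hand side of \eqref{4.2.44}. For the first family I would expand $(X_{\tau_3}(F^1),F^2)$ by \eqref{4.2.35} with $\tau_1$ replaced by $\tau_3\tau_1$ (and $\tau_2$ unchanged), substitute $(X_{\tau_3}(F^1))_{\beta\sigma}=X(F^1_{\beta\sigma})||\tau_3^{-1}$, and simplify the $||$-exponents using $(\tau_3\tau_1)^{-1}=\tau_1^{-1}\tau_3^{-1}$; this reproduces the displayed family provided $\tau_2^{-1}\tau_3^{-1}=\tau_3^{-1}\tau_2^{-1}$. For the second family I would expand $(F^1,X_{\tau_3}(F^2))$ by \eqref{4.2.35} with $\tau_2$ replaced by $\tau_3\tau_2$, use $(\tau_3\tau_2)^{-1}=\tau_2^{-1}\tau_3^{-1}$ together with $(X_{\tau_3}(F^2))_{\alpha\sigma^{-1}\beta^{-1}}=X(F^2_{\alpha\sigma^{-1}\beta^{-1}})||\tau_3^{-1}$, and observe that the cancellation $\tau_3^{-1}\tau_3=1$ returns precisely the symmetric family above. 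The main obstacle is purely the bookkeeping of the matrix words appearing in the $||$-exponents: the pairwise commutativity of $\tau_1,\tau_2,\tau_3$ is exactly what is needed to rewrite $\tau_2^{-1}\tau_3^{-1}$ as $\tau_3^{-1}\tau_2^{-1}$ and to reorganize words such as $\tau_2\sigma\beta\tau_1^{-1}\tau_3^{-1}\tau_2^{-1}$ so that the two substituted copies of \eqref{4.2.35} coincide termwise with the two derivation families, after which \eqref{4.2.44} follows immediately.
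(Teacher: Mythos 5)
Your proposal is correct and follows essentially the same route as the paper's proof: expand $X_{\tau_3}(F^1,F^2)_\alpha$ via the derivation property of $X$ on $\mathcal{QM}$ and the fact that $X$ commutes with $||\delta$ for $\delta\in SL_2(\mathbb Z)$, then expand $(X_{\tau_3}(F^1),F^2)$ and $(F^1,X_{\tau_3}(F^2))$ from the pairing formula with $\tau_1$ (resp.\ $\tau_2$) replaced by $\tau_3\tau_1$ (resp.\ $\tau_3\tau_2$), and match the two families of summands using the pairwise commutativity to reorder the matrix words. Your additional remark on why the right-hand side pairings are well defined is a correct (and in the paper only implicit) point.
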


\begin{proof} Consider any $\alpha\in GL_2^+(\mathbb Q)$. Then, from the definition of
$X_{\tau_3}$, it follows that 
\begin{equation}\label{4.2.45}
\begin{array}{l}
X_{\tau_3}(F^1, F^2)_\alpha = \underset{\beta\in \Gamma\backslash SL_2(\mathbb Z)}{\sum} 
X((F^1_{\beta\sigma}||\tau_2^{-1})\cdot (F^2_{\alpha\sigma^{-1}\beta^{-1}}||\tau_2
\sigma\beta 
\tau_1^{-1}\tau_2^{-1}))||\tau_3^{-1} \\
=  \underset{\beta\in \Gamma\backslash SL_2(\mathbb Z)}{\sum} 
(X(F^1_{\beta\sigma})||\tau_2^{-1}\tau_3^{-1})\cdot  (F^2_{\alpha\sigma^{-1}\beta^{-1}}||\tau_2\sigma\beta 
\tau_1^{-1}\tau_2^{-1}\tau_3^{-1}) \\ \quad \quad + \underset{\beta\in \Gamma\backslash SL_2(\mathbb Z)}{\sum} 
(F^1_{\beta\sigma}||\tau_2^{-1}\tau_3^{-1})\cdot  (X(F^2_{\alpha\sigma^{-1}\beta^{-1}})||\tau_2\sigma\beta 
\tau_1^{-1}\tau_2^{-1}\tau_3^{-1})  \\
\end{array}
\end{equation} Since $F^1\in \mathcal Q_{\tau_1\sigma}(\Gamma)$, it follows that
$X_{\tau_3}(F^1)\in \mathcal Q_{\tau_1\tau_3\sigma}(\Gamma)$. Similarly, we see that 
$X_{\tau_3}(F^2)\in \mathcal Q_{\tau_2\tau_3\sigma}(\Gamma)$. It follows that: 
\begin{equation}\label{4.2.46}
\begin{array}{ll}
(X_{\tau_3}(F^1),F^2)_\alpha &=   \underset{\beta\in \Gamma\backslash SL_2(\mathbb Z)}{\sum} 
(X_{\tau_3}(F^1)_{\beta\sigma}||\tau_2^{-1})\cdot (F^2_{\alpha\sigma^{-1}\beta^{-1}}||\tau_2\sigma\beta 
\tau_1^{-1}\tau_2^{-1}\tau_3^{-1})\\ 
& =  \underset{\beta\in \Gamma\backslash SL_2(\mathbb Z)}{\sum} 
(X(F^1_{\beta\sigma})||\tau_2^{-1}\tau_3^{-1})\cdot (F^2_{\alpha\sigma^{-1}\beta^{-1}}||\tau_2\sigma\beta 
\tau_1^{-1}\tau_2^{-1}\tau_3^{-1})\\ 
(F^1,X_{\tau_3}(F^2))_\alpha & =  \underset{\beta\in \Gamma\backslash SL_2(\mathbb Z)}{\sum} 
(F^1_{\beta\sigma}||\tau_2^{-1}\tau_3^{-1})\cdot  (X_{\tau_3}(F^2)_{\alpha\sigma^{-1}\beta^{-1}}||\tau_2\tau_3\sigma\beta 
\tau_1^{-1}\tau_2^{-1}\tau_3^{-1})  \\
& =  \underset{\beta\in \Gamma\backslash SL_2(\mathbb Z)}{\sum} 
(F^1_{\beta\sigma}||\tau_2^{-1}\tau_3^{-1})\cdot  (X(F^2_{\alpha\sigma^{-1}\beta^{-1}})||\tau_3^{-1}\tau_2\tau_3\sigma\beta 
\tau_1^{-1}\tau_2^{-1}\tau_3^{-1})  \\
& =  \underset{\beta\in \Gamma\backslash SL_2(\mathbb Z)}{\sum} 
(F^1_{\beta\sigma}||\tau_2^{-1}\tau_3^{-1})\cdot  (X(F^2_{\alpha\sigma^{-1}\beta^{-1}})||\tau_2\sigma\beta 
\tau_1^{-1}\tau_2^{-1}\tau_3^{-1})  \\
\end{array} 
\end{equation} Comparing \eqref{4.2.45} and  \eqref{4.2.46}, the result of 
\eqref{4.2.44} follows. 

\end{proof}

\medskip
As a special case of Lemma \ref{lem49x}, it follows that for any
$F^1\in \mathcal Q_{\sigma(m)}(\Gamma)$ and $F^2\in \mathcal Q_{\sigma(m')}(\Gamma)$, 
we have:
\begin{equation}\label{4.2.47}
X_{\rho_n}(F^1,F^2)=X_n(F^1,F^2)=(X_n(F^1),F^2)+(F^1,X_n(F^2))\qquad \forall \textrm{ }n\in \mathbb Z
\end{equation}
We conclude by showing that $\mathfrak h_{\mathbb Z}$ has a Hopf action
on the pairing on $\mathbb Q_\sigma(\Gamma)$. 

\medskip
\begin{thm} Let $\Gamma=\Gamma(N)$ be a principal congruence subgroup of $SL_2(\mathbb Z)$ and let $\sigma
\in SL_2(\mathbb Z)$.  Then, the Hopf algebra 
$\mathfrak h_{\mathbb Z}$ has a Hopf action on the pairing on $\mathbb Q_\sigma(\Gamma)$. 
In other words, for $F^1$, $F^2\in \mathbb Q_{\sigma}(\Gamma)$, we have
\begin{equation}\label{4.2.48}
h(F^1,F^2)=\sum (h_{(1)}(F^1),h_{(2)}(F^2))
\end{equation} where the coproduct $\Delta:\mathfrak h_{\mathbb Z}
\longrightarrow \mathfrak h_{\mathbb Z}\otimes \mathfrak h_{\mathbb Z}$ is defined
by setting $\Delta(h):=\sum h_{(1)}\otimes h_{(2)}$ for each $h\in \mathfrak h_{\mathbb Z}$. 

\end{thm}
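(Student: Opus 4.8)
The plan is to exploit that $\mathfrak h_{\mathbb Z}=\mathcal U(\mathfrak l_{\mathbb Z})$ is generated as an algebra by the primitive generators $Z$ and $\{X_n\}_{n\in\mathbb Z}$, together with the fact that the elements of $\mathfrak h_{\mathbb Z}$ satisfying the Hopf-action relation \eqref{4.2.48} form a subalgebra. Concretely, if $h$ and $h'$ both satisfy \eqref{4.2.48}, then since $\Delta$ is an algebra homomorphism we have $\Delta(hh')=\sum h_{(1)}h'_{(1)}\otimes h_{(2)}h'_{(2)}$, whence
$(hh')(F^1,F^2)=h\big(h'(F^1,F^2)\big)=\sum h\big(h'_{(1)}(F^1),h'_{(2)}(F^2)\big)=\sum\big(h_{(1)}h'_{(1)}(F^1),h_{(2)}h'_{(2)}(F^2)\big)$,
which is exactly $\sum\big((hh')_{(1)}(F^1),(hh')_{(2)}(F^2)\big)$. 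Thus it suffices to verify \eqref{4.2.48} on the generators $Z$ and each $X_n$, and because both are primitive (see \eqref{4.2.43}), in each case the relation reduces to the assertion that the operator is a derivation with respect to the pairing.

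For the generators $X_n$ there is nothing further to prove: the required identity $X_n(F^1,F^2)=(X_n(F^1),F^2)+(F^1,X_n(F^2))$ is precisely \eqref{4.2.47}, obtained as the special case $\tau_3=\rho_n$ of Lemma \ref{lem49x}. So the whole content lies in the generator $Z$, i.e.\ in showing $Z(F^1,F^2)=(Z(F^1),F^2)+(F^1,Z(F^2))$ for $F^1\in\mathcal Q_{\sigma(m)}(\Gamma)$ and $F^2\in\mathcal Q_{\sigma(m')}(\Gamma)$.

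First I would record that $Y$ is itself a derivation for the graded pairing \eqref{4.2.35}. Since $Y$ is a derivation on $\mathcal{QM}$ and commutes with the $\|$-action in the sense $Y(f\|\delta)=Y(f)\|\delta$ for all $\delta\in GL_2^+(\mathbb Q)$ (Lemma \ref{L3.2.1}), applying $Y$ termwise to the defining sum \eqref{4.2.35} and distributing over the product yields $Y(F^1,F^2)=(Y(F^1),F^2)+(F^1,Y(F^2))$ directly. Next, recalling from \eqref{4.2.40} that $Z(F)_\alpha=(\deg F)\,F_\alpha+Y(F_\alpha)$ and that $(F^1,F^2)\in\mathcal Q_{\sigma(m+m')}(\Gamma)$ has degree $m+m'$, I would expand $Z(F^1,F^2)_\alpha=(m+m')(F^1,F^2)_\alpha+Y\big((F^1,F^2)_\alpha\big)$ and compare with $(Z(F^1),F^2)_\alpha+(F^1,Z(F^2))_\alpha$. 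Substituting $Z(F^1)_{\beta\sigma}=m\,F^1_{\beta\sigma}+Y(F^1_{\beta\sigma})$ (and the analogue for $F^2$) into \eqref{4.2.35}, the scalar parts contribute $m(F^1,F^2)_\alpha$ and $m'(F^1,F^2)_\alpha$, summing to $(m+m')(F^1,F^2)_\alpha$, while the two $Y$-parts reassemble, via the derivation property just established, into $Y\big((F^1,F^2)_\alpha\big)$. This gives the identity for $Z$ and completes the verification.

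The main obstacle is the bookkeeping for $Z$: one must correctly match the scalar degree-shifts $m$ and $m'$ coming from the two factors against the total degree $m+m'$ of the pairing output, and confirm that $Y$ remains a derivation on the full graded pairing \eqref{4.2.35}, not merely on the untwisted pairing \eqref{4.4} treated in Proposition \ref{Prop4.3}. Both checks are mechanical once the degree convention \eqref{4.2.40} and the compatibility $Y(f\|\delta)=Y(f)\|\delta$ are in hand, and no genuinely new phenomenon arises beyond those already encountered in Lemma \ref{lem49x}.
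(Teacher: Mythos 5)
Your proposal is correct and follows essentially the same route as the paper: reduce to the primitive generators $Z$ and $X_n$, dispose of the $X_n$ via \eqref{4.2.47} (Lemma \ref{lem49x}), and verify the derivation property for $Z$ by splitting $Z(F)_\alpha=(\deg F)F_\alpha+Y(F_\alpha)$, matching the scalar parts $m+m'$ and using that $Y$ is a derivation on $\mathcal{QM}$ commuting with $\|$. The only addition is your explicit subalgebra argument justifying the reduction to generators, which the paper leaves implicit.
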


\begin{proof} It suffices to prove the result in the case where $F^1\in \mathcal Q_{\sigma(m)}(\Gamma)$, 
$F^2\in \mathcal Q_{\sigma(m')}(\Gamma)$ for some $m$, $m'\in \mathbb Z$. Further, it suffices
to prove the relation \eqref{4.2.48} for the generators  $\{Z,X_n|n\in \mathbb Z\}$ of the Hopf algebra
$\mathfrak{h}_{\mathbb Z}$. For the generators $X_n$, $n\in \mathbb Z$, this is already the result of
\eqref{4.2.47} which follows from Lemma \ref{lem49x}. Since  $\Delta(Z)=Z\otimes 1+1\otimes Z$, it remains
to show that \begin{equation}
Z(F^1,F^2)=(Z(F^1),F^2)+(F^1,Z(F^2)) \qquad \forall\textrm{ }F^1\in \mathcal Q_{\sigma(m)}(\Gamma),
F^2\in \mathcal Q_{\sigma(m')}(\Gamma)
\end{equation} By the definition of the pairing on $\mathbb Q_\sigma(\Gamma)$, we know that   $(F^1,F^2)\in 
\mathcal Q_{\sigma(m+m')}(\Gamma)$. Then, for any $\alpha\in GL_2^+(\mathbb Q)$, we have:
\begin{equation}
\begin{array}{ll}
Z(F^1,F^2)_\alpha & = (m+m')(F^1,F^2)_\alpha + Y(F^1,F^2)_\alpha \\
& =  (m+m') \underset{\beta\in \Gamma\backslash SL_2(\mathbb Z)}{\sum}  
((F^1_{\beta\sigma}||\rho_{m'}^{-1})\cdot (F^2_{\alpha\sigma^{-1}\beta^{-1}}||
\rho_{m'}\sigma\beta\rho_{m}^{-1}\rho_{m'}^{-1})) \\ & \quad +  \underset{\beta\in \Gamma\backslash SL_2(\mathbb Z)}{\sum}  
Y((F^1_{\beta\sigma}||\rho_{m'}^{-1})\cdot (F^2_{\alpha\sigma^{-1}\beta^{-1}}||
\rho_{m'}\sigma\beta\rho_{m}^{-1}\rho_{m'}^{-1}))\\
& = \underset{\beta\in \Gamma\backslash SL_2(\mathbb Z)}{\sum}   
((mF^1_{\beta\sigma}+Y(F^1_{\beta\sigma}))||\rho_{m'}^{-1})\cdot  (F^2_{\alpha\sigma^{-1}\beta^{-1}}||
\rho_{m'}\sigma\beta\rho_{m}^{-1}\rho_{m'}^{-1}) \\
&\quad + \underset{\beta\in \Gamma\backslash SL_2(\mathbb Z)}{\sum}   
(F^1_{\beta\sigma}||\rho_{m'}^{-1})\cdot ((m'F^2_{\alpha\sigma^{-1}\beta^{-1}}
+Y(F^2_{\alpha\sigma^{-1}\beta^{-1}}))||\rho_{m'}\sigma\beta\rho_{m}^{-1}\rho_{m'}^{-1})\\
& = \underset{\beta\in \Gamma\backslash SL_2(\mathbb Z)}{\sum}   
(Z(F^1)_{\beta\sigma}||\rho_{m'}^{-1})\cdot  (F^2_{\alpha\sigma^{-1}\beta^{-1}}||
\rho_{m'}\sigma\beta\rho_{m}^{-1}\rho_{m'}^{-1}) \\ 
&\quad + \underset{\beta\in \Gamma\backslash SL_2(\mathbb Z)}{\sum}   
(F^1_{\beta\sigma}||\rho_{m'}^{-1})\cdot (Z(F^2)_{\alpha\sigma^{-1}\beta^{-1}}||\rho_{m'}\sigma\beta\rho_{m}^{-1}\rho_{m'}^{-1})\\
&= (Z(F^1),F^2)_\alpha+(F^1,Z(F^2))_\alpha \\
\end{array} 
\end{equation}

\end{proof}

\medskip

\medskip

\end{document}